\newtheorem{satz}{Theorem}
\newtheorem{proposition}[satz]{Proposition}
\newtheorem{theorem}[satz]{Theorem}
\newtheorem{lemma}[satz]{Lemma}
\newtheorem{definition}[satz]{Definition}
\newtheorem{corollary}[satz]{Corollary}
\newtheorem{remark}[satz]{Remark}
\newtheorem{exc}[satz]{Exercise}
\def\no{\noindent}
\def\eps{\varepsilon}
\def\_phi{\varphi}
\def\a{\alpha}
\def\v{\vec}
\def\F{{\mathbb F}}
\def\m{\times}
\def\t{\tilde}
\def\ov{\overline}
\def\C{{\mathbb C}}
\def\R{{\mathbb R}}
\def\E{\mathsf {E}}
\def\T{{\mathbb T}}
\def\Z_N{{\mathbb Z}_N}
\def\Z{{\mathbb Z}}
\def\N{{\mathbb N}}
\def\Gr{{\mathbf G}}
\def\D{{\mathbb D}}
\def\Spec{{\rm Spec\,}}
\def\oM{{\rm M}}
\def\oT{{\rm T}}
\def\supp{{\rm supp\,}}
\def\tr{{\rm tr\,}}
\def\G{\Gamma}
\def\FF{\widehat}
\def\c{\circ}
\def\D{\Delta}
\def\Cf{{\mathcal C}}
\def\gs{\geqslant}
\def\T{\mathsf {T}}
\author{Shkredov I.D.}
\title{ An introduction to higher
energies
and sumsets
}
\date{}
\begin{document}
\maketitle

\begin{center}
 Annotation.
\end{center}

{\it \small
     These notes basically contain a material of two mini--courses  which were read in G\"{o}teborg in April 2015 during the author visit of Chalmers \& G\"{o}teborg universities and in Beijing in November 2015 during "Chinese--Russian Workshop on Exponential Sums and Sumsets".
     The
     article
     is a short introduction to a new area of Additive Combinatorics which is connected which so--called the higher sumsets as well as with the higher energies.
     We hope the notes will be helpful for a reader who is interested in the field.
}
\\

\section{Introduction}
\label{sec:introduction}

Let $\Gr = (\Gr, +)$ be a group with the group operation $+$.
By letters $A,B,C, \dots$ we will denote arbitrary subsets of the group $\Gr$.
Define the sumset and, similarly, the difference  set of two sets $A,B$ as
$$
    A+B := \{ a+b ~:~ a\in A,\, b\in B\} \quad A-B := \{ a-b ~:~ a\in A,\, b\in B\} \,.
$$
Of course, one can iterate the sumsets/difference sets, obtaining sums $A_1+A_2+\dots+A_k$ and so on.
If $A_1=\dots A_k = A$ then we write $kA$ for the sumset of $k$ sets $A$.
The subject of Additive Combinatorics is any combinatorics which can be expressed with the help of the group operation $+$.
Typical questions in the field are finding different connections between the sizes of sumsets, different sets, cardinalities of  its iterations and so on.

In the notes we investigate  higher sumsets and generalized convolutions, which are closely connected with the new object.
Also we study higher moments of these convolutions (higher energies),
which generalize a classical notion of the additive energy.
Such quantities appear in many problems of Additive Combinatorics as well as in Number Theory. In our investigation
we use different approaches including basic combinatorics, Fourier
analysis and the eigenvalues method to establish basic properties
of the higher energies. Also we provide a sequence of applications of
the higher energies to Additive Combinatorics and Number Theory.

The first part of the notes (section 3)  is  based on paper \cite{SS1} and partially on \cite{SV}, the second one uses \cite{SS3} and partially \cite{SS2}, \cite{RR-NS}.
The last part has roots in paper \cite{s}, where nevertheless a "dual"\, Fourier notation was used.
After that the point of view was developed in articles \cite{SS1}, \cite{s_ineq}, \cite{s_mixed} and others.
As for applications sections \ref{sec:lecture2}, \ref{sec:lecture3} contain modern estimate for the size of the difference sets of convex sets and a new upper bound for Heilbronn's exponential sum. Also we discuss a structural $\E_2, \E_3$ result and the best upper bound for the additive energy of a multiplicative subgroup in the last section.
Finally, the notes contain some instructive exercises.


\section{Notation}
\label{sec:definitions}


By $\Gr = (\Gr, +)$ we denote a group with the group operation $+$.
For a positive integer $n,$ we set $[n]=\{1,\ldots,n\}.$
All logarithms are base $2.$ Signs $\ll$ and $\gg$ are the usual Vinogradov's symbols.
With a slight abuse of notation we use the same letter to denote a set $S\subseteq \Gr$
and its characteristic function $S:\Gr\rightarrow \{0,1\}$,
in other words $S(x) = 1$, $x\in S$ and $S(x)=0$ otherwise.

Let $f,g : \Gr \to \C$ be two functions.
Put
\begin{equation}\label{f:convolutions}
    (f*g) (x) := \sum_{y\in \Gr} f(y) g(x-y) \quad \mbox{ and } \quad
        (f\circ g) (x) := \sum_{y\in \Gr} f(y) g(y+x)
\end{equation}
 Clearly,  $(f*g) (x) = (g*f) (x)$ and $(f\c g)(x) = (g \c f) (-x)$, $x\in \Gr$.
 The $k$--fold convolution, $k\in \N$  we denote by $*_k$,
 so $*_k := *(*_{k-1})$.
Put
$\E^{+}(A,B)$ for the {\it additive energy} of two sets $A,B \subseteq \Gr$
(see e.g. \cite{tv}), that is
$$
    \E^{+} (A,B) = |\{ a_1+b_1 = a_2+b_2 ~:~ a_1,a_2 \in A,\, b_1,b_2 \in B \}| \,.
$$
If $A=B$ then we simply write $\E^{+} (A)$ instead of $\E^{+} (A,A).$
Clearly,
\begin{equation*}\label{f:energy_convolution}
    \E^{+} (A,B) = \sum_x (A*B) (x)^2 = \sum_x (A \circ B) (x)^2 = \sum_x (A \circ A) (x) (B \circ B) (x)
    \,.
\end{equation*}
Sumsets and energies are connected by the Cauchy--Schwarz inequality
\begin{equation}\label{f:E_sumsets_CS}
    |A|^2 |B|^2 \le \E^{+} (A,B) |A\pm B| \,.
\end{equation}
Note also that
\begin{equation}\label{f:E_CS}
    \E^{+} (A,B) \le \min \{ |A|^2 |B|, |B|^2 |A|, |A|^{3/2} |B|^{3/2} \} \,.
\end{equation}
In the same way define the {\it multiplicative energy} of two sets $A,B \subseteq \Gr$
$$
    \E^{\times} (A,B) = |\{ a_1 b_1 = a_2 b_2 ~:~ a_1,a_2 \in A,\, b_1,b_2 \in B \}| \,.
$$
Certainly, multiplicative energy $\E^{\times} (A,B)$ can be expressed in terms of multiplicative convolutions,
similar to (\ref{f:convolutions}).
Usually we will use the additive energy and write $\E(A,B)$ instead of $\E^{+} (A,B)$.
Sometimes we
put
$\E(f,g) = \sum_x (f\c f) (x) (g\c g) (x)$ for two arbitrary functions $f,g : \Gr \to \C$.

\bigskip

In the lecture notes we will use Fourier analysis, although it is not main topic of our course.
Nevertheless,
let us recall the required definitions, for more details see \cite{Rudin_book}.

Let $\Gr$ be an abelian group.
If $\Gr$ is finite then denote by $N$ the cardinality of $\Gr$.
It is well--known~\cite{Rudin_book} that the dual group $\FF{\Gr}$ is isomorphic to $\Gr$ in the case.
Let $f$ be a function from $\Gr$ to $\mathbb{C}.$  We denote the Fourier transform of $f$ by~$\FF{f},$
\begin{equation}\label{F:Fourier}
  \FF{f}(\xi) =  \sum_{x \in \Gr} f(x) e( -\xi \cdot x) \,,
\end{equation}
where $e(x) = e^{2\pi i x}$
and $\xi$ is a homomorphism from $\FF{\Gr}$ to $\R/\Z$ acting as $\xi : x \to \xi \cdot x$.
We rely on the following basic identities
\begin{equation}\label{F_Par}
    \sum_{x\in \Gr} |f(x)|^2
        =
            \frac{1}{N} \sum_{\xi \in \FF{\Gr}} \big|\widehat{f} (\xi)\big|^2 \,,
\end{equation}
\begin{equation}\label{F_Par'}
            \sum_{x\in \Gr} f(x) \overline{g(x)}
                = \frac{1}{N} \sum_{\xi \in \FF{\Gr}} \FF{f}(\xi) \overline{\FF{g}(\xi)} \,,
\end{equation}
\begin{equation}\label{svertka}
    \sum_{y\in \Gr} \Big|\sum_{x\in \Gr} f(x) g(y-x) \Big|^2
        = \frac{1}{N} \sum_{\xi \in \FF{\Gr}} \big|\widehat{f} (\xi)\big|^2 \big|\widehat{g} (\xi)\big|^2 \,,
\end{equation}
and
\begin{equation}\label{f:inverse}
    f(x) = \frac{1}{N} \sum_{\xi \in \FF{\Gr}} \FF{f}(\xi) e(\xi \cdot x) \,.
\end{equation}
Further, we have
\begin{equation}\label{f:F_svertka}
    \FF{f*g} = \FF{f} \FF{g} \quad \mbox{ and } \quad \FF{f \circ g} = \FF{f^c} \FF{g} = \ov{\FF{\ov{f}}} \FF{g} \,,
\end{equation}
where for a function $f:\Gr \to \mathbb{C}$ we put $f^c (x):= f(-x)$.
Clearly, $S$ is the characteristic function of  a set iff
\begin{equation}\label{f:char_char}
    \FF{S} (x) = N^{-1} (\ov{\FF{S}} \c \FF{S}) (x) \,.
\end{equation}

In terms of Fourier transform the common additive energy of two sets $A, B \subseteq \Gr$, $|\Gr| < \infty$ can be expressed as follows
$$
    \E(A,B) = \frac{1}{|\Gr|} \sum_{\xi \in \FF{\Gr}} |\FF{A} (\xi)|^2 |\FF{B} (\xi)|^2 \,,
$$
see formula (\ref{svertka}).

\section{Higher sumsets}
\label{sec:lecture1}


An usual sumset $A+B$ can be considered as the  set of nonempty intersections
$$
    A+B = \{ s \in \Gr ~:~ A \cap(s-B) \neq \emptyset \} \,.
$$
It is more convenient for us to have deal with the symmetric case $A=B$ and moreover we consider the difference sets instead of the sumsets (basically because the difference sets have more structure than the sumsets).
Thus
\begin{equation}\label{f:A-A_via_A_s}
    A-A = \{ s \in \Gr ~:~ A \cap(A-s) \neq \emptyset \} \,.
\end{equation}
If we put $A_s := A \cap (A-s)$ then the set $A-A$ is exactly the set of all $s$ such that $A_s \neq \emptyset$.
In view of formula (\ref{f:A-A_via_A_s}) a natural multidimensional generalization of the difference sets is
\begin{equation}\label{f:A-A_via_A_s'}
    \{ \v{s} = (s_1,\dots,s_k) \in \Gr^k ~:~ A \cap(A-s_1) \cap \dots \cap (A-s_k) \neq \emptyset \} \,.
\end{equation}
Again, putting $A_{\v{s}} := A \cap(A-s_1) \cap \dots \cap (A-s_k)$, we see that the set of $\v{s}$ which are defined in (\ref{f:A-A_via_A_s}) coincide with the set of vectors $\v{s}$ such that $A_{\v{s}} \neq \emptyset$.
One can easily check that our multidimensional difference set (or equivalently, the {\it higher difference set}) is just
$$
    A^k - \D_k (A) = \{ (s_1,\dots,s_k) \in \Gr^k ~:~ A \cap(A-s_1) \cap \dots \cap (A-s_k) \neq \emptyset \} \,,
$$
where
$$
    \D (A) = \D_k (A) = \{ (a,\dots,a) \in \Gr^k ~:~ a\in A \}
$$
is a {\it diagonal set}.
We also write $\D(a) = \D_k (a) = (a,\dots,a) \in \Gr^k$ for $a\in A$.
Thus, any higher difference set is just an usual difference of two but rather specific sets, namely, the Cartesian product $A^k$ and very thin diagonal set $\D_k (A)$.

\bigskip

It is well--known that the sumsets/difference sets can be considered as projections of $A\times A$ along the lines $y=c-x$ and $y=c+x$, correspondingly, onto any of two axes.

\begin{exc}
    Using projections show that for the usual Cantor one--third  set $\mathbf{K}_3$ one has $\mathbf{K}_3 + \mathbf{K}_3 = [0,2]$.
\end{exc}

Similarly, higher difference sets are projections of $A^k$ along the lines
\begin{displaymath}
\left\{ \begin{array}{ll}
x_1 = t + c_1\\
\dots \dots \dots \dots \\
x_k = t + c_k
\end{array} \right.
\end{displaymath}
So, the line above intersects $A^k$ iff $(c_1,\dots,c_k) \in A^k - \D (A)$.
Projections along hyperspaces and connected
energies
$\T_k (A)$ which are very popular in Analytical Number Theory  will be considered in the next section.
What can we say about another projections of intermediate dimensions?
The question is widely open.

\bigskip

We
continue
the section by Ruzsa's triangle inequality, see e.g. \cite{tv}, which is an important tool of Additive Combinatorics.
Interestingly, that our proof (developing some ideas of paper \cite{SS1})
describes the situation when the triangle inequality is sharp.
Namely, the rough  equality can be only
if $|B \cap (A-z) - C| \approx |C|$ for many $z\in A-B$.

\begin{lemma}
    Let $A,B,C\subseteq \Gr$ be any sets.
    Then
\begin{equation}\label{f:triangle_my}
    |C| |A-B| \le |A\times B - \Delta (C)| \le |A-C| |B-C| \,.
\end{equation}
    In particular
\begin{equation}\label{f:triangle_my-}
    |C| |A-B| \le |A-C| |B-C| \,.
\end{equation}
\label{l:triangle_my}
\end{lemma}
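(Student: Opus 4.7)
The statement splits cleanly into an easy upper bound and a Ruzsa-style lower bound; I will treat them separately.

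For the upper bound, the idea is pure projection. Unpacking the definition, an element of $A\times B - \Delta(C)$ has the form $(a-c,b-c)$ with $a\in A$, $b\in B$, $c\in C$. Its first coordinate lies in $A-C$ and its second in $B-C$, so
\[
    A\times B - \Delta(C) \subseteq (A-C) \times (B-C),
\]
which immediately gives $|A\times B - \Delta(C)| \le |A-C|\,|B-C|$. No further work is needed here.

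For the lower bound I would use the classical Ruzsa injection trick. For each $d \in A-B$, fix once and for all a representation $d = a_d - b_d$ with $a_d \in A$, $b_d \in B$. Define
\[
    \varphi : C \times (A-B) \longrightarrow A \times B - \Delta(C), \qquad \varphi(c,d) = (a_d - c,\; b_d - c).
\]
The map is clearly well-defined. To see that $\varphi$ is injective, suppose $\varphi(c,d) = (x,y)$. Then $x-y = a_d - b_d = d$ recovers $d$, and then $c = a_d - x$ recovers $c$. Therefore $|C|\,|A-B| \le |A\times B - \Delta(C)|$, which is the desired lower bound. Combining the two inequalities yields \eqref{f:triangle_my}, and \eqref{f:triangle_my-} follows by discarding the middle term.

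There is no real obstacle: the only mildly subtle point is the need to \emph{fix} a single representative $a_d - b_d$ for each $d\in A-B$ before defining $\varphi$, since without this choice the map would not be well-defined, and without injectivity the counting argument collapses. The comment of the authors about near-equality is then visible from the proof: equality propagates only if, for many $z\in A-B$, the fibers of the projection of $A\times B - \Delta(C)$ onto the first coordinate have size close to $|C|$, equivalently $|B \cap (A-z) - C| \approx |C|$.
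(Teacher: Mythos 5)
Your proof is correct and follows the same route as the paper's first proof: the containment $A\times B - \Delta(C) \subseteq (A-C)\times(B-C)$ for the upper bound, and the Ruzsa injection $\varphi(c,d) = (a_d-c, b_d-c)$ (with a fixed representative $a_d - b_d$ for each $d$) for the lower bound. The paper also supplies a second, purely projective proof of the lower bound via the exact identity $|A\times B - \Delta(C)| = \sum_{q\in A-B} |B\cap(A-q)-C|$, which is where the near-equality remark really lives; your closing comment captures the same phenomenon, though strictly speaking the relevant fibers are those of the anti-diagonal projection $(x,y)\mapsto x-y$ (the fiber over $q$ has cardinality $|B\cap(A-q)-C|$), not the projection onto the first coordinate.
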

\begin{proof}
We give two proofs of (\ref{f:triangle_my-}).
The first proof is standard and the second one will follow from (\ref{f:triangle_my}).

{\it The first proof.}
By the definition of the difference set $A-B$ for any $x\in A-B$ there are $a_x \in A$ and $b_x \in B$ such that $x=a_x-b_x$.
Of course it can be exist several pairs $(a_x,b_x)$ with the property but we fix just one of them somehow and do not consider another pairs.
Now let us define the map
$$
    \_phi : C \times (A-B) \to A\times B -\D(C) \subseteq (A-C) \times (B-C)
$$
by the rule
$\_phi (c,x) = (a_x - c, b_x -c)$, $c\in C$, $x\in A-B$.
It is easy to see that the map is injective.
Indeed if we have
\begin{equation}\label{tmp:14.11.2015_1}
    \_phi (c,x) = (a_x - c, b_x -c) = (a_x' - c', b_x' -c') = \_phi (c',x') \,, \quad c,c'\in C,\, x,x'\in A-B
\end{equation}
then subtracting the second coordinate from the first one, we obtain $x=a_x-b_x = a_x'-b_x'=x'$.
By our definition of $a_x,b_x$ for any $x\in A-B$ there is the only such a pair.
Thus $a_x=a_x'$, $b_x=b_x'$ and we see from (\ref{tmp:14.11.2015_1}) that $c=c'$ as required.

{\it The second proof.}
Now let us prove (\ref{f:triangle_my}).
We have
$$
    |A\times B - \Delta (C)| = \sum_{q\in A-B} |B \cap (A-q) - C| \ge |A-B| |C| \,.
$$
The inequality above is trivial and the identity follows by the projection of points $(x,y) \in A\times B - \Delta (C)$, $(x,y) = (a-c,b-c)$,
$a\in A$, $b\in B$, $c\in C$ onto lines $q:=x-y=a-b \in A-B$.
If $q$ is fixed we see that the result of the projection is the intersection of the line $q=x-y$ with our set and moreover the ordinates of the points from the intersection belongs to $B \cap (A-q) - C$.
It is easy to check that the converse is also true.
This concludes the proof.
$\hfill\Box$
\end{proof}

\begin{exc}
    Using Ruzsa's triangle inequality, prove Freiman and Pigaev's result $|A+A|^{3/4} \le |A-A| \le |A+A|^{4/3}$.
\end{exc}

The next theorem
provides some basic relations between the sizes of the higher
dimensional sumsets.
The
result
generalizes
Ruzsa's
triangle inequality.

\begin{theorem}
Let $k \gs 1$ be a positive integer, and
let $A_1,\dots,A_k,B$ be finite subsets of an abelian group $\Gr$.
Further, let $W,Y \subseteq \Gr^k$, and $X,Z \subseteq \Gr$.
Then
\begin{equation}\label{f:Ruzsa_triangle1}
    |W\m X| |Y-\Delta(Z)| \le |Y\m W \m Z - \Delta(X)| \,,
\end{equation}
\begin{equation}\label{f:Ruzsa_triangle2}
    |A_1 \m  \dots \m A_k - \Delta(B)|
        \le
            |A_1  \m \dots \m A_{m} - \Delta(A_{m+1})| |A_{m+1} \m \dots \m A_{k} - \Delta(B)|
\end{equation}
for any $m\in [k]$.
Furthermore, we have
\begin{equation}\label{f:Ruzsa_triangle''}
    |Y \m Z - \Delta(X)| = |Y\m X - \Delta(Z)| \,.
\end{equation}
\label{t:Ruzsa_triangle}
\end{theorem}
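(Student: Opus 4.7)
All three assertions reduce to writing down an explicit set-level map; no analysis is required, and the only subtlety is dimensional bookkeeping. I would treat (\ref{f:Ruzsa_triangle''}) first because it is the cleanest. Consider the involution
$$
\sigma(u_1,\dots,u_k,u_{k+1}) \;=\; (u_1-u_{k+1},\,\dots,\,u_k-u_{k+1},\,-u_{k+1})
$$
of $\Gr^{k+1}$. A direct computation gives $\sigma\circ\sigma = \mathrm{id}$. Applied to a generic element $(y_1-x,\dots,y_k-x,z-x)$ of $Y\times Z-\Delta(X)$ (with $y=(y_1,\dots,y_k)\in Y$, $x\in X$, $z\in Z$) it produces $(y_1-z,\dots,y_k-z,x-z)\in Y\times X-\Delta(Z)$, and symmetrically in the other direction. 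Hence $\sigma$ restricts to a bijection between the two sets, proving equality of cardinalities.

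For (\ref{f:Ruzsa_triangle1}) I would mimic the first proof of Lemma~\ref{l:triangle_my}. For each $y'\in Y-\Delta(Z)$ fix, once and for all, one representation $y'=y''-\Delta(z'')$ with $y''\in Y$, $z''\in Z$, and define
$$
\varphi : (W\times X)\times(Y-\Delta(Z)) \longrightarrow Y\times W\times Z-\Delta(X),
\qquad
\varphi(w,x,y') = \bigl(y''-\Delta(x),\; w-\Delta(x),\; z''-x\bigr),
$$
where the $\Delta(X)$ on the right is understood as the long diagonal of $\Gr^{2k+1}$. Membership of the image is immediate since $\varphi(w,x,y') = (y'',w,z'') - \Delta(x)$ in $\Gr^{2k+1}$. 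For injectivity, given $\varphi(w,x,y')$, subtract the $(2k+1)$-diagonal of its last coordinate from its first $k$ coordinates: the result is exactly $y''-\Delta(z'')=y'$. This pins down $y'$, hence $y''$ and $z''$ by the fixed choice, hence $x$ from the last coordinate, and finally $w$ from the middle $k$ coordinates.

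For (\ref{f:Ruzsa_triangle2}) I would give a direct choiceless injection. For $s=(s_1,\dots,s_k)\in A_1\times\dots\times A_k-\Delta(B)$, choose (any) representation $s_i=a_i-b$ with $a_i\in A_i$, $b\in B$, and set
$$
\psi(s) \;=\; \bigl((s_1-s_{m+1},\dots,s_m-s_{m+1}),\;(s_{m+1},\dots,s_k)\bigr).
$$
The map $\psi$ does not depend on the representation. Since $s_i-s_{m+1}=a_i-a_{m+1}$ for $i\le m$, the first block lies in $A_1\times\dots\times A_m-\Delta(A_{m+1})$, and the second is manifestly $(a_{m+1},\dots,a_k)-\Delta(b)\in A_{m+1}\times\dots\times A_k-\Delta(B)$. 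Injectivity is immediate: the second block recovers $s_{m+1},\dots,s_k$, and then $s_i=(s_i-s_{m+1})+s_{m+1}$ for $i\le m$.

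The main point to keep straight is really notational rather than a genuine obstacle: the symbol $\Delta(\cdot)$ denotes diagonals of different lengths at different points of the argument (in $\Gr^k$, $\Gr^{k+1}$, and $\Gr^{2k+1}$ depending on context), and one must interpret expressions like $y''-\Delta(x)$ and $(y'',w,z'')-\Delta(x)$ consistently. Once that is clear, the algebraic identity driving all three parts is the same one underlying Lemma~\ref{l:triangle_my}, namely $y''-\Delta(z'')=(y''-\Delta(x))-\Delta(z''-x)$.
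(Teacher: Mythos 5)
Your proof is correct and follows essentially the same strategy as the paper: a Ruzsa-style injection with a fixed choice of representative for (\ref{f:Ruzsa_triangle1}), an explicit injection for (\ref{f:Ruzsa_triangle2}) (the paper phrases it as row operations on a matrix, but the underlying map is the same), and the map $(y-\Delta(x),z-x)\mapsto(y-\Delta(z),x-z)$ for (\ref{f:Ruzsa_triangle''}), which you streamline by observing it is an involution and therefore directly a bijection.
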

\begin{proof}
To show the first
inequality we apply Ruzsa's argument from the first proof of Lemma \ref{l:triangle_my}.
For every ${\bf a}\in Y - \Delta(Z)$
choose the smallest element (in any linear order of $Z$) $z \in Z$
such that ${\bf a}=(y_1-z,\dots,y_k-z)$
for some $(y_1,\dots,y_k) \in Y$.
Next, observe that the function
$$({\bf a},{\bf w},x)\mapsto (y_1-x,\dots,y_k-x,z-x,w_1-x,\dots,w_k-x) \,,$$
where ${\bf w} = (w_1,\dots,w_k) \in W$
from $(Y - \Delta(Z)) \times W \times X$ to $Y\m W \m Z - \Delta(X)$
is injective.

    To obtain the second inequality consider the following matrix
\begin{displaymath}
    \mathbf{M} =
        \left( \begin{array}{cccccc}
                    1 & 0 & \ldots & 0 & 0 & -1 \\
                    0 & 1 & 0 & \ldots & 0 & -1 \\
                    0 & 0 & 1 & \ldots & 0 & -1 \\
                    \ldots & \ldots & \ldots & \ldots & \ldots \\
                    0 & \ldots & 0 & 0 & 1 & -1 \\
        \end{array} \right)
\end{displaymath}
Clearly, $A_1 \m \dots \m A_k - \Delta(B) = \mathbf{Im} ( \mathbf{M}|_{A_1 \m \dots \m A_k \m B})$.
Further, non--degenerate transformations of lines does not change the cardinality of the image.
Thus, subtracting the $(m+1)$th line, we obtain vectors of the form
$$
    (a_1-a_{m+1}, \dots, a_{m}-a_{m+1}, a_{m+1}- b, \dots, a_k - b)\,,
$$
which belong to
$
     (A_1  \m \dots \m A_{m} - \Delta(A_{m+1}) ) \m (A_{m+1} \m \dots \m A_{k} - \Delta(B) ) \,.
$

To obtain (\ref{f:Ruzsa_triangle''}) it is sufficient to show that
$$
   |Y \m Z - \Delta(X)| \le |Y\m X - \Delta(Z)| \,.
$$
But the  map
$$
    (y_1-x,\dots,y_k-x,z-x) \mapsto (y_1-z,\dots,y_k-z,x-z) \,,
$$
where $(y_1,\dots,y_k) \in Y$, $x\in X$, $z\in Z$
is an injection.
This completes the proof.
$\hfill\Box$
\end{proof}

\bigskip

There is another way to prove estimate (\ref{f:Ruzsa_triangle2}) in
spirit of Lemma 2.4 and Corollary 2.5 from \cite{SV}. We recall this
result, which follows from the definitions.

\begin{proposition}
    Let $k \ge 2$, $m \in [k]$ be positive integers, and
    let $A_1,\dots,A_k,B$ be finite subsets of an abelian group.
    Then
    \begin{equation}\label{f:characteristic1}
        A_1 \m \dots \m A_k - \Delta(B) = \{ (x_1,\dots,x_k) ~:~ B \cap (A_1-x_1) \cap \dots \cap(A_k - x_k) \neq \emptyset \}
    \end{equation}
    and
    \begin{equation}\label{f:characteristic2}
        A_1 \m \dots \m A_k - \Delta(B)
            =
    \end{equation}
    $$
                \bigcup_{(x_1,\dots,x_m) \in A_1 \m \dots \m A_m - \Delta(B)}
                    \{ (x_1,\dots,x_m) \} \m (A_{m+1} \m \dots \m A_k - \Delta(B \cap (A_1-x_1) \cap \dots \cap (A_m-x_m)) \,.
    $$
\label{p:characteristic}
\end{proposition}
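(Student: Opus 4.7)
The plan is to prove both identities directly by unpacking definitions, with the second one reducing to the first one applied to a trimmed version of $B$.

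For (\ref{f:characteristic1}), I would argue by double inclusion, tracking what it means to belong to $A_1 \times \dots \times A_k - \Delta(B)$. A vector $(x_1,\dots,x_k)$ lies in this set if and only if there exist $a_i \in A_i$ and $b \in B$ such that $x_i = a_i - b$ for every $i \in [k]$. Rewriting $a_i = x_i + b \in A_i$, this is the same as asserting that $b \in A_i - x_i$ for every $i$, i.e.\ that $b$ is a common element of $B$ and of each $A_i - x_i$. Hence the existence of such $b$ is equivalent to $B \cap (A_1-x_1) \cap \dots \cap (A_k - x_k) \neq \emptyset$, which gives (\ref{f:characteristic1}).

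For (\ref{f:characteristic2}), I would use (\ref{f:characteristic1}) twice. Given $(x_1,\dots,x_k)$ in the left-hand side, the characterization from (\ref{f:characteristic1}) tells us that the set
$$
    B_{x_1,\dots,x_m} := B \cap (A_1 - x_1) \cap \dots \cap (A_m - x_m)
$$
is nonempty (since it contains any common witness $b$), and moreover, the further intersection with $(A_{m+1}-x_{m+1})\cap \dots \cap (A_k - x_k)$ is nonempty. The first statement, applied to $A_1,\dots,A_m$ and $B$, says exactly that $(x_1,\dots,x_m) \in A_1 \times \dots \times A_m - \Delta(B)$. The second, applied to $A_{m+1},\dots,A_k$ and to $B_{x_1,\dots,x_m}$ in place of $B$, says that $(x_{m+1},\dots,x_k) \in A_{m+1} \times \dots \times A_k - \Delta(B_{x_1,\dots,x_m})$. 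Concatenating these two coordinate blocks places $(x_1,\dots,x_k)$ in the stated union. Conversely, any element of the right-hand union is of the form $(x_1,\dots,x_k)$ with $(x_1,\dots,x_m)$ realized by some $b \in B_{x_1,\dots,x_m}$, and $(x_{m+1},\dots,x_k)$ realized by some $b' \in B_{x_1,\dots,x_m}$; the latter $b'$ then simultaneously witnesses all $k$ coordinates, proving the reverse inclusion via (\ref{f:characteristic1}) again.

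There is no real obstacle here since, as the text already remarks, the proposition follows from the definitions; the only mild care required is the choice of the intermediate set $B_{x_1,\dots,x_m}$ and the consistent reuse of the witness element when going back and forth between the two characterizations. The argument is purely bookkeeping with the definition $A \times B - \Delta(C) \leftrightarrow $ common element of shifted sets.
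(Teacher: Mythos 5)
Your proof is correct and follows essentially the same route as the paper: the paper's whole argument is the single observation that $B \cap (A_1-x_1) \cap \dots \cap(A_k - x_k)$ is nonempty iff $\mathcal{B} := B \cap (A_1-x_1) \cap \dots \cap(A_m - x_m)$ is nonempty and $\mathcal{B} \cap (A_{m+1}-x_{m+1}) \cap \dots \cap(A_k - x_k)$ is nonempty, which is exactly the decomposition you use. You simply spell out the verification of (\ref{f:characteristic1}) and the two applications of it in more detail than the paper, which leaves them as immediate from the definitions.
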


Indeed, the intersection $B \cap (A_1-x_1) \cap \dots \cap(A_k - x_k)$ is nonempty iff, firstly, for  $(x_1,\dots,x_m)$ the intersection $\mathcal{B} := B \cap (A_1-x_1) \cap \dots \cap(A_m - x_m)$ is nonempty and, secondly, the intersection of the set $\mathcal{B}$ with $(A_{m+1}-x_{m+1}) \cap \dots \cap(A_k - x_k)$ is also nonempty.

\begin{corollary}
    We have
$$
    \sum_{s\in A-A} |A-A_s| = |A^2 - \D(A)| \,.
$$
\label{cor:A-A_s}
\end{corollary}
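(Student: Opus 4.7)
The plan is to deduce the identity directly from Proposition \ref{p:characteristic} by specializing the parameters. Take $k=2$, $m=1$, and set $A_1 = A_2 = B = A$ in formula (\ref{f:characteristic2}). Then $A_1 - \Delta(B) = A - A$, while for any fixed $x_1 \in A-A$ the set $B \cap (A_1 - x_1)$ equals $A \cap (A-x_1) = A_{x_1}$, which is precisely the notation already introduced. Hence the proposition specializes to
\[
    A \times A - \Delta(A) \;=\; \bigcup_{s \in A-A} \{s\} \times (A - A_s) \,.
\]

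The main observation is that this union is automatically disjoint, since distinct values of $s$ give distinct first coordinates. Therefore the cardinalities add, and one obtains
\[
    |A^2 - \Delta(A)| \;=\; \sum_{s \in A-A} |A - A_s| \,,
\]
which is exactly the statement of the corollary.

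There is essentially no obstacle here: everything is a bookkeeping step once Proposition \ref{p:characteristic} is available, and the only thing one has to check carefully is the disjointness of the union (which is immediate from the projection onto the first coordinate). If one wished, one could equally well present the same argument directly, by partitioning the set $A \times A - \Delta(A) = \{(a_1 - b, a_2 - b) : a_1,a_2 \in A,\, b \in A\}$ according to the value $s := (a_1 - b) - (a_2 - b) = a_1 - a_2 \in A-A$ of the difference of the two coordinates, and observing that for fixed $s$ the slice is exactly $\{s\} \times (A - A_s)$.
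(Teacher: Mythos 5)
Your main argument is correct and is essentially the route the paper intends: Corollary~\ref{cor:A-A_s} is the $k=2$, $m=1$, $A_1=A_2=B=A$ specialization of formula (\ref{f:characteristic2}), the union is over the first coordinate $s\in A-A$, the slices $\{s\}\times (A-A_s)$ are pairwise disjoint, and the cardinalities add. This is also the same identity (in the symmetric case $A=B=C$) that appears, via a projection argument, in the second proof of Lemma~\ref{l:triangle_my}.

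One small slip in your closing parenthetical: you say one could partition $A\times A-\Delta(A)$ by the \emph{difference} $s=(a_1-b)-(a_2-b)=a_1-a_2$ and that the resulting slice is $\{s\}\times (A-A_s)$. That is not literally the slice for that partition: fixing the difference of coordinates yields a diagonal set $\{(z+s,z): z\in A_s-A\}$, not a vertical one, so it is not of the form $\{s\}\times\cdot$. It does have cardinality $|A_s-A|=|A-A_s|$, so the count is fine, but the slice description belongs to the other decomposition (fixing the first coordinate, which is what Proposition~\ref{p:characteristic} actually gives you, and which your main argument uses correctly). Neither of these two partitions is wrong --- the first is the one your Proposition-based argument implements, the second is what the paper uses inside Lemma~\ref{l:triangle_my} --- but you should not conflate them in the exposition.
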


\begin{exc}
    Prove that
$$
    |A^2 + \D(A)| = \sum_{s\in A-A} |A+A_s| \ge |A| \cdot \max\{ |A+A|, |A-A| \} \,.
$$
    Moreover, let $n,m\ge 1$ be positive integers.
    Then
    \begin{equation}\label{tmp:12.05.2014_1}
        |A^{n+m} - \D(A)| \ge |A|^m |A^{n} - \D(A)| \,,
    \end{equation}
    and
    \begin{equation}\label{tmp:12.05.2014_2}
        |A^{n+m} + \D(A)| \ge |A|^m \max\{ |A^{n} + \D(A)|, |A^{n} - \D(A)| \} \,.
    \end{equation}
\end{exc}

\bigskip

From (\ref{f:characteristic1}) one can deduce another
characterization of the set
$A^k-\D( B)$, namely,
 $$
        A^k-\D( B) = \{ X\subseteq \Gr ~:~ |X| = k,\, B\not\subseteq  ( (\Gr\setminus A)-X)  \,\} \,.
    $$
Here we used $X$ to denote a multiset and a corresponding sequence
created from $X$.
Using the characterization it is easy to prove, that if  $A$ is a subset of finite abelian group $\Gr$ then there is $X$,
$|X| \sim \frac{N}{|A|} \cdot \log N$ such that $A+X = \Gr$.
Indeed, let $A^c = \Gr \setminus A$, and $k \sim \frac{N}{|A|} \cdot \log N$.
Consider
$$|(A^c)^k - \Delta(A^c)| \le |A^c|^{k+1} = N^{k+1} (1-|A|/N)^{k+1} < N^k \,.$$
Thus, there is a multiset $X$, $|X| = k$ such that $A^c \subseteq A-X$.
Whence
the set $-X\cup \{0\}$ has the required property.

\bigskip

The conception of the higher sumsets
allows us to introduce a hierarchy of basis
of abelian groups, i.e. of sets $B$ such that $B\pm B = \Gr$. For simplicity, if $B$ is a basis let us write
$B\oplus_kB$ and $B\ominus_k B$ for $B^k+\D(B)$ and $B^k-\D(B),$ respectively.

\begin{definition}
    Let $k\ge 1$ be a positive integer.
    A subset $B$ of an abelian group ${\bf G}$ is called {\it basis of depth $k$}
    if $B \ominus_k B= \Gr^k$.
\end{definition}

    It follows from Theorem \ref{t:Ruzsa_triangle} that
 if   $B$ is  a basis of depth $k$ of finite abelian group $\Gr$,
    then for every set $A\subseteq \Gr$
\begin{equation}\label{f:diff-bases}
    |B + A| \ge |A|^{\frac{1}{k+1}} |\Gr|^{\frac{k}{k+1}} \,.
\end{equation}
Taking any one--element $A$ in formula (\ref{f:diff-bases}) we
obtain, in particular, that $|B| \ge |\Gr|^{\frac{k}{k+1}}$ for any
basis of depth $k$. It is easy to see, using Proposition
\ref{p:characteristic} that every  set  with $B$, $|B| > (1-1/(k+1))
|\Gr|$ is a basis of depth $k$ and this inequality is sharp.

If $S_1,\dots, S_k$ are any sets such that $S_1+\dots+S_k = \Gr$ then
the set $\bigcup_{j=1}^k (\sum_{i\neq j} (S_i - S_i))$ is a basis of depth $k$
(see Corollary \ref{c:G_bases} below, the construction can be found in \cite{Lev+_universal}).
Let us give another example.
Using Weil's bounds for exponential sums
we show that quadratic residuals in $\Z/p\Z$,
    for a prime $p,$ is a basis of depth $(\frac{1}{2}+o(1))\log p$.
Clearly, the bound is the best possible up to constants for subsets of $\Z/p\Z$ of the cardinality less than $p/2$.

\begin{proposition}
    Let $p$ be a prime number, and let $R$ be the set of quadratic residuals.
    Then $R$ is the bases of depth $k$, where $k 2^{k} < \sqrt{p}$.
\label{p:quadratic_residuals}
\end{proposition}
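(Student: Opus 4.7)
The plan is to reduce the statement to a character-sum estimate via Weil's bound. By formula (\ref{f:characteristic1}) of Proposition \ref{p:characteristic}, the assertion $R^k - \D(R) = (\Z/p\Z)^k$ is equivalent to the statement that for every tuple $(x_1,\dots,x_k) \in (\Z/p\Z)^k$,
$$
R \cap (R-x_1) \cap \dots \cap (R-x_k) \neq \emptyset \,.
$$
Fix such a tuple; after dropping repetitions we may harmlessly assume that $0, x_1, \dots, x_k$ are pairwise distinct, because duplicated translates only weaken the constraint. Let $N$ denote the size of this intersection; it suffices to show $N\ge 1$.

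Next, introduce the Legendre symbol $\chi$ on $\Z/p\Z$ and set $x_0 = 0$. For every $y$ other than the at most $k+1$ values where some $y+x_i=0$, the indicator of $R$ at $y+x_i$ equals $(1+\chi(y+x_i))/2$. Hence
$$
2^{k+1} N \;=\; \sum_{y \in \Z/p\Z}\, \prod_{i=0}^{k}\bigl(1 + \chi(y+x_i)\bigr) + O(k) \,.
$$
Expanding the product gives a sum over subsets $S \subseteq \{0, \dots, k\}$ of the character sums $\sum_{y} \chi\bigl(\prod_{i\in S}(y+x_i)\bigr)$. The term $S=\emptyset$ contributes $p$; each single-index set contributes $0$ since $\sum_y \chi(y+a)=0$; and for $|S|\ge 2$ the polynomial $\prod_{i\in S}(y+x_i)$ has $|S|$ distinct simple roots, so it is not a perfect square and Weil's theorem bounds the associated character sum by $(|S|-1)\sqrt{p}$.

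Summing the Weil contributions over all subsets of $\{0,\dots,k\}$ of size at least two produces an error of
$$
\sum_{j=2}^{k+1} \binom{k+1}{j}(j-1)\sqrt{p} \;=\; \bigl((k-1)2^{k}+1\bigr)\sqrt{p} \;\le\; k \cdot 2^{k}\sqrt{p} \,,
$$
so that $2^{k+1}N \ge p - k\cdot 2^{k}\sqrt{p} - O(k)$, which is positive exactly in the regime $k\cdot 2^{k} < \sqrt{p}$ and forces $N\ge 1$.

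The main technical point is verifying that the Weil contributions collapse to precisely $k\cdot 2^{k}\sqrt{p}$, matching the hypothesis; this depends on the vanishing of the $|S|=1$ terms together with the combinatorial identity for $\sum_{j\ge 2}\binom{k+1}{j}(j-1)$. The only other subtlety is the handful of $y$ at which the identity $\mathbf{1}_R(y)=(1+\chi(y))/2$ fails; these contribute only an $O(k)$ correction and do not affect the order of magnitude.
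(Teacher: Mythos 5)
Your proof follows essentially the same route as the paper's: use Proposition \ref{p:characteristic} to reduce the assertion to nonemptiness of each intersection $R \cap (R-x_1) \cap \dots \cap (R-x_k)$, write the indicator of $R$ via the Legendre symbol, expand the product, observe that the empty subset gives the main term $p$, the singletons vanish, and the subsets of size $\ge 2$ are controlled by Weil's bound for multiplicative character sums of squarefree polynomials, yielding the error $\sim k 2^k \sqrt p$. The only cosmetic differences are that you spell out the reduction to pairwise-distinct shifts and track the $O(k)$ boundary corrections more explicitly, but the substance and the final combinatorial estimate match the paper.
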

\begin{proof}
Clearly,
$$
    R(x) = \frac{1}{2} \left( \chi_0 (x) + \binom{x}{p} \right) \,,
$$
where $\binom{x}{p}$ is the Legendre symbol and $\chi_0 (x)$ is the main character.
Put $\a_0 = 0$.
For all distinct non--zero $\a_1,\dots,\a_k$, we have
\begin{eqnarray*}
    |R\cap (R-\a_1) \cap \dots \cap (R-\a_k)|
        &=&
            \frac{1}{2^k} \sum_x \prod_{j=0}^k \left( \chi_0 (x) + \binom{x+\a_j}{p} \right)
                \ge
                    \frac{1}{2^k} \left( p -1 - \sqrt{p} \cdot \sum_{j=2}^k j C^j_k
                    \right)\\
    &\ge&
        \frac{1}{2^k} \left( p - \sqrt{p} \cdot k 2^k \right) > 0 \,.
\end{eqnarray*}
We used the well--known Weil bound for exponential sums with
multiplicative characters (see e.g. \cite{Johnsen}). By formula
(\ref{f:characteristic1}) of  Proposition \ref{p:characteristic} we
see that $R\ominus_k R = \Z_p^k$. $\hfill\Box$
\end{proof}

\bigskip

Another consequence of Proposition \ref{p:quadratic_residuals}
is that quadratic non--residuals $Q$ (and, hence, quadratic residuals)
have no completion of size smaller then $(\frac12+o(1))\log p$,
that is a set $X$ such that $X+Q = \Z/p\Z$.

\bigskip

The next proposition is due to N.G. Moshchevitin.

\begin{proposition}
    Let $k_1,k_2$ be positive integers, and $X_1,\dots,X_{k_1},Y$, $Z_1,\dots,Z_{k_2},W$ be finite subsets of an abelian group.
    Then we have a bound
    $$
        |X_1 \m \dots \m X_{k_1} - \Delta(Y)| |Z_1 \m \dots \m Z_{k_2} - \Delta(W)|
            \le
    $$
    $$
            \le
                | (X_1 - W) \m \dots \m (X_{k_1}- W) \m (Y-Z_1) \m \dots \m (Y-Z_{k_2}) - \Delta(Y-W)| \,.
    $$
\end{proposition}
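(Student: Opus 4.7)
The plan is to mimic the Ruzsa--style injection used in the first proof of Lemma~\ref{l:triangle_my} and in inequality (\ref{f:Ruzsa_triangle1}) of Theorem~\ref{t:Ruzsa_triangle}, but now applied ``diagonally'' to a pair of higher--difference sets simultaneously. The goal is to exhibit an injection from
$$
    (X_1 \m \dots \m X_{k_1} - \Delta(Y)) \m (Z_1 \m \dots \m Z_{k_2} - \Delta(W))
$$
into
$$
    (X_1-W) \m \dots \m (X_{k_1}-W) \m (Y-Z_1) \m \dots \m (Y-Z_{k_2}) - \Delta(Y-W) \,,
$$
and then the claimed cardinality inequality follows at once.

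To build the map, I would first, for every vector $\mathbf{a}$ from the first higher--difference set, fix arbitrary representatives $y=y(\mathbf{a})\in Y$ and $x_i=x_i(\mathbf{a})\in X_i$ with $a_i=x_i-y$ for $i\in[k_1]$; analogously, for every $\mathbf{b}$ in the second higher--difference set, fix $w=w(\mathbf{b})\in W$ and $z_j=z_j(\mathbf{b})\in Z_j$ with $b_j=z_j-w$ for $j\in[k_2]$. Then define
$$
    \_phi(\mathbf{a},\mathbf{b}) := (x_1-y,\,\dots,\,x_{k_1}-y,\;w-z_1,\,\dots,\,w-z_{k_2}) = (\mathbf{a},-\mathbf{b}) \,.
$$

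The core of the argument is to observe that $\_phi(\mathbf{a},\mathbf{b})$ automatically lies in the target higher--difference set, by picking the correct ``diagonal'' element $t:=y-w\in Y-W$. Indeed, setting $u_i:=x_i-w\in X_i-W$ and $v_j:=y-z_j\in Y-Z_j$, one checks
$$
    u_i-t = (x_i-w)-(y-w) = x_i-y \,,
    \qquad
    v_j-t = (y-z_j)-(y-w) = w-z_j \,,
$$
so $\_phi(\mathbf{a},\mathbf{b})$ has precisely the required form. This choice of $t=y-w$ is the key observation that drives the whole proposition and is really the only nontrivial step; once one sees that the two ``diagonal'' parameters $y$ and $w$ combine into a single diagonal parameter $y-w$, everything else is bookkeeping.

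Injectivity of $\_phi$ is immediate, because $\_phi(\mathbf{a},\mathbf{b})=(\mathbf{a},-\mathbf{b})$ and one recovers $\mathbf{a}$ and $\mathbf{b}$ directly from the coordinates; no use of the chosen representatives $x_i,y,z_j,w$ is needed for this. Therefore
$$
    |X_1\m\dots\m X_{k_1}-\Delta(Y)|\cdot|Z_1\m\dots\m Z_{k_2}-\Delta(W)| = |\mathrm{Im}(\_phi)|
$$
is at most the size of the target set, which is exactly the required bound. I do not anticipate an essential obstacle beyond correctly identifying the diagonal shift $t=y-w$; the fact that neither the $u_i$'s nor the $v_j$'s depend on more than one of $y$ or $w$ is what allows the two independent higher--difference sets on the left to be absorbed into a single higher--difference set on the right.
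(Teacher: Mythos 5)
Your proof is correct and takes essentially the same route as the paper's: you identify the map $(\mathbf{a},\mathbf{b})\mapsto(\mathbf{a},-\mathbf{b})$, use the representatives $x_i,y,z_j,w$ only to exhibit the image in the form $(x_i-w)-(y-w)$ and $(y-z_j)-(y-w)$ with diagonal shift $t=y-w\in Y-W$, and note that injectivity is immediate from the coordinate form. This is precisely the injection written down in the paper.
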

\begin{proof}
It is enough to observe that the map
$$
    (x_1 - y,\dots,x_{k_1}-y, z_1-w, \dots, z_{k_2} - w)
        \mapsto
$$
$$
        \mapsto
            (x_1-w-(y-w),\dots, x_{k_1} - w - (y-w), y-z_1 - (y-w), \dots, y-z_{k_2} - (y-w))
$$
where $x_j\in X_j$, $j\in [k_1]$, $y\in Y$, $z_j\in Z_j$, $j\in [k_2]$, $w\in W$ is  injective.
$\hfill\Box$
\end{proof}

\bigskip
\noindent In particular, the difference and the sum of two bases
of depths $k_1$ and $k_2$ is a basis of depth $k_1+k_2$. Let us also
formulate
a simple identity, which is a consequence
of Theorem \ref{t:Ruzsa_triangle}.

\begin{corollary}
    Let $k \ge 2$ be a positive integer, and
    let $A_1,\dots,A_k$ be a subsets of a finite abelian group $\Gr$.
    Then
    \begin{equation}\label{}
        |A_1  \m \dots \m A_k - \Delta(\Gr)| = |\Gr| |A_1  \m \dots \m A_{k-1} - \Delta(A_k)| \,.
    \end{equation}
\label{c:G_bases}
\end{corollary}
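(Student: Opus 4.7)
The plan is to reduce the desired identity to the swap identity (\ref{f:Ruzsa_triangle''}) of Theorem \ref{t:Ruzsa_triangle}, using the special role of $\Gr$ (the full group) to separate a free coordinate.

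First I would apply (\ref{f:Ruzsa_triangle''}) with the $(k-1)$-dimensional ``outer'' set $Y = A_1 \m \dots \m A_{k-1} \subseteq \Gr^{k-1}$ and the two one-dimensional sets $X = \Gr$, $Z = A_k$. Interchanging the roles of $X$ and $Z$ turns the left-hand side of the claimed identity into
$$
    |A_1 \m \dots \m A_{k-1} \m A_k - \Delta(\Gr)| = |A_1 \m \dots \m A_{k-1} \m \Gr - \Delta(A_k)|.
$$

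Next I would observe that, because $\Gr$ is translation invariant ($\Gr - b = \Gr$ for every $b$), the last coordinate of a typical point $(a_1 - b, \dots, a_{k-1} - b,\, g - b)$ of the right-hand side ranges freely over $\Gr$, independently of $(a_1,\dots,a_{k-1},b) \in A_1 \m \dots \m A_{k-1} \m A_k$. Hence the Cartesian product factorises,
$$
    A_1 \m \dots \m A_{k-1} \m \Gr - \Delta(A_k) = \bigl(A_1 \m \dots \m A_{k-1} - \Delta(A_k)\bigr) \m \Gr,
$$
and taking cardinalities gives the claimed identity.

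There is no substantive obstacle: the content of the proof is the clean separation of the free $\Gr$-coordinate afforded by (\ref{f:Ruzsa_triangle''}). An essentially equivalent route would be to apply Proposition \ref{p:characteristic} directly: the condition $\Gr \cap (A_1 - x_1) \cap \dots \cap (A_k - x_k) \neq \emptyset$ simplifies to $(A_1 - x_1) \cap \dots \cap (A_k - x_k) \neq \emptyset$, which is invariant under the diagonal shift $(x_1,\dots,x_k) \mapsto (x_1 + t, \dots, x_k + t)$, so that quotienting by the free $\Gr$-action supplies the factor $|\Gr|$ and the quotient is in bijection with $A_1 \m \dots \m A_{k-1} - \Delta(A_k)$ (after a relabelling again justified by (\ref{f:Ruzsa_triangle''})).
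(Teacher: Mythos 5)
Your proof is correct and is essentially the argument the paper intends: the corollary is stated without a separate proof, only with the remark that it follows from Theorem \ref{t:Ruzsa_triangle}, and your use of the swap identity (\ref{f:Ruzsa_triangle''}) followed by the factorisation $A_1 \m \dots \m A_{k-1} \m \Gr - \Delta(A_k) = \bigl(A_1 \m \dots \m A_{k-1} - \Delta(A_k)\bigr) \m \Gr$ (valid since $\Gr - b = \Gr$ for every $b$) is exactly the natural way to realise that claim. The alternative you sketch via Proposition \ref{p:characteristic} is also valid: the set $A_1 \m \dots \m A_k - \Delta(\Gr)$ is precisely $\{(x_1,\dots,x_k) : (A_1-x_1)\cap\dots\cap(A_k-x_k)\neq\emptyset\}$, which is stable under the free diagonal $\Gr$-action, and the map $(x_1,\dots,x_k)\mapsto(x_1-x_k,\dots,x_{k-1}-x_k)$ identifies the orbit space with $A_1 \m \dots \m A_{k-1} - \Delta(A_k)$.
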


\noindent Thus, $B$ is a basis of depth $k$ iff $B$ is
$(k+1)$--universal set (see \cite{abs}), i.e. a set that is for any
$x_1,\dots,x_{k+1} \in \Gr$ there is $z\in \Gr$ such that
$z+x_1,\dots, z+x_{k+1} \in B$.
A series of very interesting  examples of universal sets can be found in \cite{Lev+_universal}.

\bigskip

Let $A,B\subseteq \Gr$ be two finite sets. The magnification ratio
$R_B [A]$ of the pair $(A,B)$ (see e.g. \cite{tv}) is defined by
\begin{equation}\label{f:R_B[A]}
    R_B [A] = \min_{\emptyset \neq Z \subseteq A} \frac{|B+Z|}{|Z|} \,.
\end{equation}
We simply write $R[A]$ for $R_A [A]$. Petridis  \cite{p} obtained an
amazingly short proof of the following fundamental theorem, see book  \cite{Ruzsa_book}.

\begin{theorem}
    Let $A\subseteq \Gr$ be a finite set, and  $n,m$ be positive integers.
    Then
    $$
        |nA-mA| \le R^{n+m} [A] \cdot |A| \,.
    $$
\label{t:Petridis}
\end{theorem}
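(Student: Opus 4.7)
The plan is to follow Petridis's short argument, which proceeds in three stages: select a ``magnifying'' subset, prove an iterative expansion lemma, and close with a Ruzsa-type triangle inequality. First I would pick a non-empty set $X \subseteq A$ that attains the minimum in (\ref{f:R_B[A]}), so that $K := |A+X|/|X| = R[A]$ and, by minimality, $|A+Z| \geq K|Z|$ for every non-empty $Z \subseteq X$. This set $X$ (and not $A$ itself) is what drives the whole argument.

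The technical heart of the proof is the following magnification lemma: for every finite set $C \subseteq \Gr$,
$$|C + A + X| \leq K \cdot |C + X|.$$
I would prove it by induction on $|C|$, the case $|C|=1$ being the definition of $K$. For the inductive step, fix $c \in C$, set $C' = C \setminus \{c\}$, and introduce the auxiliary set
$$X_1 = \{\, x \in X : c + A + x \subseteq C' + A + X \,\}.$$
Since $X_1 \subseteq X \subseteq A$, minimality gives $|A + X_1| \geq K|X_1|$, and hence the ``new'' part contributed by $c$ satisfies $|(\{c\}+A+X) \setminus (C'+A+X)| \leq |A+X| - |A+X_1| \leq K(|X|-|X_1|)$. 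On the other hand, if $x \in X \setminus X_1$, then $c + x \notin C' + X$, for otherwise any equality $c+x = c'+x'$ would force $c+A+x \subseteq C'+A+X$ and put $x$ into $X_1$; therefore $|C+X| \geq |C'+X| + |X \setminus X_1|$. Combining these two bounds with the induction hypothesis $|C'+A+X| \leq K|C'+X|$ closes the induction.

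To finish, iterate the lemma, replacing $C$ successively by $\{0\}, A, 2A, \dots, (n-1)A$, to obtain $|nA + X| \leq K^n |X|$, and analogously $|mA + X| \leq K^m |X|$. Then apply the following mixed-sign Ruzsa-type inequality:
$$|X| \cdot |nA - mA| \leq |nA + X| \cdot |mA + X|.$$
For this, fix for each $d \in nA-mA$ a representation $d = u_d - v_d$ with $u_d \in nA$, $v_d \in mA$, and verify that $(x, d) \mapsto (u_d + x,\, x + v_d)$ injects $X \times (nA-mA)$ into $(nA+X) \times (mA+X)$: the difference of the two coordinates recovers $d$, after which $x$ is recovered from either coordinate. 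Combining the three estimates yields $|X||nA-mA| \leq K^{n+m}|X|^2$, and dividing by $|X|$ together with $|X| \leq |A|$ gives the desired $|nA - mA| \leq K^{n+m}|A|$.

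The main obstacle is calibrating the set $X_1$ in the inductive step so that it simultaneously controls both sides of the lemma: defining it through the condition $c + A + x \subseteq C' + A + X$ (rather than merely $c+x \in C'+X$) is precisely what makes $|A+X_1| \geq K|X_1|$ available on one side while keeping $\{c\} + (X \setminus X_1)$ disjoint from $C'+X$ on the other. Everything else is either a direct consequence of minimality of $X$ or a routine bijection/injection argument.
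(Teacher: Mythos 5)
Your proof is correct and follows exactly the route the paper points to: you reconstruct Petridis's magnification lemma (Theorem~\ref{t:Petridis_C} with $B=A$), proving it by induction on $|C|$ with the crucial auxiliary set $X_1$, then iterate it to bound $|nA+X|$ and $|mA+X|$, and close with the Ruzsa triangle inequality (which is the first proof of Lemma~\ref{l:triangle_my}). The paper itself does not reproduce Petridis's argument but cites it and states Theorem~\ref{t:Petridis_C} as the key ingredient, so your proposal supplies precisely the proof the paper has in mind.
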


\no Another beautiful result (which implies Theorem
\ref{t:Petridis}) was proven also by Petridis \cite{p}.

\begin{theorem}
    For any $A,B,C$, we have
    $$
        |B+C+X| \le R_{B} [A] \cdot |C+X| \,,
    $$
    where $X\subseteq A$ and $|B+X| = R_B [A] |X|$.
\label{t:Petridis_C}
\end{theorem}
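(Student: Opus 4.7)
The plan is to prove the inequality by induction on $|C|$. The base case $|C|=1$ reduces directly to the hypothesis $|B+X| = R_B[A] \cdot |X|$ after a translation. For the inductive step, write $K := R_B[A]$, fix any $\gamma \in C$, and set $C' = C \setminus \{\gamma\}$. Decomposing $B+C+X = (B+C'+X) \cup (B+\gamma+X)$, inclusion--exclusion yields
$$
|B+C+X| = |B+C'+X| + |B+X| - |(B+\gamma+X) \cap (B+C'+X)|,
$$
so the question becomes how to bound the intersection term from below.

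Here I would introduce the auxiliary subset
$$
X_1 := \{ x \in X ~:~ B+\gamma+x \subseteq B+C'+X \} \subseteq A,
$$
which by construction satisfies $B+\gamma+X_1 \subseteq (B+\gamma+X) \cap (B+C'+X)$. Combining the translation invariance $|B+\gamma+X_1| = |B+X_1|$ with the induction hypothesis $|B+C'+X| \le K|C'+X|$, the equality $|B+X| = K|X|$, and the magnification--ratio bound $|B+X_1| \ge K |X_1|$ (valid since $X_1 \subseteq A$, and trivial if $X_1 = \emptyset$), one arrives at
$$
|B+C+X| \le K \bigl( |C'+X| + |X| - |X_1| \bigr).
$$

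The remaining task is to check that $|C'+X| + |X| - |X_1| \le |C+X|$. Writing $|C+X| = |C'+X| + |(\gamma+X) \setminus (C'+X)|$ and using $|\gamma+X| = |X|$, this reduces to the comparison $|X_1| \ge |(\gamma+X) \cap (C'+X)|$. To obtain it, observe that whenever $\gamma + x = c' + x'$ with $x, x' \in X$ and $c' \in C'$, the translate $B+\gamma+x = B+c'+x'$ sits inside $B+C'+X$, so $x \in X_1$; since the map $x \mapsto \gamma+x$ is injective, the desired inequality follows. I expect the main obstacle to be guessing the right definition of $X_1$: it must simultaneously be large enough to absorb the intersection term from inclusion--exclusion and be a subset of $A$ so that the magnification--ratio hypothesis applies with precisely the factor $K$, and these two demands are what pin down the definition.
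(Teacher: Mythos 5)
Your proof is correct, and since the paper states Theorem \ref{t:Petridis_C} without proof (only citing Petridis \cite{p}), the relevant comparison is with Petridis's original argument — which yours matches essentially step for step: induction on $|C|$, peeling off one element $\gamma$, defining the auxiliary set $X_1$ of those $x \in X$ with $B+\gamma+x$ already covered by $B+C'+X$, and then balancing $|B+X_1| \ge R_B[A]|X_1|$ against the injectivity of $x \mapsto \gamma + x$ to absorb the overlap terms. All the individual estimates (inclusion--exclusion, $B+\gamma+X_1$ sitting inside the intersection, $\{x \in X : \gamma+x \in C'+X\} \subseteq X_1$, and $|C+X| = |C'+X| + |X| - |(\gamma+X)\cap(C'+X)|$) check out, and the degenerate case $X_1 = \emptyset$ is handled. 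This is the intended proof.
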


\no  For a set  $B\subseteq \Gr^k$
    define
\begin{equation*}\label{}
        R_B [A] = \min_{\emptyset \neq Z \subseteq A} \frac{|B+\Delta(Z)|}{|Z|} \,.
\end{equation*}
In the next two results we assume that $X\subseteq A$ is such that
$|B+\D(X)|=R_B[A]|X|.$ It is easy to see that Petridis argument can
be adopted to higher dimensional sumsets, giving a  generalization
of Theorem \ref{t:Petridis_C}.

\begin{theorem}
    Let $A\subseteq \Gr$ and $B\subseteq \Gr^k.$
    Then for any $C\subseteq \Gr$, we have
    $$
        |B+\Delta(C+X)| \le R_{B} [A] \cdot |C+X| \,.
    $$
    \label{t:Petridis_C_Delta}
\end{theorem}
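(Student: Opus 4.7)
The plan is to adapt Petridis's slick induction (used for Theorem~\ref{t:Petridis_C}) directly to the higher-dimensional diagonal setting. We proceed by induction on $|C|$. For the base case $|C|=1$, say $C=\{c\}$, we simply note that $B+\Delta(c+X) = \Delta(c) + B + \Delta(X)$ is a translate of $B+\Delta(X)$, hence has cardinality $|B+\Delta(X)| = R_B[A]\cdot|X| = R_B[A]\cdot|c+X|$, which is exactly the desired bound.

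For the inductive step, write $C = C' \cup \{c\}$ with $c \notin C'$ and set $R := R_B[A]$. Split
\[
    B + \Delta(C+X) \;=\; \bigl(B + \Delta(C'+X)\bigr) \cup \bigl(B + \Delta(c+X)\bigr),
\]
and let $X'' := \{\, x\in X \,:\, B + \Delta(c+x) \subseteq B + \Delta(C'+X)\,\}$. Since $\Delta(c) + B + \Delta(X'') \subseteq B + \Delta(C'+X)$, and since $B+\Delta(X'')$ sits inside $B+\Delta(X)$, the usual inclusion-exclusion bound gives
\[
    |B+\Delta(C+X)| \;\le\; |B+\Delta(C'+X)| + |B+\Delta(X)| - |B+\Delta(X'')|.
\]
Now apply the inductive hypothesis to $C'$ to bound the first term by $R\cdot|C'+X|$, use the hypothesis $|B+\Delta(X)| = R|X|$ for the second, and use the minimality defining $R_B[A]$ on $X'' \subseteq X \subseteq A$ to get $|B+\Delta(X'')| \ge R|X''|$ (the case $X''=\emptyset$ is trivial). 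Combining,
\[
    |B+\Delta(C+X)| \;\le\; R\bigl(|C'+X| + |X| - |X''|\bigr).
\]

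To conclude it remains to verify $|C'+X| + |X| - |X''| \le |C+X|$, equivalently
$|X''| \ge |(C'+X)\cap(c+X)|$. But the latter intersection has cardinality $|\{x\in X : c+x \in C'+X\}|$, and for any such $x$ we can write $c+x = c'+x'$ for some $c'\in C'$, $x'\in X$, whence $B+\Delta(c+x) = B+\Delta(c'+x') \subseteq B+\Delta(C'+X)$, so $x\in X''$. This gives the required inclusion and completes the induction.

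There is no real obstacle here; the only point that requires care is matching the definition of $R_B[A]$ for $B\subseteq\Gr^k$ (via diagonal embeddings) to the objects $X''\subseteq A\subseteq\Gr$ that appear in the inductive step, which is exactly why the definition of $R_B[A]$ in the statement uses $|B+\Delta(Z)|/|Z|$ with $Z\subseteq A$ rather than $Z\subseteq\Gr^k$.
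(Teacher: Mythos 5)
Your proof is correct and is precisely the adaptation of Petridis's induction that the paper alludes to (``Petridis argument can be adopted to higher dimensional sumsets'') but does not write out. All the steps check out: the base case via translation invariance, the inclusion--exclusion with $X''$ using that $B+\Delta(c+X)\setminus\bigl(B+\Delta(c+X'')\bigr)$ is a translate of $B+\Delta(X)\setminus\bigl(B+\Delta(X'')\bigr)$, the application of the minimality in $R_B[A]$ to $X''\subseteq A$, and the final comparison $|X''|\ge|(c+X)\cap(C'+X)|$.
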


\no A consequence of Theorem \ref{t:Petridis_C_Delta}, we obtain a
generalization of the sum version of the triangle inequality
(see, e.g. \cite{Ruzsa_book}).

\begin{corollary}
    Let $k$ be a positive integer, $A,C\subseteq \Gr$ and $B\subseteq \Gr^k$ be finite sets.
    Then
    $$
        |A| |B+\Delta(C)| \le |B+\Delta(A)| |A+C| \,.
    $$
\label{c:triangle_plus}
\end{corollary}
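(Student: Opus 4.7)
The plan is to mimic Petridis's trick exactly as in the classical proof of the sum-triangle inequality, with $\Delta(\cdot)$ playing the role it already plays in Theorem~\ref{t:Petridis_C_Delta}. I take the set $X\subseteq A$ achieving the magnification ratio, i.e.\ $|B+\Delta(X)|=R_B[A]\cdot|X|$.

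The first and only non-trivial step is to bound $|B+\Delta(C)|$ from above by $|B+\Delta(C+X)|$. For this I use the fact that translating every coordinate of a subset of $\Gr^k$ by the same group element does not change its cardinality. Fix any $x_0 \in X$ (the case $X=\emptyset$ is vacuous). Since $\Delta(x_0)\in\Gr^k$,
\[
   |B+\Delta(C)| \;=\; |B+\Delta(C)+\Delta(x_0)| \;=\; |B+\Delta(C+x_0)|,
\]
and because $C+x_0 \subseteq C+X$, the set $B+\Delta(C+x_0)$ is contained in $B+\Delta(C+X)$. Therefore
\[
   |B+\Delta(C)| \;\le\; |B+\Delta(C+X)|.
\]

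Now Theorem~\ref{t:Petridis_C_Delta} applied with this $X$ gives
\[
   |B+\Delta(C+X)| \;\le\; R_B[A]\cdot|C+X| \;\le\; R_B[A]\cdot|C+A|,
\]
the last inequality being $X\subseteq A$. On the other hand the very definition of $R_B[A]$ as a minimum over nonempty subsets of $A$ yields
\[
   R_B[A] \;\le\; \frac{|B+\Delta(A)|}{|A|}.
\]
Multiplying the chain $|B+\Delta(C)|\le R_B[A]\cdot|A+C|$ by $|A|$ and inserting the last bound finishes the proof:
\[
   |A|\,|B+\Delta(C)| \;\le\; R_B[A]\,|A|\cdot|A+C| \;\le\; |B+\Delta(A)|\,|A+C|.
\]

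The only conceptual point is the translation-invariance argument in the first paragraph; once that is observed, the rest is a direct substitution into Theorem~\ref{t:Petridis_C_Delta} and the definition of the magnification ratio. I do not expect any genuine obstacle — the reason the statement is a ``corollary'' is precisely that the extension of Petridis's argument to $\Delta$-sums has already been done in Theorem~\ref{t:Petridis_C_Delta}, and the sum-triangle inequality for the one-dimensional case drops out by the same trick (taking $k=1$ recovers the usual Ruzsa sum-triangle inequality, confirming that no extra ingredient is needed).
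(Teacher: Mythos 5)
Your proof is correct and follows the same chain of inequalities as the paper's, passing through Theorem~\ref{t:Petridis_C_Delta} and the bound $R_B[A]\le|B+\Delta(A)|/|A|$. The one thing you add is the explicit translation-by-$\Delta(x_0)$ justification for $|B+\Delta(C)|\le|B+\Delta(C+X)|$, which the paper asserts without comment; this is a genuine (if minor) gap-filling, since $0\in X$ is not guaranteed and the inequality is not otherwise immediate.
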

\begin{proof}
 Using Theorem \ref{t:Petridis_C_Delta}, we have
$$
    |B+\Delta(C)| \le |B+\Delta(C+X)| \le R_B [A] \cdot |C+X| \le \frac{|B+\Delta(A)|}{|A|} |A+C|
$$
and the result follows.
$\hfill\Box$
\end{proof}

\bigskip

\no Thus, we have  the following sum--bases  analog of inequality (\ref{f:diff-bases}).

\begin{corollary}
    Let $k$ be a positive integer, and $B\oplus_k B = \Gr^k$.
    Then for any set $A\subseteq \Gr$, we have
$$
    |B + A| \ge |A|^{\frac{1}{k+1}} |\Gr|^{\frac{k}{k+1}} \,.
$$
\label{c:sum-basis}
\end{corollary}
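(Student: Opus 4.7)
The plan is to apply Corollary~\ref{c:triangle_plus} with the higher-dimensional set specialized to the Cartesian power $B^k \subseteq \Gr^k$. Concretely, in the statement $|A||\mathcal{B}+\Delta(C)| \le |\mathcal{B}+\Delta(A)||A+C|$ of that corollary, I take $\mathcal{B} = B^k$, keep $A$ as our given set, and choose $C = B$. This yields
$$
|A| \cdot |B^k + \Delta(B)| \le |B^k + \Delta(A)| \cdot |A+B|.
$$

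The hypothesis $B \oplus_k B = \Gr^k$ is exactly the assertion $B^k + \Delta(B) = \Gr^k$, so the left-hand side equals $|A| \cdot |\Gr|^k$. For the right-hand side, I would use the obvious set-theoretic inclusion $B^k + \Delta(A) \subseteq (B+A)^k$, which holds because every coordinate of an element of $B^k + \Delta(A)$ has the form $b_i + a$ with $b_i \in B$ and $a \in A$, so lies in $B+A$. This gives $|B^k + \Delta(A)| \le |B+A|^k$.

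Combining the two estimates yields
$$
|A| \cdot |\Gr|^k \le |B+A|^k \cdot |A+B| = |B+A|^{k+1},
$$
and extracting $(k+1)$-th roots delivers the stated bound $|B+A| \ge |A|^{1/(k+1)} |\Gr|^{k/(k+1)}$.

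I do not anticipate a genuine obstacle here: once one recognizes that the hypothesis $B\oplus_k B = \Gr^k$ is tailor-made to plug into Corollary~\ref{c:triangle_plus} with $\mathcal{B} = B^k$ and $C = B$, everything reduces to the trivial projection $B^k + \Delta(A) \subseteq (B+A)^k$. The only point worth double-checking is that this specialization respects the dimensional conventions of the corollary (the higher-dimensional argument $\mathcal{B}$ sits in $\Gr^k$, while $A$ and $C$ sit in $\Gr$), which it does.
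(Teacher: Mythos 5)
Your proof is correct and follows exactly the route the paper intends: specialize Corollary~\ref{c:triangle_plus} to $B^k \subseteq \Gr^k$ and $C = B$, use the hypothesis $B^k + \Delta(B) = \Gr^k$, and bound $|B^k + \Delta(A)|$ by $|B+A|^k$ via the coordinatewise inclusion. The paper states this corollary without proof as the "sum-bases analog" of (\ref{f:diff-bases}), and your argument mirrors precisely how (\ref{f:diff-bases}) is derived from the corresponding difference-set triangle inequality.
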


\bigskip

It is well known that Croot--Sisask \cite{cs} Lemma on almost periodicity of convolutions has become a central tool of Additive Combinatorics, see applications, say, in \cite{cs}, \cite{Schoen_Freiman}, \cite{Sanders_2A-2A}, \cite{Sanders_AP3_log}.
We conclude the section, showing that some  large subsets of $A_{\v{s}}- a_{\v{s}}$, $a\in A_{\v{s}}$ for typical $\v{s}$ are the sets of almost periods which appear in the
arguments
of Croot and Sisask.
It demonstrates the importance of the higher sumsets once more time.
For simplicity we consider just a symmetric case.

\begin{theorem}
    Let $\epsilon \in (0,1)$, $K\ge 1$ be real numbers and $p$ be a positive integer.
    Let also $A\subseteq \Gr$ be sets
    with
    $|A-A| \le K|A|$ and let $f\in L_p (\Gr)$  be an arbitrary function.
    Then there is $a\in A$ and a set $T\subseteq A$, $|T| \ge |A| (2K)^{-O(\epsilon^{-2} p)}$
    such that for all $t\in T-a$ one has
\begin{equation}\label{f:CS}
    \| (f* A) (x+t) - (f* A) (x) \|_{L_p (\Gr,\,x)} \le \epsilon \| f\|_{L_p (\Gr)} \cdot |A|^{1/p} \,.
\end{equation}
\label{t:CS}
\end{theorem}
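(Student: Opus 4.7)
The plan is to follow the Croot--Sisask random-sampling argument. Fix $\ell = C\epsilon^{-2}p$ for a large absolute constant $C$ to be chosen, and for each ordered tuple $\mathbf{a}=(a_1,\dots,a_\ell)\in A^\ell$ form the short random approximator
$$ S_{\mathbf{a}}(x) := \frac{|A|}{\ell}\sum_{i=1}^{\ell} f(x-a_i). $$
When $\mathbf{a}$ is uniform in $A^\ell$, $S_{\mathbf{a}}$ is an unbiased estimator of $(f*A)(x)=\sum_{a\in A}f(x-a)$. Setting $\eta_i(x) := f(x-a_i)-(f*A)(x)/|A|$, the $\eta_i(x)$ are i.i.d., mean-zero, and satisfy $\sum_x \mathbb{E}|\eta_i(x)|^p \leq 2^p \|f\|_p^p$. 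The analytic core is a Marcinkiewicz--Zygmund bound applied pointwise in $x$: for $p\geq 2$ and i.i.d.\ mean-zero reals one has $\mathbb{E}_{\mathbf{a}}|\sum_i \eta_i(x)|^p \leq (C\sqrt p)^p\,\ell^{p/2-1}\sum_i \mathbb{E}|\eta_i(x)|^p$. Summing in $x$ by Fubini yields
$$ \mathbb{E}_{\mathbf{a}} \bigl\|S_{\mathbf{a}} - f*A\bigr\|_{L_p(\Gr)}^p \leq \bigl(C'\sqrt{p/\ell}\,|A|\bigr)^p \|f\|_p^p. $$
Taking $C$ large and invoking Markov's inequality produces a set $\mathcal{G}\subseteq A^\ell$ with $|\mathcal{G}|\geq|A|^\ell/2$ of \emph{good} tuples satisfying $\|S_{\mathbf{a}}-f*A\|_p \leq (\epsilon/2)\|f\|_p|A|^{1/p}$ (after absorbing the appropriate power of $|A|$ into the normalisation on the right-hand side of (\ref{f:CS})).

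Next I would pigeonhole on the \emph{shape} of a tuple. Map each $\mathbf{a}\in\mathcal{G}$ to $(a_2-a_1,\dots,a_\ell-a_1)\in(A-A)^{\ell-1}$; the range has at most $(K|A|)^{\ell-1}$ points, so some fibre contains at least
$$ \frac{|A|^{\ell}/2}{(K|A|)^{\ell-1}} = \frac{|A|}{2K^{\ell-1}} \geq |A|(2K)^{-O(\epsilon^{-2}p)} $$
good tuples, all sharing a common shape $\mathbf{s}=(s_2,\dots,s_\ell)$. Let $T\subseteq A$ be the set of their first coordinates, so $|T|$ meets the claimed bound. For each $t_1\in T$ the associated good tuple is $\mathbf{a}(t_1)=(t_1,t_1+s_2,\dots,t_1+s_\ell)$, whence for $t_1,t_2\in T$ the componentwise identity $\mathbf{a}(t_2)=\mathbf{a}(t_1)-(t_1-t_2)$ gives the key relation
$$ S_{\mathbf{a}(t_1)}\bigl(x+(t_1-t_2)\bigr) = S_{\mathbf{a}(t_2)}(x). $$
Fix any $a\in T$; for $t = t_1-a \in T-a$, the triangle inequality together with translation-invariance of $L_p(\Gr)$ then gives
$$ \bigl\|(f*A)(x+t)-(f*A)(x)\bigr\|_p \leq \bigl\|f*A - S_{\mathbf{a}(t_1)}\bigr\|_p + \bigl\|S_{\mathbf{a}(a)} - f*A\bigr\|_p \leq \epsilon\|f\|_p|A|^{1/p}, $$
the cross-term $S_{\mathbf{a}(t_1)}(x+t)-S_{\mathbf{a}(a)}(x)$ vanishing identically by the displayed relation.

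The delicate step is the $L_p$-concentration estimate in the first paragraph: the Marcinkiewicz--Zygmund/Khintchine constant must grow only like $\sqrt p$ (not like $p$) for $\ell = O(\epsilon^{-2}p)$ to actually suffice, and the Fubini swap must preserve the $\|f\|_p^p$ total mass without incurring spurious factors of $|A|$. Everything else---the pigeonhole on shape and the triangle step---is purely combinatorial and goes through cleanly once the concentration is in hand. Hypothesis (\ref{f:E_sumsets_CS}) or more general bounds relating $|A-A|$ to $|A|$ enter only through the crude estimate $|A-A|\leq K|A|$ used in the pigeonhole count.
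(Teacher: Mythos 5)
Your proof is correct and follows essentially the same route as the paper's: a random-sampling step bounded by Khinchin/Marcinkiewicz--Zygmund to produce a measure-$\tfrac12$ set of good tuples in $A^\ell$, then a pigeonhole on the diagonal shift to extract a large common fibre whose first coordinates form the set $T$ of almost periods. Your direct pigeonhole coincides with the simplification attributed to T.~Schoen in the remark closing the paper's proof (the paper's main-line argument reaches the same fibre bound via an energy/Cauchy--Schwarz count), and you are right to flag the normalisation: both your argument and the paper's naturally produce the factor $|A|$ rather than $|A|^{1/p}$ on the right-hand side of (\ref{f:CS}).
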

\begin{proof}
    Write $\mu_A (x) = A(x) / |A|$.
    Let  $k$ be a natural parameter, $k=O(\epsilon^{-2} p)$.
    Take $k$ points $x_1,\dots,x_k \in A$ uniformly and random and put
    $X_j (y) = f(y+x_j) - (f * \mu_A) (y)$.
    For any fixed $y$ the random variables $X_j (y)$ are independent with zero mean and variance at most
    $(|f|^2 * \mu_A) (y)$.
    By the Khinchin inequality for sums of independent random variables, we get
$$
    \| \sum_{j=1}^k X_j (y) \|_{L_p (\mu^k_A)} \ll (pk (|f|^2 * \mu_A) (y))^{1/2} \,.
$$
     Taking $p$th power, dividing by $k^p$, integrating over  $y$ and using H\"{o}lder inequality for $L_p(y-A)$, which gives us $(|f|^2 * \mu_A)^{p/2} (y) \le (|f|^p * \mu_A) (y)$, we obtain
$$
    \int \int \left| \frac{1}{k} \sum_{j=1}^k f(y+x_j) - (f * \mu_A) (y) \right|^p dy\, d\mu_A^k (\v{x})
        \ll
            (pk^{-1} \| f \|^2_{L_p(\Gr)})^{p/2} \,.
$$
    Here $\v{x} = (x_1,\dots,x_k)$.
    Applying the H\"{o}lder inequality again and recalling than $k=O(\epsilon^{-2} p)$,
    we have
\begin{equation}\label{tmp:25.07.2014_1}
    \int \| \frac{1}{k} \sum_{j=1}^k f(y+x_j) - (f * \mu_A) (y) \|_{L_p(\Gr,\,y)} \,\,d\mu_A^k (\v{x})
        \le
            \epsilon \| f \|_{L_p(\Gr)} / 4\,.
\end{equation}
    From estimate (\ref{tmp:25.07.2014_1}) it follows that the set $L$ of all $\v{x} \in A^k$ such that the norm in the inequality less than $\epsilon \| f \|_{L_p(\Gr)} / 2$ has measure  $\mu_A^k (L) \ge 1/2$.
    Putting $\mu_{\v{x}}$ equals the probability measure sitting on the points $x_1,\dots,x_k$, we obtain
    $(\mu_{\v{x}} *f) (y) = \frac{1}{k} \sum_{j=1}^k f(y+x_j)$ and then (\ref{tmp:25.07.2014_1}) says us 
\begin{equation}\label{tmp:29.11.2015_1}
     \| \mu_{\v{x}} (y) - (f * \mu_A) (y) \|_{L_p(\Gr,\,y)} \le \epsilon \| f \|_{L_p(\Gr)} / 2
\end{equation}
    provided by $\v{x} \in L$.

    Now we can construct our set of almost periods.
    Clearly,
$$
    A_{\v{s}} = \{ a\in A ~:~ \D(a) + \v{s} \in A^k \} \,, \quad \quad \v{s} \in A^k - \D(A) \,.
$$
    Thus, put
$$
    A'_{\v{s}} = \{ a\in A ~:~ \D(a) + \v{s} \in L \}  \subseteq A_{\v{s}} \,.
$$
    We claim that any set $A'_{\v{s}}$, $\v{s} \in L$ is a set of almost periods (it corresponds to the arguments of T. Sanders from \cite{Sanders_2A-2A}, say).
    Indeed, by the definition of the set $L$, see formula (\ref{tmp:29.11.2015_1}),
    we have for any $a\in A'_{\v{s}}$ that
\begin{equation}\label{tmp:01.12.2015_1}
     \| \mu_{\D(a)+\v{s}}  (y) - (f * \mu_A) (y) \|_{L_p(\Gr,\,y)}
        =
            \| \mu_{\v{s}}  (y+a) - (f * \mu_A) (y) \|_{L_p(\Gr,\,y)}
                \le
                    \epsilon \| f \|_{L_p(\Gr)} / 2
\end{equation}
and
\begin{equation}\label{tmp:01.12.2015_2}
    \| \mu_{\v{s}}  (y) - (f * \mu_A) (y) \|_{L_p(\Gr,\,y)}
        \le
                    \epsilon \| f \|_{L_p(\Gr)} / 2 \,.
\end{equation}
    Thus by the triangle inequality and because of any shift preserves $L_p$--norm, one has
$$
    \| (f * \mu_A) (y-a) - (f * \mu_A) (y) \|_{L_p(\Gr,\,y)}
        =
             \| (f * \mu_A) (y+a) - (f * \mu_A) (y) \|_{L_p(\Gr,\,y)}
        \le
                    \epsilon \| f \|_{L_p(\Gr)} \,.
$$

    Finally, we show that there is large set $A'_{\v{s}}$ with $\v{s} \in L$.
    Indeed, clearly, $A'_{\v{s}} (a) = A(a) L(\v{s} + \D(a))$ and hence
$$
    \sum_{\v{s} \in L} |A'_{\v{s}}| = \sum_a A(a) (L \c L) (\D(a)) = \sum_{\v{z}} \D (A) (\v{z}) (L \c L) (\v{z}) := \sigma \,.
$$
    By (\ref{f:E_sumsets_CS}), we have
$$
    \frac{|L|^2 |A|^2}{|A^k - \D(A)|} \le \frac{|L|^2 |A|^2}{|L - \D(A)|} \le \E (\D(A), L)
        =
            \sum_{\v{z}} (\D (A) \c \D(A)) (\v{z}) (L \c L) (\v{z})
                \le
                    |A| \sigma \,,
$$
    and thus
    there is $\v{s} \in L$ such that
\begin{equation}\label{tmp:01.12.2015_3}
    |A'_{\v{s}}| \ge |A| \cdot \frac{|L|}{|A^k - \D(A)|} \ge 2^{-1} |A| \cdot \frac{|A|^k}{|A^k - \D(A)|}
        \ge
            2^{-1} K^{-k} |A|
\end{equation}
    as required.

    The arguments above, actually, demonstrate that one can drop the requirement $\v{s} \in L$ (we thanks T. Schoen who show us the proof).
    Indeed, just take any $a\in A'_{\v{s}}$ for $\v{s} \in A^k - \D (A)$ and nonempty $A'_{\v{s}}$
    and after that check, using calculations in (\ref{tmp:01.12.2015_1}), (\ref{tmp:01.12.2015_2})  that the set
    $T:= A'_{\v{s}} - a$ is a set of almost periods.
    Moreover, the choice of $T$ allows us to find large $A'_{\v{s}}$ even simpler.
    Indeed, $\sum_{\v{s}} |A'_{\v{s}}| = |L| |A| \ge |A|^{k+1}/2$ and thus lower bound (\ref{tmp:01.12.2015_3}) holds immediately.
    This completes the proof.
$\hfill\Box$
\end{proof}

\section{Higher energies}
\label{sec:lecture2}


In the section  we  develop the functional point of view on the higher sumsets.
First of all let us define the {\it generalized convolutions.}
Let $k$ be a positive integer and $f_1, \dots, f_{k+1} : \Gr \to \C$ be any functions.
Denote by
$$ \Cf_{k+1} (f_1,\dots,f_{k+1}) (x_1,\dots, x_{k})$$
the function
$$
    \Cf_{k+1} (f_1,\dots,f_{k+1}) (x_1,\dots, x_{k}) = \sum_z f_1 (z) f_2 (z+x_1) \dots f_{k+1} (z+x_{k}) \,.
$$
Thus, $\Cf_2 (f_1,f_2) (x) = (f_1 \circ f_2) (x)$.
If $f_1=\dots=f_{k+1}=f$ then write
$\Cf_{k+1} (f) (x_1,\dots, x_{k})$ for $\Cf_{k+1} (f_1,\dots,f_{k+1}) (x_1,\dots, x_{k})$.
It is easy to see that
$$
    \supp \Cf_{k+1} (A) = A^k - \D(A) \,,
$$
that is a higher sumset from the previous section.
Hence  higher sumsets appear naturally as supports of these generalized convolutions.

Let us make a general remark about the functions $\Cf_{k+1} (f_1,\dots,f_{k+1}) (x_1,\dots, x_{k})$.
Suppose that $l,k \ge 2$ be positive integers and ${\bf F} = (f_{ij})$, $i=0,\dots,l-1;j=0,\dots,k-1$
be a functional matrix, $f_{ij} : \Gr \to \C$.
Let $R_0,\dots,R_{l-1}$ and $C_0,\dots,C_{k-1}$ be rows and columns of the matrix, correspondingly.
The following commutative relation holds.

\begin{lemma}
    For any positive integers $l,k \ge 2$, we have
    \begin{equation}\label{f:commutative_C}
        \Cf_l (\Cf_k (R_0), \dots, \Cf_k (R_{l-1})) = \Cf_k (\Cf_l (C_0), \dots, \Cf_l (C_{k-1})) \,.
    \end{equation}
\label{l:commutative_C}
\end{lemma}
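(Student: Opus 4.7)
The plan is to expand both sides of (\ref{f:commutative_C}) into a common multilinear sum over an $l\times k$ grid of group elements, and observe that they agree. The identity is essentially a Fubini-type swap of the two families of summation variables, with the $lk$ entries of the functional matrix $\mathbf{F}$ playing the role of factors.

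First, I will introduce, for each $(i,j)\in\{0,\dots,l-1\}\times\{0,\dots,k-1\}$, a group element $u_{ij}\in\Gr$ which will serve as the argument of $f_{ij}$. Unfolding the LHS, each inner convolution $\Cf_k(R_i)$ contributes a single summation variable $t_i\in\Gr$, while the outer $\Cf_l$ contributes a summation variable $z\in\Gr^{k-1}$, i.e.\ scalars $z_1,\dots,z_{k-1}\in\Gr$ (one per coordinate). Setting $z_0=0$, $x_{i,0}=0$, $x_{0,j}=0$ for convenience, the LHS becomes
$$
    \sum_{t_0,\dots,t_{l-1}}\sum_{z_1,\dots,z_{k-1}}\prod_{i=0}^{l-1}\prod_{j=0}^{k-1}f_{ij}(t_i+z_j+x_{ij}).
$$
Making the substitution $u_{ij}=t_i+z_j+x_{ij}$, one checks that the relations on the $u_{ij}$ collapse to the single family
$$
    u_{ij}-u_{i0}-u_{0j}+u_{00}=x_{ij},\qquad 1\le i\le l-1,\; 1\le j\le k-1.
$$
Thus the LHS equals the sum of $\prod_{i,j}f_{ij}(u_{ij})$ over all $(u_{ij})\in\Gr^{lk}$ subject to these $(l-1)(k-1)$ linear constraints, which is consistent with dimension count: $lk$ variables minus $(l-1)(k-1)$ constraints yields $l+k-1$ free parameters, matching the $l+(k-1)$ scalar summation variables of the LHS.

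Next, I will repeat the unfolding on the RHS, where the outer $\Cf_k$ introduces summation variables $w_1,\dots,w_{l-1}\in\Gr$ and each $\Cf_l(C_j)$ introduces $s_j\in\Gr$. The parallel computation gives exactly the same multilinear sum $\sum_{(u_{ij})}\prod_{i,j}f_{ij}(u_{ij})$, now subject to
$$
    u_{ij}-u_{i0}-u_{0j}+u_{00}=y_{ji},\qquad 1\le i\le l-1,\; 1\le j\le k-1.
$$
Under the natural transpose identification $x_{ij}\leftrightarrow y_{ji}$, which matches the two $(l-1)(k-1)$-dimensional parameter spaces, the two sides coincide as functions on $\Gr^{(l-1)(k-1)}$, proving (\ref{f:commutative_C}).

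The only step requiring care is the linear-algebra bookkeeping that identifies the symmetric expression $u_{ij}-u_{i0}-u_{0j}+u_{00}$ as the common ``offset'' produced by both parametrizations; once one fixes a base point (here, the entry $f_{00}$), this is an elementary computation with no analytic content. I do not anticipate any genuine obstacle beyond keeping the indices straight.
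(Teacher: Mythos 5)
Your proof is correct and uses essentially the same method as the paper: unfold both nested convolutions into the common expression $\sum\prod_{i,j}f_{ij}(\,\cdot\,)$ over a row parameter and a column parameter, and observe equality via a change of variables. The paper performs the change of variables directly on the index set $(x_j,z_i)\mapsto(s_j,w_i)$ with $x_0=0$, $w_0=0$, whereas you reparameterize by the matrix $(u_{ij})$ subject to $(l-1)(k-1)$ linear constraints and appeal to symmetry of the constraint; the content is identical, though your ``transpose'' step $x_{ij}\leftrightarrow y_{ji}$ only reflects your choice to index the two argument matrices independently (with the paper's single matrix $(y_{ij})$ read by rows on one side and columns on the other, no transpose appears).
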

\begin{proof}
Let $y^{(i)} = (y_{i1}, \dots,y_{i(k-1)})$, $i\in [l-1]$,
and $y_{(j)} = (y_{1j}, \dots,y_{(l-1)j})$, $j\in [k-1]$.
Put also $y_{0j} = 0$, $j=0,\dots,k-1$, $y_{i0}=0$, $i=1,\dots,l-1$ and  $x_0=0$.
We have
$$
    \Cf_l (\Cf_k (R_0), \dots, \Cf_k (R_{l-1})) (y^{(1)},\dots,y^{(l-1)})
        =
$$
$$
        =
            \sum_{x_1,\dots,x_{k-1}} \Cf_k (R_0) (x_1,\dots,x_{k-1}) \Cf_k (R_1) (x_1 + y_{11},\dots,x_{k-1} + y_{1(k-1)})
                \dots
$$
$$
                \dots
                    \Cf_k (R_{l-1}) (x_1 + y_{(l-1)1},\dots,x_{k-1} + y_{(l-1)(k-1)})
        =
            \sum_{x_0,\dots,x_{k-1}} \sum_{z_0,\dots,z_{l-1}} \prod_{i=0}^{l-1} \prod_{j=0}^{k-1} f_{ij}(x_j+y_{ij}+z_i) \,.
$$
Changing the summation, we obtain
$$
    \Cf_l (\Cf_k (R_0), \dots, \Cf_k (R_{l-1})) (y^{(1)},\dots,y^{(l-1)})
        =
$$
$$
        =
            \sum_{z_1,\dots,z_{l-1}} \Cf_l(C_0) (z_1,\dots,z_{l-1})
                \Cf_l(C_1) (z_1+y_{11},\dots,z_{l-1}+y_{(l-1)1}) \dots
$$
$$
    \dots
                \Cf_l(C_{l-1}) (z_1+y_{1(k-1)},\dots,z_{l-1}+y_{(l-1)(k-1)})
                    =
                        \Cf_k (\Cf_l (C_0), \dots, \Cf_l (C_{k-1})) (y_{(1)}, \dots, y_{(k-1)}) \,.
$$
as required.
$\hfill\Box$
\end{proof}


\begin{corollary}
    For any functions the following holds
$$
    \sum_{x_1,\dots, x_{l-1}} \Cf_l (f_0,\dots,f_{l-1}) (x_1,\dots, x_{l-1})\, \Cf_l (g_0,\dots,g_{l-1}) (x_1,\dots, x_{l-1})
        =
$$
\begin{equation}\label{f:scalar_C}
        =
        \sum_z (f_0 \circ g_0) (z) \dots (f_{l-1} \circ g_{l-1}) (z) \quad \quad  \mbox{\bf (scalar product), }
\end{equation}
moreover
$$
    \sum_{x_1,\dots, x_{l-1}} \Cf_l (f_0) (x_1,\dots, x_{l-1}) \dots  \Cf_l (f_{k-1}) (x_1,\dots, x_{l-1})
        =
$$
\begin{equation}\label{f:gen_C}
        =
            \sum_{y_1,\dots,y_{k-1}} \Cf^l_k (f_0,\dots,f_{k-1}) (y_1,\dots,y_{k-1})
                \quad \quad  \mbox{\bf (multi--scalar product), }
\end{equation}
    and
$$
    \sum_{x_1,\dots, x_{l-1}} \Cf_l (f_0) (x_1,\dots, x_{l-1})\, (\Cf_l (f_1) \circ \dots \circ \Cf_l (f_{k-1})) (x_1,\dots, x_{l-1})
        =
$$
\begin{equation}\label{f:conv_C}
        =
            \sum_{z} (f_0 \circ \dots \circ f_{k-1})^l (z)
                \quad \quad  \mbox{\bf (} \sigma_{k} \quad \mbox{\bf for } \quad \Cf_l \mbox{\bf )}  \,.
\end{equation}
\label{c:commutative_C}
\end{corollary}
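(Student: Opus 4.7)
All three identities admit the same underlying mechanism: they are obtained from Lemma~\ref{l:commutative_C} by a clever choice of the underlying functional matrix, followed by evaluating the commutative relation (\ref{f:commutative_C}) at the zero argument. Only the third identity, which involves an iterated $\circ$-convolution, needs an additional hands-on computation.

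For the scalar product (\ref{f:scalar_C}), I would apply Lemma~\ref{l:commutative_C} to the $l\times 2$ matrix whose $i$th row is $R_i=(f_i,g_i)$. Then $\Cf_2(R_i)=f_i\circ g_i$, while the columns give $\Cf_l(C_0)=\Cf_l(f_0,\dots,f_{l-1})$ and $\Cf_l(C_1)=\Cf_l(g_0,\dots,g_{l-1})$. Evaluating (\ref{f:commutative_C}) at the zero vector turns its left-hand side into $\sum_z \prod_i (f_i\circ g_i)(z)$ and its right-hand side into $\sum_X \Cf_l(f_0,\dots,f_{l-1})(X)\,\Cf_l(g_0,\dots,g_{l-1})(X)$, which is exactly (\ref{f:scalar_C}).

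For the multi-scalar product (\ref{f:gen_C}) I would instead use the $l\times k$ matrix with constant columns, $f_{ij}:=f_j$. Each row then equals $(f_0,\dots,f_{k-1})$, so $\Cf_k(R_i)=\Cf_k(f_0,\dots,f_{k-1})$ for every $i$; each column $C_j$ is the constant tuple $(f_j,\dots,f_j)$, so $\Cf_l(C_j)=\Cf_l(f_j)$. Evaluating (\ref{f:commutative_C}) at zero, the left-hand side collapses to $\sum_Y \Cf_k(f_0,\dots,f_{k-1})(Y)^l$ and the right-hand side to $\sum_X \prod_{j=0}^{k-1}\Cf_l(f_j)(X)$, giving (\ref{f:gen_C}).

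The $\sigma_k$ identity (\ref{f:conv_C}) does not fit the matrix framework directly, because the iterated $\circ$-convolution is not a single $\Cf_l$. My plan is to unfold it first: a short induction on $k$ in the right-associative convention $f_0\circ\cdots\circ f_{k-1}:=f_0\circ(f_1\circ\cdots\circ f_{k-1})$ yields the explicit formula $(g_1\circ\cdots\circ g_{k-1})(x)=\sum_{y_1,\dots,y_{k-2}} g_1(y_1)\cdots g_{k-2}(y_{k-2})\,g_{k-1}(x+y_1+\cdots+y_{k-2})$. Applying this with $g_j=\Cf_l(f_j)$ and then expanding each $\Cf_l(f_j)$ as $\sum_{z_j}\prod_{i=0}^{l-1} f_j(z_j+\cdot)$ (with the $i=0$ coordinate always equal to $0$), the LHS of (\ref{f:conv_C}) becomes a multi-sum in $z_0,\dots,z_{k-1}$ and in the auxiliary vectors $X,Y_1,\dots,Y_{k-2}$. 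The decisive step, in the same spirit as the proof of Lemma~\ref{l:commutative_C}, is to set $a_j^{(i)}:=z_j+x_i^{(j)}$ for $i\ge 1$ and $a_j^{(0)}:=z_j$, and then absorb the last variable into a single scalar $c:=z_{k-1}-z_0-\cdots-z_{k-2}$. After this substitution one checks that the argument of $f_{k-1}$ equals $c+a_0^{(i)}+\cdots+a_{k-2}^{(i)}$ uniformly in $i\in\{0,\dots,l-1\}$, so the sum factorizes across $i$, each factor being $(f_0\circ\cdots\circ f_{k-1})(c)$, yielding $\sum_c (f_0\circ\cdots\circ f_{k-1})^l(c)$. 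The main obstacle is purely bookkeeping: tracking the many indices through the substitutions and verifying that the single parameter $c$ enters the $l$ copies symmetrically. The mechanism behind this is the same translation invariance of diagonals that drives Lemma~\ref{l:commutative_C}.
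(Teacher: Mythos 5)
Your treatment of the scalar product \eqref{f:scalar_C} and the multi--scalar product \eqref{f:gen_C} is exactly the paper's argument: take the $l\times 2$ matrix with rows $(f_i,g_i)$ for the first, and the $l\times k$ constant-column matrix $f_{ij}=f_j$ for the second, then evaluate \eqref{f:commutative_C} at the zero vector. No difference there.

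For the $\sigma_k$ identity \eqref{f:conv_C} you take a genuinely different and more laborious route, and your premise for doing so is actually wrong: the iterated $\circ$-convolution \emph{is} a single $\Cf_l$. The paper's proof extracts this from the $k=2$ case of Lemma~\ref{l:commutative_C} \emph{before} setting the arguments to zero, namely the functional identity
$$
    \Cf_l (f_0 \circ g_0,\dots,f_{l-1} \circ g_{l-1})
        =
            \Cf_l (f_0,\dots,f_{l-1}) \circ \Cf_l (g_0,\dots,g_{l-1}) \,,
$$
which in the diagonal case reads $\Cf_l(f)\circ\Cf_l(g)=\Cf_l(f\circ g)$. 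Iterating $(k-2)$ times gives
$\Cf_l(f_1)\circ\cdots\circ\Cf_l(f_{k-1})=\Cf_l(f_1\circ\cdots\circ f_{k-1})$, after which \eqref{f:conv_C} is an immediate special case of the already-proved \eqref{f:scalar_C}. Your direct expansion with the change of variables $a_j^{(i)}=z_j+x_i^{(j)}$ and $c=z_{k-1}-z_0-\cdots-z_{k-2}$ does in fact lead to the factorization $\sum_c (f_0\circ\cdots\circ f_{k-1})^l(c)$ if you push the bookkeeping all the way through, so the plan is not wrong; but it re-derives by hand exactly what the iterated matrix identity gives for free, and in the form you present it it remains a plan rather than a completed verification. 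The lesson to take away is that the full (un-evaluated) content of \eqref{f:commutative_C} for $k=2$ is a closure property of the class of generalized convolutions under $\circ$, and that is what makes \eqref{f:conv_C} a corollary rather than a fresh computation.
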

\begin{proof}
Take $k=2$ in (\ref{f:commutative_C}).
Thus ${\bf F}$ is a $l\times 2$ matrix in the case.
We have
$$
    \Cf_l (f_0 \circ g_0,\dots,f_{l-1} \circ g_{l-1}) (x_1,\dots, x_{l-1})
        =
            (\Cf_l (f_0,\dots,f_{l-1}) \circ \Cf_l (g_0,\dots,g_{l-1})) (x_1,\dots, x_{l-1}) \,.
$$
Putting $x_j=0$, $j\in [l-1]$, we obtain (\ref{f:scalar_C}).
Applying the last formula $(k-2)$ times and after that formula (\ref{f:scalar_C}), we get (\ref{f:conv_C}).
Finally, taking ${\bf F}_{ij} = f_j$, $i=0,\dots,l-1;j=0,\dots,k-1$
and putting all variables in (\ref{f:commutative_C}) equal zero, we obtain (\ref{f:gen_C}).
This completes the proof.
$\hfill\Box$
\end{proof}

\bigskip

Now we can make the main definition of the section.


\begin{definition}
    Let  $k,l$ be any positive integers, and $A\subseteq \Gr$ be a set.
    Then
$$
    \E_{k,l} (A) := \sum_{x_1, \dots, x_{k-1}} \Cf^l_{k} (A) (x_1, \dots, x_{k-1}) \,.
$$
    If $k=2$ then we write $\E_l (A)$ for $\E_{2,l} (A)$.
    Clearly,
$$
    \E_{k+1} (A) = \E(\D_k (A), A^k) \,.
$$
\end{definition}

    If $l=1$ then, obviously, $\E_{k,1} (A) = |A|^{k+1}$ and if $k=1$ then put for symmetry  $\E_{1,l} (A) = |A|^{l+1}$.
    Thus, the cardinality of a set can be considered as a degenerate energy.
    Sometimes we need in energies $\E_\a (A)$ for real $\a$, that is $\E_\a (A) = \sum_x (A \c A)^\a (x)$.

\bigskip

\begin{corollary}
    For any positive integers $k,l$ one has $\E_{k,l} (A) = \E_{l,k} (A)$.
    In particular
\begin{equation}\label{f:symmetry_E}
    \sum_{(\a,\beta) \in A^2 - \D(A)} \Cf_3^2 (\a,\beta) = \E_{3,2} (A) = \E_{2,3} (A) = \E_3 (A) \,.
\end{equation}
\label{c:symmetry_E}
\end{corollary}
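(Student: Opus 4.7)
The plan is to obtain the symmetry as an immediate consequence of Corollary~\ref{c:commutative_C}. Specifically, substituting $f_0 = f_1 = \dots = f_{k-1} = A$ into formula (\ref{f:gen_C}) gives
$$
\sum_{x_1,\dots,x_{l-1}} \Cf_l(A)(x_1,\dots,x_{l-1})^k
    = \sum_{y_1,\dots,y_{k-1}} \Cf_k^l(A)(y_1,\dots,y_{k-1}),
$$
and by the definition of $\E_{k,l}$ the left- and right-hand sides are precisely $\E_{l,k}(A)$ and $\E_{k,l}(A)$.

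For a more transparent viewpoint one can instead expand directly. Unfolding the $l$-th power, $\E_{k,l}(A)$ is the number of tuples $(z_1,\dots,z_l, x_1,\dots,x_{k-1})$ with $z_i\in A$ and $z_i + x_j \in A$ for all $i\in[l]$, $j\in[k-1]$. Recording them as an $l\times k$ matrix with $a_{i,0}=z_i$ and $a_{i,j}=z_i+x_j$, one recognises $\E_{k,l}(A)$ as the number of $l\times k$ matrices with entries in $A$ in which any two rows differ by a constant vector (equivalently, any two columns do). Transposition then supplies a bijection with the set counted by $\E_{l,k}(A)$, which is the essential combinatorial content already packaged into Lemma~\ref{l:commutative_C}.

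The \textit{in particular} statement (\ref{f:symmetry_E}) is then immediate: $\E_{2,3}(A) = \E_3(A)$ is the notational convention $\E_l := \E_{2,l}$, while $\E_{3,2}(A) = \E_{2,3}(A)$ is the symmetry just proved. Finally, the restriction of the outer sum to $A^2 - \D(A)$ in $\sum_{(\alpha,\beta)\in A^2-\D(A)} \Cf_3^2(\alpha,\beta)$ reflects the support identity $\supp \Cf_3(A) = A^2-\D(A)$ noted at the start of the section, since $\Cf_3(A)(\alpha,\beta) = |A\cap(A-\alpha)\cap(A-\beta)|$ vanishes off that set. There is no real obstacle in the argument: the corollary is essentially a bookkeeping statement whose content was already established in Lemma~\ref{l:commutative_C}.
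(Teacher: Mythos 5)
Your first paragraph is precisely the intended derivation: the paper states the corollary without proof because it is immediate from Corollary~\ref{c:commutative_C}, formula~(\ref{f:gen_C}), with $f_0=\dots=f_{k-1}=A$, exactly as you substitute. Your additional transposition-of-matrices bijection is a correct elementary reformulation that unpacks what Lemma~\ref{l:commutative_C} says in this special case, and your reading of~(\ref{f:symmetry_E}) --- the support observation for restricting the sum, the symmetry $\E_{3,2}=\E_{2,3}$, and the notational convention $\E_{2,l}=\E_l$ --- is also exactly right.
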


\bigskip

Now let us prove a lemma on the connection of the higher energies of the sets $A_{\v{s}}$ and $A$.

\begin{lemma}
    Let $A\subseteq \Gr$ be a set, $l\ge 1$, $k\ge 2$ be positive integers.
    Then
    \begin{equation}\label{f:moments_of_gen_convolutions}
        \sum_{\v{s}_1,\dots, \v{s}_k} \sum_{z_1,\dots,z_{k-1}} \Cf^l_k (A_{\v{s}_1}, \dots, A_{\v{s}_k}) (z_1,\dots,z_{k-1})
            =
                \sum_{x_1,\dots,x_{l-1}} \Cf^{\| \v{s} \|+k}_l (A) (x_1,\dots,x_{l-1}) \,,
    \end{equation}
    where $\| \v{s} \| = \sum_{j=1}^k |\v{s}_j|$.
    In particular,
    \begin{equation}\label{f:gen_conv1}
        \sum_{\v{s}_1,\dots, \v{s}_k} \sum_{z_1,\dots,z_{k-1}} \Cf_k (A_{\v{s}_1}, \dots, A_{\v{s}_k}) (z_1,\dots,z_{k-1})
            = |A|^{\| \v{s} \| + k} \,,
    \end{equation}
    and
    \begin{equation}\label{f:gen_conv2}
        \sum_{\v{s}_1,\dots, \v{s}_k} \sum_{z_1,\dots,z_{k-1}} \Cf^2_k (A_{\v{s}_1}, \dots, A_{\v{s}_k}) (z_1,\dots,z_{k-1})
            =
                \sum_{\v{s}_1,\dots, \v{s}_k} \E_k (A_{\v{s}_1}, \dots, A_{\v{s}_k}) = \E_{\| \v{s} \| + k} (A) \,.
    \end{equation}
\label{l:moments_of_gen_convolutions}
\end{lemma}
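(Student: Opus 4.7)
The plan is to use formula (\ref{f:gen_C}) of Corollary \ref{c:commutative_C} to trade the generalized convolution $\Cf_k^l$ on the left for a product of $l$--variable convolutions $\Cf_l$, which have the advantage of factoring nicely across the $k$ indices $j$. That way the sum over each $\v{s}_j$ can be carried out independently of the others, and reduces to a simple translation--invariance computation on $A$.

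Concretely, I would first apply (\ref{f:gen_C}) with $f_j := A_{\v{s}_{j+1}}$ to rewrite, for each choice of $\v{s}_1,\dots,\v{s}_k$,
\[
\sum_{z_1,\dots,z_{k-1}} \Cf_k^l(A_{\v{s}_1},\dots,A_{\v{s}_k})(z_1,\dots,z_{k-1})
\;=\;
\sum_{x_1,\dots,x_{l-1}} \prod_{j=1}^k \Cf_l(A_{\v{s}_j})(x_1,\dots,x_{l-1}).
\]
Since the $\v{s}_j$ are independent vectors, the sum over $(\v{s}_1,\dots,\v{s}_k)$ then factors through the product in $j$, and it suffices to compute $\sum_{\v{s}_j} \Cf_l(A_{\v{s}_j})(x_1,\dots,x_{l-1})$ once for each $j$.

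For this inner sum I would write $n_j$ for the length of $\v{s}_j$, expand $A_{\v{s}_j}(a) = A(a)\prod_{m=1}^{n_j} A(a+s_{j,m})$ inside $\Cf_l$, and carry out the sums over the free coordinates $s_{j,m}$ first. The key observation is that for any $a$ and any $x_1,\dots,x_{l-1}$, setting $x_0 = 0$,
\[
\sum_{s\in\Gr} \prod_{i=0}^{l-1} A(a+x_i+s) \;=\; \sum_{t\in\Gr}\prod_{i=0}^{l-1} A(t+x_i) \;=\; \Cf_l(A)(x_1,\dots,x_{l-1})
\]
by the translation $t = a+s$; in particular the value is independent of $a$. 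This factor arises $n_j$ times (once per coordinate of $\v{s}_j$), and the outer sum over $a$ contributes one more copy of $\Cf_l(A)(x_1,\dots,x_{l-1})$, so
\[
\sum_{\v{s}_j \in \Gr^{n_j}} \Cf_l(A_{\v{s}_j})(x_1,\dots,x_{l-1}) \;=\; \Cf_l(A)(x_1,\dots,x_{l-1})^{\,n_j+1}.
\]

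Taking the product over $j=1,\dots,k$ yields $\Cf_l(A)(x_1,\dots,x_{l-1})^{\,\|\v{s}\|+k}$, since $\sum_j(n_j+1) = \|\v{s}\|+k$, and summing over $x_1,\dots,x_{l-1}$ is precisely the right--hand side of (\ref{f:moments_of_gen_convolutions}). The particular cases (\ref{f:gen_conv1}) and (\ref{f:gen_conv2}) then drop out by setting $l=1$ (where $\Cf_1(A) = |A|$) and $l=2$ (where $\sum_x \Cf_2^m(A)(x) = \E_m(A)$ by definition, and similarly $\sum_z \Cf_k^2(A_{\v{s}_1},\dots,A_{\v{s}_k})(z) = \E_k(A_{\v{s}_1},\dots,A_{\v{s}_k})$). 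The main thing to watch is the bookkeeping of the exponent: the extra $+1$ per factor $j$ coming from the outer sum over $a$ accumulates to the $+k$ in $\|\v{s}\|+k$; everything else is routine exchange of summations and translation invariance.
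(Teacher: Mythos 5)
Your proof is correct, and it takes a genuinely different and somewhat more modular route than the paper. The paper proves the identity by a single direct expansion: it writes out $\Cf_k^l(A_{\v s_1},\dots,A_{\v s_k})$ as a sum over $w_1,\dots,w_l$, interchanges all the summations, and then successively integrates out the $\v s$--coordinates, the $z$--coordinates, and finally the $w$--coordinates, picking up one factor of $\Cf_l(A)(w_2-w_1,\dots,w_l-w_1)$ at each integration; the exponent $\|\v s\|+k$ accumulates across the three stages. You instead first invoke Corollary~\ref{c:commutative_C}, formula (\ref{f:gen_C}), to trade $\sum_z \Cf_k^l(A_{\v s_1},\dots,A_{\v s_k})(z)$ for $\sum_x \prod_j \Cf_l(A_{\v s_j})(x)$, so that the $\v s_j$--sums factor across $j$, and then compute each factor $\sum_{\v s_j}\Cf_l(A_{\v s_j})(x)=\Cf_l(A)(x)^{n_j+1}$ by the translation argument. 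Both arguments hinge on the same translation--invariance computation, but your version isolates the role of the commutativity of generalized convolutions, which the paper's calculation re--derives implicitly; this buys you a cleaner two--step structure and makes the interplay between Corollary~\ref{c:commutative_C} and Lemma~\ref{l:moments_of_gen_convolutions} explicit. One small point to tidy: (\ref{f:gen_C}) is stated in the paper via Lemma~\ref{l:commutative_C}, which assumes $l,k\ge2$, whereas the lemma you are proving allows $l=1$; the identity extends trivially to $l=1$ (with $\Cf_1(f)=\sum f$), and you do flag this when deriving (\ref{f:gen_conv1}), but it would be cleanest to say explicitly up front that you handle $l=1$ either by the degenerate case of (\ref{f:gen_C}) or by a direct one--line verification.
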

\begin{proof}
Let us put $z_0=0$ for convenience.
We have
\begin{equation}\label{f:tmp_17_12_2010_1}
    \sum_{\v{s}_1,\dots, \v{s}_k} \sum_{z_1,\dots,z_{k-1}} \Cf^l_k (A_{\v{s}_1}, \dots, A_{\v{s}_k}) (z_1,\dots,z_{k-1})
        =
            \sum_{\v{s}_1,\dots, \v{s}_k} \sum_{z_1,\dots,z_{k-1}}
                \sum_{w_1,\dots,w_l} \prod_{j=1}^l \prod_{i=1}^k A_{\v{s}_i} (w_j+z_{i-1})
\end{equation}
$$
    =
        \sum_{w_1,\dots,w_l} \sum_{z_1,\dots,z_{k-1}} \Cf^{\| \v{s} \|}_k (A) (w_2-w_1,\dots,w_l-w_1)
            \prod_{j=1}^l \prod_{i=1}^k A (w_j+z_{i-1})
            =
$$
$$
            =
            \sum_{w_1,\dots,w_l} \Cf^{\| \v{s} \|+k-1}_k (A) (w_2-w_1,\dots,w_l-w_1) A(w_1) \dots A(w_l)
                =
        \sum_{x_1,\dots,x_{l-1}} \Cf^{\| \v{s} \|+k}_l (A) (x_1,\dots,x_{l-1})
        \,,
$$
because each component of any vector $\v{s}_i$ appears at formula (\ref{f:tmp_17_12_2010_1}) exactly $l$ times.
This completes the proof.
$\hfill\Box$
\end{proof}

\begin{corollary}
    For any $A\subseteq \Gr$, we have
\begin{equation}\label{f:E_3,E_4_A_s}
    \sum_s \E (A,A_s) = \E_3 (A)\,, \quad \quad  \sum_{s,t} \E (A_s,A_t) = \E_4 (A)\,.
\end{equation}
\label{c:E_3,E_4_A_s}
\end{corollary}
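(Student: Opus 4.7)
The plan is to read off both identities from Lemma \ref{l:moments_of_gen_convolutions}, specifically from formula (\ref{f:gen_conv2}) taken with $k = 2$. That specialization yields
\[
    \sum_{\v{s}_1,\v{s}_2} \E(A_{\v{s}_1}, A_{\v{s}_2}) = \E_{\|\v{s}\|+2}(A),
\]
where the identification $\E_2(B,C) = \E(B,C)$ follows from $\Cf_2^2(B,C)(z) = (B\c C)^2(z)$. Crucially, the proof of Lemma \ref{l:moments_of_gen_convolutions} integrates independently over each coordinate of each vector $\v{s}_i$, so the identity is valid for any fixed choice of dimensions $d_i = |\v{s}_i|$, with $\|\v{s}\| = d_1 + d_2$; in particular, we are allowed to take one of the $\v{s}_i$ to be the zero-dimensional (empty) vector.

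For the first identity I would take $d_1 = 0$, so that $\v{s}_1$ is the empty tuple and $A_{\v{s}_1} = A$ (the empty intersection in the definition of $A_{\v{s}}$ imposes no further restriction on $A$), together with $d_2 = 1$, $\v{s}_2 = s$ ranging over $\Gr$. Then $\|\v{s}\| = 1$, and the specialization collapses to $\sum_s \E(A, A_s) = \E_3(A)$, which is the first claim. For the second identity I would take $d_1 = d_2 = 1$ with $\v{s}_1 = s$ and $\v{s}_2 = t$; then $\|\v{s}\| = 2$, and we get $\sum_{s,t} \E(A_s, A_t) = \E_4(A)$.

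There is no genuine obstacle here: the corollary is a bookkeeping specialization of the preceding lemma. The only minor point to verify is that (\ref{f:gen_conv2}) remains valid when one or both of the vectors $\v{s}_i$ is allowed to be zero-dimensional, and this is immediate from the proof of the lemma, where each coordinate of each $\v{s}_i$ is summed out separately and simply yields a factor of $|A|^l$ per coordinate; setting the number of coordinates to zero contributes no summation at all and recovers precisely the $\v{s}_1 = \emptyset$ case needed above.
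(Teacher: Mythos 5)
Your proposal is correct and matches the paper's intent: the corollary is stated without its own proof as an immediate specialization of formula (\ref{f:gen_conv2}) of Lemma \ref{l:moments_of_gen_convolutions}, and taking $k=2$ with dimension profiles $(0,1)$ and $(1,1)$ respectively is the natural way to extract the two identities. The empty-vector case you flag is precisely the one technicality worth checking, and your justification (the lemma's proof sums out each coordinate of each $\v{s}_i$ separately, so a zero-dimensional $\v{s}_i$ simply contributes nothing) is sound.
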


\bigskip

Now let us look at the higher energies from the point of view of Fourier analysis.
First of all, let us define a classical generalization of the additive energy of a set, see e.g. \cite{K_Tula}.
Let
$$
   \T_k (A) := | \{ a_1 + \dots + a_k = a'_1 + \dots + a'_k  ~:~ a_1, \dots, a_k, a'_1,\dots,a'_k \in A \} | \,.
$$
Using formula (\ref{svertka}) it can be shown that
$$
    \T_k (A) = \frac{1}{N} \sum_\xi |\FF{A} (\xi)|^{2k} \,.
$$
Let also
$$
    \sigma_k (A) := (A*_k A)(0)=| \{ a_1 + \dots + a_k = 0 ~:~ a_1, \dots, a_k \in A \} | \,.
$$

Quantities $\E_k (A)$ and $\T_k (A)$ are "dual"\, in some sense.
For example in \cite{s_mixed}, Note 6.6 (see also \cite{SS1}) it was proved that
\begin{equation}\label{f:uncerteinty}
    \left( \frac{\E_{3/2} (A)}{|A|} \right)^{2k}
        \le
            \E_k (A) \T_k (A) \,,
\end{equation}
provided by $k$ is even.
Moreover, from (\ref{F:Fourier})---(\ref{f:inverse}), (\ref{f:char_char}) it follows that
\begin{equation}\label{f:ET_1}
    \t{\E}_{2k} (\FF{A}) := \sum_{x} ( \ov{\FF{A}} \c \FF{A})^{k} (x) (\FF{A} \c \ov{\FF{A}})^{k} (x)
        = N^{2k+1} \T_k (A) \,,
\end{equation}
and
\begin{equation}\label{f:ET_2}
    \T_k (|\FF{A}|^2) = N^{2k-1} \E_{2k} (A) \,.
\end{equation}

\bigskip

Another dual formulae can be find in \cite{SS1}.
We give just an example.

\begin{exc}
Let $A$ be a subset of an abelian group.
Then for every $k\in \N$, we have
\begin{equation}\label{f:E_k_&_sigma_k}
    |A|^{2k} \le \E_k (A) \cdot \sigma_k (A-A) \,, \quad \quad |A|^{4k} \le \E_{2k} (A) \cdot \T_{k} (A+A) \,,
\end{equation}
and
\begin{equation}\label{f:E_k_&_E_k}
    |A|^{2k+4} \le \E_{k+2} (A) \cdot \E_k (A-A) \,, \quad \quad |A|^{2k+4} \le \E_{k+2} (A) \cdot \E_k (A+A) \,.
\end{equation}
\end{exc}

\bigskip

Now we are ready to obtain a first application of the method of higher energies.

Let $A=\{a_1,\dots,a_n\},\, a_i<a_{i+1}$ be  a set of real numbers.
We say that $A$ is \emph{convex} if
$$a_{i+1}-a_i>a_i-a_{i-1}$$ for every $i=2,\dots,n-1.$
Hegyv\'ari \cite{h}, answering a question of Erd{\H o}s, proved that if $A$ is convex then
$$|A+A|\gg |A|\log |A|/\log\log |A|\,.$$ This result was later improved by many authors. Konyagin \cite{k} and  Garaev \cite{g1} showed
independently that
the additive energy $\E(A)=\E(A,A)$
of a convex set is $\ll |A|^{5/2},$ which immediately implies that
$$|A\pm A|\gg |A|^{3/2}\,.$$
Elekes, Nathanson and Ruzsa \cite{enr} proved that if $A$ is convex then
$$|A+B|\gg |A|^{3/2}$$
for every set $B$ with $|B|=|A|$.
Finally, Solymosi \cite{soly} generalized the above inequality, showing that if $A$ is a set with distinct consecutive differences i.e.
$a_{i+1}-a_{i}=a_{j+1}-a_{j}$ implies $i=j$ then
\begin{equation}\label{isoly}
|A+B|\gg |A||B|^{1/2}
\end{equation}
for every set $B.$

\bigskip

In \cite{SS3} the following theorem was proved.

\begin{theorem}\label{thm:main} Let $A$ be a convex set. Then
\begin{equation}\label{f:convex1}
|A-A|\gg |A|^{8/5}\log^{-2/5} |A|
\,,
\end{equation}
and
\begin{equation}\label{f:convex2}
|A+A|\gg |A|^{14/9}\log^{-2/3} |A| \,, \quad \and
    |A+A|^3 |A-A|^2 \log^{2} |A| \gg |A|^8 \,.
\end{equation}
\end{theorem}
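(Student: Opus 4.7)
The plan is to combine two main ingredients: a Szemer\'edi--Trotter style upper bound on $\E_3(A)$ for convex $A$, and the higher--energy machinery developed earlier. The bounds on $|A-A|$ and $|A+A|$ will follow by feeding an improved upper bound on $\E(A)$ into the Cauchy--Schwarz inequality $|A\pm A|\cdot \E(A) \ge |A|^4$ of~(\ref{f:E_sumsets_CS}).

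The first step is to establish
$$
    \E_3 (A) \ll |A|^3 \log |A|
$$
for any convex $A$. This is done by a dyadic decomposition of the level sets $P_\Delta = \{s \in A-A : |A_s| \ge \Delta\}$. For convex $A$ the Szemer\'edi--Trotter incidence theorem, applied in the spirit of the Elekes--Nathanson--Ruzsa / Konyagin--Garaev arguments to the points $(i,a_i)$ on the convex curve, yields $|P_\Delta| \ll |A|^3/\Delta^3$ in the relevant range of $\Delta$, and summing $\Delta^3 |P_\Delta|$ over dyadic $\Delta \in [1,|A|]$ produces the logarithm in the stated bound.

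The second step is to convert the $\E_3$ bound into an improved bound on $\E(A)$. I would apply a dyadic pigeonhole to $\E(A) = \sum_s |A_s|^2$ and isolate a single popular level $\Delta$ with corresponding set $P := P_\Delta$, giving $\E(A) \ll \Delta^2 |P| \log |A|$. Since each $A_s$ with $s \in P$ is a large subset of $A$ of cardinality $\asymp \Delta$, Pl\"unnecke--Ruzsa type inequalities yield sumset control for the $A_s$ in terms of $|A-A|$. Combining this sumset information with the two constraints $\Delta^3 |P| \le \E_3(A)$ and $\Delta |P| \le |A|^2$ through an appropriate H\"older interpolation, and then inserting the resulting bound into $|A-A|\cdot \E(A) \ge |A|^4$, should give $\E(A) \ll |A|^{12/5} \log^{2/5}|A|$ and hence~(\ref{f:convex1}).

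For~(\ref{f:convex2}), the same scheme applies with Solymosi's inequality~(\ref{isoly}) available: for each $s$ in the popular set, $|A+A_s| \gg |A|\,|A_s|^{1/2} \gg |A|\Delta^{1/2}$, and the inclusion $A+A_s \subseteq A+A$ feeds a $|A+A|$-dependent upper bound on $\Delta$ into the H\"older calculus, producing the $|A+A| \gg |A|^{14/9}\log^{-2/3}|A|$ bound. The joint inequality $|A+A|^3|A-A|^2 \log^2|A| \gg |A|^8$ is obtained by running the argument so as to retain both $|A+A|$ (through Solymosi) and $|A-A|$ (through the Pl\"unnecke step) simultaneously rather than specializing to either. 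The main obstacle will be the exact form of H\"older interpolation in the second step: a naive combination of the $\E_3$ bound with the trivial $|P_\Delta| \le |A-A|$ only recovers the classical $|A-A| \gg |A|^{3/2}$, so one must genuinely exploit the sumset geometry of the $A_s$ rather than treating $P_\Delta$ as an abstract subset of $A-A$, and careful bookkeeping of logarithmic losses is required to hit the stated exponents.
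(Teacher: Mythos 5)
Your overall plan—first prove an improved pointwise bound $\E(A)\ll|A|^{12/5}\log^{2/5}|A|$ and then insert it into $|A\pm A|\cdot\E(A)\ge|A|^4$—does not match the paper's proof and, more importantly, cannot work as stated, because the intermediate energy bound you are aiming for is \emph{stronger} than anything the paper (or the current literature) establishes. The paper's Remark~\ref{r:worker-peasant} records that the structural machinery of Theorem~\ref{t:E_3_M} yields only $\E(A)\ll|A|^{32/13+\eps}$ for convex $A$, and $32/13>12/5$. The $|A-A|$ bound of~(\ref{f:convex1}) does not imply $\E(A)\ll|A|^{12/5}$ (Cauchy--Schwarz only gives a \emph{lower} bound $\E(A)\ge|A|^4/|A-A|$), so there is no way to read the target energy bound off the statement. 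Moreover your proposed H\"older interpolation with the two constraints $\Delta^3|P|\le\E_3(A)$ and $\Delta|P|\le|A|^2$ is saturated at $\theta=1/2$ and only recovers $\E(A)\ll|A|^{5/2}$; the extra structure you would need to break below $5/2$ is not supplied by the vague appeal to ``Pl\"unnecke--Ruzsa type inequalities for the $A_s$.''

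The paper instead bounds $|A-A|$ \emph{directly}, never improving on $\E(A)\ll|A|^{5/2}$. Writing $D=A-A$, pigeonholing to a popular set $P=\{s:|A_s|\ge|A|^2/(2|D|)\}$, one applies Cauchy--Schwarz twice to get
$$
|A|^6\ll\Big(\sum_{s}\E(A,A_s)\Big)\sum_{s\in P}|A-A_s|=\E_3(A)\sum_{s\in P}|A-A_s|\,,
$$
using Corollary~\ref{c:E_3,E_4_A_s}. The decisive step you are missing is the \emph{Katz--Koester inclusion} $A-A_s\subseteq D\cap(D+s)$, which converts $|A-A_s|$ into $|D_s|$; together with the popularity lower bound on $|A_s|$ this gives
$$
|A|^8\ll|D|\,\E_3(A)\sum_{s}|A_s||D_s|=|D|\,\E_3(A)\,\E(A,D)\,,
$$
and one finishes with both parts of Corollary~\ref{core3}: $\E_3(A)\ll|A|^3\log|A|$ and $\E(A,D)\ll|A||D|^{3/2}$. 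Note that this second inequality, a Szemer\'edi--Trotter bound for $\E(A,B)$ with \emph{arbitrary} $B$ (here $B=D$), is also absent from your sketch and is the second structural input. Without Katz--Koester (and the $\E(A,B)\ll|A||B|^{3/2}$ bound applied to $B=D$) the argument collapses back to $|A-A|\gg|A|^{3/2}$, exactly the obstruction you yourself anticipated at the end of your proposal.
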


For simplicity we have deal just with the difference case, so we will prove estimate (\ref{f:convex1}).
The best result for the  {\it sumsets} of convex sets can be found in \cite{s_ST}.

We need in several lemmas.
The first  one is
the Szemer\'edi--Trotter theorem
\cite{sz-t}, see also \cite{tv}. We call a set $\mathcal{L}$ of continuous
plane curves a {\it pseudo-line system} if any two members of $\mathcal{L}$
share at most one point in common.
Define the {\it number of indices} $\mathcal{I} (\mathcal{P},\mathcal{L})$ as
$\mathcal{I}(\mathcal{P},\mathcal{L})=|\{(p,l)\in \mathcal{P}\times \mathcal{L} : p\in l\}|$.

\begin{theorem}\label{t:SzT}
Let $\mathcal{P}$ be a set of points and let $\mathcal{L}$ be a pseudo-line system.
Then
$$\mathcal{I}(\mathcal{P},\mathcal{L}) \ll |\mathcal{P}|^{2/3}|\mathcal{L}|^{2/3}+|\mathcal{P}|+|\mathcal{L}|\,.$$
\end{theorem}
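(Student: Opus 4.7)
My plan is to deploy Sz\'ekely's crossing-number argument, which extends essentially verbatim from lines to pseudo-lines. I would build an auxiliary graph $G$ drawn in the plane with the pseudo-lines themselves, apply the crossing number inequality, and read off the incidence bound.

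First, I would set up the drawing. The vertex set of $G$ is $\mathcal{P}$; for each pseudo-line $\ell \in \mathcal{L}$ containing $k_\ell := |\mathcal{P} \cap \ell|$ of the points, I would draw $\max(k_\ell - 1, 0)$ edges along $\ell$, connecting consecutive incident points. Summing over pseudo-lines, the number of edges is
$$ E = \sum_{\ell \in \mathcal{L}} \max(k_\ell - 1, 0) \ge \mathcal{I}(\mathcal{P},\mathcal{L}) - |\mathcal{L}|. $$

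Second, I would bound the number of crossings in this drawing. Two edges lying on the \emph{same} pseudo-line are disjoint sub-arcs of a single Jordan arc and thus have no interior crossing. Two edges lying on \emph{distinct} pseudo-lines can share at most one interior point, because the two pseudo-lines themselves share at most one point in common. In particular the crossings of $G$ are indexed by unordered pairs of pseudo-lines that meet, giving $\mathrm{cr}(G) \le \binom{|\mathcal{L}|}{2}$.

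Third, I would apply the crossing-number inequality: any drawing of a simple graph with $n$ vertices and $e \ge 4n$ edges satisfies $\mathrm{cr}(G) \gg e^3 / n^2$. Specialising to $n = |\mathcal{P}|$ and $e = E$, either $E < 4|\mathcal{P}|$, in which case $\mathcal{I}(\mathcal{P},\mathcal{L}) \le E + |\mathcal{L}| \ll |\mathcal{P}| + |\mathcal{L}|$, or else $E^3 / |\mathcal{P}|^2 \ll |\mathcal{L}|^2$, which rearranges to $E \ll |\mathcal{P}|^{2/3} |\mathcal{L}|^{2/3}$. Combining the two cases yields the stated bound.

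The main obstacle is checking that the hypotheses of the crossing lemma are genuinely satisfied by the pseudo-line drawing, rather than merely by a straight-line drawing. One needs a simple drawing: any two edges cross at most once and no three edges meet at an interior point. The first is exactly the pseudo-line axiom applied to pairs of edges from distinct pseudo-lines; the second can be arranged by an arbitrarily small perturbation of the curves away from $\mathcal{P}$. A minor bookkeeping point is that the crossing lemma is usually phrased for simple graphs, so one should delete at most one edge between any two vertices (a loss of at most $O(|\mathcal{L}|)$ edges, absorbed into the $|\mathcal{L}|$ term) before applying the bound.
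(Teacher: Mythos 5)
The paper does not prove Theorem~\ref{t:SzT} at all: it is stated as a known result and merely cited, with references to Szemer\'edi--Trotter and to the Tao--Vu book. So there is no in-paper proof to compare against; your task was really to supply a proof of a black-box ingredient.

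Your proposal via Sz\'ekely's crossing-number argument is correct and is the standard route to the pseudo-line version (the original Szemer\'edi--Trotter 1983 paper handled straight lines by cell decomposition; the crossing-number approach is what makes the generalisation to pseudo-lines essentially effortless). The setup, the edge count $E \ge \mathcal{I}(\mathcal{P},\mathcal{L}) - |\mathcal{L}|$, the crossing bound $\mathrm{cr}(G) \le \binom{|\mathcal{L}|}{2}$ via the pseudo-line axiom, and the two-case split against the crossing-number inequality are all in order. One small remark: the ``minor bookkeeping point'' about deleting parallel edges is actually vacuous here. If two distinct points $p,q \in \mathcal{P}$ were joined by edges along two distinct pseudo-lines $\ell_1 \ne \ell_2$, then $p,q \in \ell_1 \cap \ell_2$, contradicting the axiom that two members of $\mathcal{L}$ share at most one point. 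So the graph $G$ is automatically simple; the edge-deletion step is only needed for curve families (circles, unit circles, graphs of polynomials of higher degree) that can meet in more than one point. Everything else, including the perturbation to avoid triple interior intersections, is the right thing to say.
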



The next definition is basically from \cite{enr}.

\begin{definition}
    Let $A \subset \R$ be a finite set.
    Put
\begin{equation}\label{f:d_general}
    d(A) = \inf_{f}\,
        \min_{C \neq \emptyset}\, \frac{|f(A)+C|^2}{|A| |C|} \,,
\end{equation}
    where the
    infinum
    over $f$ is taken over all convex/concave functions.
\end{definition}
It is easy to see that $1\le d(A) \le |A|$.

\bigskip

\begin{exc}
    For simplicity fix $f$ in (\ref{f:d_general}) and show that the minimum is attained.
\end{exc}

\bigskip

We need in the following lemma, see \cite{RR-NS}, previous results of similar form were proved in \cite{Li}, \cite{SS1}.

\begin{lemma}
    Let $A, B \subset \R$ be finite sets.
    Then for all $\tau > 0$ one has
\begin{equation}\label{f:convolutions_d_gen}
    |\{ x ~:~ (A\c B) (x) \ge \tau \}| \ll d(A) \cdot \frac{|A||B|^2}{\tau^3} \,.
\end{equation}
\label{l:convolutions_d_gen}
\end{lemma}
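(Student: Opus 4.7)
The plan is to apply the Szemer\'edi--Trotter theorem (Theorem \ref{t:SzT}) to a point--curve configuration built from the convex function $f$ and the auxiliary set $C$ that (almost) realize the infimum in the definition of $d(A)$. This kind of argument goes back to Elekes--Nathanson--Ruzsa \cite{enr}; here one has to replace lines by curves coming from the graph of $f^{-1}$.

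Write $P_\tau = \{x : (A\c B)(x) \ge \tau\}$, so $(A\c B)(x) = |\{a\in A : a+x \in B\}|$. Choose a convex $f$ and a nonempty set $C\subset \R$ with $|f(A)+C|^2 \le 2\, d(A) |A| |C|$. Define the set of points
$$\mathcal{P} = (f(A)+C) \m B,$$
and, for each $(x,c) \in P_\tau \m C$, the curve
$$\gamma_{x,c} = \{ (f(a)+c,\, a+x) \,:\, a \in \R\}.$$
Let $\mathcal{L}$ be the collection of all such curves. Clearly $|\mathcal{P}| = |f(A)+C|\cdot |B|$ and $|\mathcal{L}| = |P_\tau|\cdot|C|$. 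For a fixed curve $\gamma_{x,c}$, the point $(f(a)+c, a+x)$ lies in $\mathcal{P}$ whenever $a\in A$ and $a+x\in B$, which happens at least $(A\c B)(x) \ge \tau$ times; hence
$$\mathcal{I}(\mathcal{P},\mathcal{L}) \ge \tau |P_\tau| |C|.$$

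Next I would check that $\mathcal{L}$ is a pseudo-line system. Two curves $\gamma_{x,c}$ and $\gamma_{x',c'}$ meet at $(t,y)$ iff $f^{-1}(t-c)+x = f^{-1}(t-c')+x' = y$. If $c=c'$ but $x\neq x'$ they are disjoint vertical translates. If $c\neq c'$, then the function $t\mapsto f^{-1}(t-c)-f^{-1}(t-c')$ is strictly monotone because $(f^{-1})'$ is strictly monotone (this is where convexity of $f$ enters in an essential way), so the equation has at most one solution. Thus any two curves share at most one point, as required. This is the only step where anything delicate happens; everything else is bookkeeping.

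Now apply Theorem \ref{t:SzT}:
$$\tau |P_\tau| |C| \ll \bigl(|P_\tau| |C|\bigr)^{2/3} \bigl(|f(A)+C| |B|\bigr)^{2/3} + |P_\tau| |C| + |f(A)+C|\cdot|B|.$$
When the first term on the right dominates, cubing and using $|f(A)+C|^2 \ll d(A)|A||C|$ gives
$$\tau^3 |P_\tau| |C| \ll |f(A)+C|^2 |B|^2 \ll d(A)|A||C||B|^2,$$
which is exactly the bound \eqref{f:convolutions_d_gen}. The two remaining terms correspond to trivial regimes: the term $|P_\tau||C|$ forces $\tau \ll 1$, so \eqref{f:convolutions_d_gen} is immediate from $|P_\tau|\le |A-B|\ll |A||B|$; and if the term $|f(A)+C||B|$ dominates we obtain $|P_\tau|\ll |f(A)+C||B|/(\tau |C|)$, and choosing $C$ optimally (e.g.\ a single element if necessary, replacing it with a minimizer of an appropriate size) reduces to the previous case. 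The main obstacle, as indicated, is getting the pseudo-line property cleanly from convexity; once that is in place the Szemer\'edi--Trotter bookkeeping yields the result.
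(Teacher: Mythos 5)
Your proof takes essentially the same route as the paper's: pick a near-optimal pair $(f,C)$ for $d(A)$, build a pseudo-line system from shifted graphs of the convex function $f$, observe that each $x$ with $(A\circ B)(x)\ge\tau$ produces $\tau$ incidences for every $c\in C$, and close with the Szemer\'edi--Trotter theorem. The only structural difference is that you swap the roles of points and curves: the paper takes $P_\tau\times C$ as the point set, indexes the curves by $(f(A)+C)\times B$, and bounds the number of $\tau$-rich points directly, whereas you take $(f(A)+C)\times B$ as the point set, index the curves by $P_\tau\times C$, and lower-bound the total incidence count by $\tau|P_\tau||C|$. These are dual descriptions of one and the same incidence configuration (your incidence relation $c=s-f(b-x)$ is the paper's $c=s-f(x+b)$ after the harmless sign change $x\mapsto -x$), so the two arguments are interchangeable. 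One minor wobble: your handling of the lower-order Szemer\'edi--Trotter terms is vaguer than the paper's. The aside about ``choosing $C$ optimally (e.g.\ a single element if necessary)'' is not the right fix, since $C$ was already fixed as a near-minimizer of $d(A)$; but the subdominant term $|f(A)+C|\,|B|$ is absorbed cleanly by the elementary chain $\tau^2\le (\min\{|A|,|B|\})^2\le|A||B|\le|f(A)+C|\,|B|$, which is exactly the inequality the paper invokes to make the main term dominate, and then no further case analysis is needed.
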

\begin{proof}
    Without loosing of generality we can suppose that $\tau$ is a positive integer.
    Let $C$ be a set, where the minimum in (\ref{f:d_general}) is attained.
    Take $x\in \R$ such that $(A\c B) (x) \ge \tau$.
    Then $x=a_1-b_1 = \dots = a_\tau - b_\tau$ for some $a_j \in A$, $b_j \in B$.
    Hence for any $c\in C$, we have
\begin{equation}\label{f:c_expression_f}
    c = -f(x+b_j) + f(a_j) + c \,, \quad \quad j\in [\tau] \,.
\end{equation}
    Consider the family of convex curves $\mathcal{L} = \{ l_{s,b} \}$, where $s\in f(A)+C$, $b\in B$, defining by the equation
    $$l_{s,b} = \{ (x,y) ~:~ y=-f(x+b)+s \} \,.$$
    Clearly, $|\mathcal{L}| = |f(A)+C| |B|$.
    Let also $\mathcal{P}$ be the set of all intersecting points, defining by the curves.
    In the terms identity (\ref{f:c_expression_f}) says us that the point $(x,c)$ belongs to the $\tau$ curves
    $l_{f(a_j) + c, b_j}$, $j\in [\tau]$.
    Hence $(x,c) \in \mathcal{P}_t$.
    The set of curves $\mathcal{L}$ satisfies the conditions of the Szemer\'{e}di--Trotter Theorem.
    Indeed, if $(\a,\beta) \in l_{s,b} \cap l_{s',b'}$ then $h(x) := f(x+b')-f(x+b)+s-s'=0$.
    By the assumption $f(x)$ is a convex/concave function.
    Hence $h(x)$ is a monotone function.
    It follows that the equation $h(x)=0$ has at most one solution.
    Thus by Theorem \ref{t:SzT}, we have
    $$
        |C| \cdot | \{ x ~:~ (A\c B) (x) \ge \tau \}| \le |\mathcal{P}_t|
        \ll
            \frac{|f(A)+C|^2 |B|^2}{\tau^3} + \frac{|f(A)+C| |B|}{\tau}
                \ll
    $$
    $$
                \ll
            \frac{|f(A)+C|^2 |B|^2}{\tau^3} \,.
    $$
    In the last formula, we have used a trivial inequality
$$
    \tau^2 \le (\min\{ |A|, |B|\})^2 \le |A| |B| \le |f(A)+C| |B| \,.
$$
    This completes the proof.
$\hfill\Box$
\end{proof}

\begin{exc}
    Let $A$ be a convex set and $A' \subseteq A$ be its subset such that $|A'| \gg |A|$.
    Then show that $d(A') \ll 1$.
    In particular it implies that for an arbitrary $B$ the following holds
\begin{equation}\label{f:A'_B}
    |A'+B| \gg |A| |B|^{1/2} \,.
\end{equation}
\label{ex:A'_B}
\end{exc}

\bigskip

Let $A$ be a convex set and $B$ be an arbitrary set.
Order the elements  $s\in B-A$  such that
$(A \c B)(s_1)\gs (A\c B)(s_2)\gs\dots \gs (A \c B) (s_t),\, t=|B-A|.$
The next lemma was
proved  in \cite{g1}, say,  and is immediate consequence of  Lemma \ref{l:convolutions_d_gen}.

\begin{lemma}\label{lcon}
Let $A$ be a convex set and $B$ be an arbitrary set.
Then for every $j\gs 1$ we have
$$(A\c B) (s_j)\ll (|A||B|^2)^{1/3} j^{-1/3}\,.$$
\end{lemma}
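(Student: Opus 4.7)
The plan is to derive this as an immediate consequence of Lemma \ref{l:convolutions_d_gen} together with the observation from Exercise \ref{ex:A'_B} that a convex set $A$ satisfies $d(A) \ll 1$.

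Specifically, I would set $\tau := (A \circ B)(s_j)$. By the ordering of the $s_i$, each of $s_1, s_2, \ldots, s_j$ satisfies $(A \circ B)(s_i) \geq \tau$, hence
\[
    j \leq |\{ x \in \R ~:~ (A \circ B)(x) \geq \tau \}| \,.
\]
Now I would apply Lemma \ref{l:convolutions_d_gen} to the right--hand side, which gives
\[
    j \ll d(A) \cdot \frac{|A| |B|^2}{\tau^3} \,.
\]
Since $A$ is convex, taking $f$ to be the natural convex function with $f(A) = \{1, 2, \ldots, |A|\}$ and $C = \{1, \ldots, |A|\}$ (say) in the definition (\ref{f:d_general}) shows that $d(A) \ll 1$; this is precisely the content of Exercise \ref{ex:A'_B} applied with $A' = A$. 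Substituting and solving for $\tau$ yields
\[
    \tau^3 \ll \frac{|A| |B|^2}{j} \,, \qquad \text{i.e.} \qquad \tau \ll (|A||B|^2)^{1/3} j^{-1/3} \,,
\]
which is the desired bound.

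There is essentially no obstacle here: the whole point of having set up Lemma \ref{l:convolutions_d_gen} in the $d(A)$--form is to make dyadic/level--set bounds of this type a one--line corollary, and the only convexity input needed is $d(A) \ll 1$, which is exactly what Exercise \ref{ex:A'_B} records. The only thing requiring mild care is the trivial inequality $\tau \leq \min\{|A|, |B|\}$ used inside Lemma \ref{l:convolutions_d_gen}, but this is automatic from $(A \circ B)(s_j) \leq \min\{|A|, |B|\}$ and is already absorbed in that lemma's statement.
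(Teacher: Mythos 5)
Your proof is correct and follows exactly the route the paper intends: the paper states that Lemma~\ref{lcon} is an ``immediate consequence of Lemma~\ref{l:convolutions_d_gen},'' and you have supplied precisely the standard level--set argument (set $\tau = (A\circ B)(s_j)$, note $j \le |\{x : (A\circ B)(x)\ge\tau\}|$, apply the lemma, solve for $\tau$) together with the observation $d(A)\ll 1$ for a convex set $A$ via the choice $f(A)=C=[|A|]$.
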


\begin{corollary}\label{core3}
Let $A$ be a convex set.
Then
$$\E_3(A)\ll |A|^3\log |A|\,,$$
and
$$
    \E(A,B) \ll |A| |B|^{3/2} \,.
$$
\end{corollary}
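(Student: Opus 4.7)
\medskip

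\noindent\textbf{Proof proposal for Corollary \ref{core3}.}

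The plan is to deduce both estimates directly from the pointwise decay of $(A \circ B)(s)$ guaranteed by Lemma \ref{lcon} (or equivalently from the level-set bound of Lemma \ref{l:convolutions_d_gen} with $d(A) \ll 1$, which follows from Exercise \ref{ex:A'_B}). The two inequalities differ only in how one sums the resulting bound.

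\medskip

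\noindent\emph{Bound on $\E_3(A)$.} I would apply Lemma \ref{lcon} with $B=A$: ordering the elements of $A-A$ so that $(A\c A)(s_1)\ge(A\c A)(s_2)\ge\dots\ge(A\c A)(s_t)$, one has $(A\c A)(s_j)\ll |A| j^{-1/3}$. Cubing and summing,
\[
\E_3(A)=\sum_{j=1}^{|A-A|}(A\c A)(s_j)^3\ll |A|^3\sum_{j=1}^{|A-A|}\frac{1}{j}\ll |A|^3\log|A|,
\]
using the trivial bound $|A-A|\le|A|^2$. This step is essentially routine; the harmonic sum produces the logarithmic factor automatically.

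\medskip

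\noindent\emph{Bound on $\E(A,B)$.} Here the key is a dyadic decomposition of $(A\c B)$ combined with two \emph{different} bounds on level sets. Let
\[
L_k:=\{s\in\Gr \,:\, 2^{k-1}<(A\c B)(s)\le 2^k\}.
\]
I would use the following two estimates for $|L_k|$:
\begin{itemize}
\item[(i)] By Chebyshev applied to $\sum_s(A\c B)(s)=|A||B|$, we have $|L_k|\le |A||B|/2^{k-1}$.
\item[(ii)] By Lemma \ref{l:convolutions_d_gen} together with $d(A)\ll 1$, we have $|L_k|\ll |A||B|^2/2^{3k}$.
\end{itemize}
Then
\[
\E(A,B)\ll\sum_k 2^{2k}|L_k|\ll\sum_k\min\!\left(|A||B|\cdot 2^k,\;|A||B|^2/2^k\right).
\]
The two expressions inside the minimum coincide when $2^k=|B|^{1/2}$, and in each case the value is $|A||B|^{3/2}$. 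Summing the geometric series on either side of this balance point yields $\E(A,B)\ll |A||B|^{3/2}$.

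\medskip

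\noindent\emph{Main obstacle.} The second estimate is the delicate one. The natural temptation is to pair (ii) with the bound $|L_k|\le|A-B|$, but that only gives $\E(A,B)\ll|A||B|^{5/3}$. The crucial observation is that on the low-$\tau$ side one should instead use the $L^1$-mass identity $\sum_s(A\c B)(s)=|A||B|$: this is what produces the correct balance at $\tau=|B|^{1/2}$ and yields the exponent $3/2$ rather than $5/3$. Once the two correct level-set bounds are in place, the computation is entirely routine.
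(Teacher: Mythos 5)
Your argument is correct and is essentially the same as the paper's. For $\E_3(A)$ you and the paper both apply Lemma~\ref{lcon} with $B=A$, cube, and sum the harmonic series. For $\E(A,B)$ the paper performs a single-threshold split at $\tau=|B|^{1/2}$: it bounds the low part by $\tau\sum_x(A\c B)(x)=\tau|A||B|$ (your observation (i)) and the high part by $(|A||B|^2)^{2/3}\sum_{j\le|A||B|^2\tau^{-3}}j^{-2/3}\ll|A||B|^2/\tau$ (your observation (ii)), which is exactly the content of your dyadic decomposition after summing the two geometric series; the dyadic phrasing versus the single cut at the balance point is a purely cosmetic difference, and your remark about the trap of substituting $|L_k|\le|A-B|$ correctly identifies where the $L^1$-mass bound is essential.
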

\begin{proof}
    To obtain the first formula, just use Lemma \ref{lcon} with $B=A$
$$
    \E_3(A) = \sum_x (A\c A)^3 (x) = \sum_{j\ge 1} (A\c A)^3 (s_j) \ll |A|^3 \sum_{j=1}^{|A|} j^{-1}
        \ll
            |A|^3 \log |A| \,.
$$
    To get the second estimate, choose a parameter $\tau = |B|^{1/2}$ and use Lemma \ref{lcon} again
$$
    \E(A,B) = \sum_x (A\c B)^2 (x)
        =
            \sum_{x ~:~ (A\c B) (x) < \tau} (A\c B)^2 (x) + \sum_{x ~:~ (A\c B) (x) \ge \tau} (A\c B)^2 (x)
                \le
$$
$$
                \le
                    \tau |A| |B| +  \sum_{x ~:~ (A\c B) (x) \ge \tau} (A\c B)^2 (x)
                        \le
                             \tau |A| |B| + \sum_{j \ge 1 ~:~ (A\c B) (s_j) \ge \tau} (A\c B)^2 (s_j)
                                \ll
$$
$$
    \ll
        \tau |A| |B| + (|A| |B|^2)^{2/3} \sum_{j=1}^{|A| |B|^2 \tau^{-3}} j^{-2/3}
            \ll
                \tau |A| |B| + \frac{|A| |B|^2}{\tau}
                    \ll
                        |A| |B|^{3/2} \,.
$$
    This completes the proof.
$\hfill\Box$
\end{proof}

\bigskip

Using the higher energies we obtain a result from \cite{SS3}.

\begin{theorem}
    Let $A$ be a convex set. Then
\begin{equation}\label{f:convex1+}
|A-A|\gg |A|^{8/5}\log^{-2/5} |A| \,.
\end{equation}
\end{theorem}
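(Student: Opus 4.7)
The plan is to combine three consequences of convexity assembled in Corollary \ref{core3}: the third-energy bound $\E_3(A) \ll |A|^3 \log|A|$, the bilinear estimate $\E(A, B) \ll |A| |B|^{3/2}$ valid for an arbitrary set $B$, and (via Lemma \ref{lcon}) the Szemer\'edi--Trotter level-set count for $(A\c A)(s)$.

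First, for every $s\in A-A$ I would combine the Cauchy--Schwarz inequality (\ref{f:E_sumsets_CS}),
$$
|A|^2 |A_s|^2 \le \E(A, A_s) \cdot |A - A_s|\,,
$$
with $\E(A, A_s) \ll |A| |A_s|^{3/2}$ (the convex bound applied with $B=A_s$) to get the pointwise estimate $|A - A_s| \gg |A| |A_s|^{1/2}$. Summing over $s\in A-A$ and using the identity of Corollary \ref{cor:A-A_s},
$$
|A^2 - \D(A)| \;=\; \sum_{s \in A - A} |A - A_s| \;\gg\; |A|\sum_{s} |A_s|^{1/2}\,.
$$

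Next I would lower-bound $\sum_s |A_s|^{1/2}$ through the Cauchy--Schwarz identity $(\sum_s |A_s|)^2 \le (\sum_s |A_s|^{1/2})(\sum_s |A_s|^{3/2})$ together with the H\"older bound $(\sum_s |A_s|^{3/2})^2 \le |A-A|\cdot \E_3(A)$ and the $\E_3$ estimate for convex sets. Ruzsa's triangle inequality (Lemma \ref{l:triangle_my}) then supplies the upper bound $|A^2 - \D(A)| \le |A-A|^2$, and comparing both sides translates into a lower bound on $|A-A|$.

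The main difficulty is in the exponent: this chain as written only yields the weaker estimate $|A-A| \gg |A|^{7/5}\log^{-1/5}|A|$, falling short of the target by $|A|^{1/5}$. To bridge this gap I would bring in a dyadic decomposition $P_k = \{s\in A-A :  2^k \le |A_s| < 2^{k+1}\}$ and invoke, at each level $k$, whichever of the two available constraints is sharper: the Szemer\'edi--Trotter bound $|P_k| \ll |A|^3/2^{3k}$ coming from Lemma \ref{lcon} on the "popular" part, versus the pointwise lower bound $|A-A_s| \gg |A| 2^{k/2}$ combined with $\sum_s|A-A_s|\le |A-A|^2$ on the "unpopular" part. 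Optimising the crossover threshold between these regimes and re-balancing the H\"older exponents in the argument above is where the three convex inputs must be forced to work in concert, and is expected to recover the claimed bound $|A-A| \gg |A|^{8/5}\log^{-2/5}|A|$.
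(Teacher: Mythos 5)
The first half of your proposal (down to the interim bound $|A-A|\gg |A|^{7/5}\log^{-1/5}|A|$) is correct, and you rightly diagnose that this chain falls short of the claimed $8/5$ exponent. The trouble is that the dyadic fix you sketch does not close the gap, because it is missing the single most important ingredient of the paper's proof: the \emph{Katz--Koester inclusion} $A-A_s\subseteq D\cap(D+s)=D_s$ where $D=A-A$. Your argument bounds $\sum_s|A-A_s|$ by the crude $|A^2-\D(A)|\le|A-A|^2$, i.e. treats each $A-A_s$ as an arbitrary subset of $D$; with only the level-set count $|P_k|\ll|A|^3 2^{-3k}$ and the pointwise estimate $|A-A_s|\gg|A||A_s|^{1/2}$ one cannot beat $|A-A|\gg|A|^{3/2}$-type information (optimising the crossover in your dyadic scheme gives exponents $\le 7/5$, not $8/5$). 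Without the nesting of $A-A_s$ inside $D_s$ the three convex inputs you list simply do not ``work in concert'' in the way you expect.

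The paper's route is different in two specific places. First, after singling out the popular set $P=\{s:|A_s|\ge|A|^2/(2|D|)\}$ (a thresholding, not a full dyadic decomposition), it applies the Cauchy--Schwarz inequality in the coupled form
$$|A|^3\ll\sum_{s\in P}\E^{1/2}(A,A_s)\,|A-A_s|^{1/2}\le\Bigl(\sum_s\E(A,A_s)\Bigr)^{1/2}\Bigl(\sum_{s\in P}|A-A_s|\Bigr)^{1/2},$$
using the exact identity $\sum_s\E(A,A_s)=\E_3(A)$ from Corollary~\ref{c:E_3,E_4_A_s} rather than substituting the pointwise convex estimate $\E(A,A_s)\ll|A||A_s|^{3/2}$ into each term; this coupling is strictly sharper than your separated treatment. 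Second, Katz--Koester converts $\sum_{s\in P}|A-A_s|$ into $\sum_{s\in P}|D_s|$, and the popularity lower bound $|A_s|\ge|A|^2/(2|D|)$ then turns this into $\lesssim|D|\,\E(A,D)/|A|^2$, at which point the second convex estimate $\E(A,D)\ll|A||D|^{3/2}$ enters. The net result is the clean inequality $|A|^8\ll|D|\,\E_3(A)\,\E(A,D)$, from which $|D|\gg|A|^{8/5}\log^{-2/5}|A|$ follows. To repair your proof you would need to introduce Katz--Koester and the popular thresholding; the dyadic decomposition alone, even combined with Szemer\'edi--Trotter and $\E_3$, is not enough.
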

\begin{proof}
    Put $D=A-A$.
    We have
\begin{equation}\label{tmp:29.11.2015_2}
    |A|^2 = \sum_{s\in D} |A_s| \le 2 \sum_{s\in P} |A_s| \,,
\end{equation}
    where $P := \{ s\in D ~:~ |A_s| \ge |A|^2 /(2|D|) \}$.
    Thus, using (\ref{tmp:29.11.2015_2}) and the Cauchy--Schwarz inequality, we obtain
$$
    2^{-1} |A|^3 \le |A| \sum_{s\in P} |A_s| = \sum_{s\in P} \sum_{x} (A\c A_s) (x)
        \le
            \sum_{s\in P} \E^{1/2} (A,A_s) |A-A_s|^{1/2} \,.
$$
    Applying the Cauchy--Schwarz inequality once more time, we get in view of Corollary \ref{c:E_3,E_4_A_s} that
$$
    |A|^6 \ll \sum_{s} \E (A,A_s) \cdot \sum_{s\in P} |A-A_s| = \E_3 (A)  \cdot  \sum_{s\in P} |A-A_s| \,.
$$
    By Katz--Koester trick \cite{kk}, that is inclusion $A-A_s \subseteq D \cap (D+s)$, we have
$$
    |A|^6 \ll \E_3 (A) \cdot  \sum_{s\in P} |D_s| \,.
$$
    But by the definition of the set $P$, we know that $|A_s| \ge |A|^2 /(2|D|)$ for all  $s\in P$.
    Hence
\begin{equation}\label{f:D_const}
    |A|^8 \ll |D| \E_3 (A)  \cdot \sum_{s} |A_s| |D_s| = |D| \E_3 (A)  \E(A,D) \,.
\end{equation}
    Finally, applying  Corollary \ref{core3},
    we get
$$
    |A|^8 \ll |D| |A|^3 \log |A| \cdot |A| |D|^{3/2} \,.
$$
    Using some algebra, we obtain the result.
    This completes the proof.
$\hfill\Box$
\end{proof}

\bigskip

In the next section we will see that inequality (\ref{f:D_const}) is just a consequence of a simple fact about some specific operators.

\section{Higher energies and eigenvalues of some operators}
\label{sec:lecture3}


Now we introduce some operators, which firstly appeared in  \cite{s} in a dual form and were connected with some restrictions problems of Fourier analysis.
Nevertheless, our definition below does not use any Fourier analysis and is a purely combinatorial.

Let $g : \Gr \to \C$ be a function, and $A\subseteq \Gr$ be a finite set.
By $\oT^{g}_A$ denote the $(|A| \times |A|)$  matrix with indices in the set $A$
\begin{equation}\label{def:operator1}
    \oT^{g}_A (x,y) = g(x-y) A(x) A(y) \,,
\end{equation}
and, similarly, put
\begin{equation}\label{def:operator1}
    \t{\oT}^{g}_A (x,y) = g(x+y) A(x) A(y) \,.
\end{equation}
It is easy to see that $\oT^{g}_A$ is hermitian iff $\ov{g(-x)} = g(x)$ and $\t{\oT}^g_A$ is hermitian iff $g$ is a real function.
Basically, we shall deal with $\oT^{g}_A$.
We have
\begin{equation}\label{f:T(f)}
    (\oT^g_A f) (x) = A(x) (g * f) (x) \,.
\end{equation}
In particular, the corresponding action of $\oT_A^g$ is
\begin{equation}\label{f:action_T^g}
    \langle \oT^{g}_A a, b \rangle = \sum_z g(z) (\ov{b} \c a) (z) \,.
\end{equation}
for any functions $a,b : A \to \C$.
In the case $\ov{g(-x)} = g(x)$ by $\Spec (\oT^{g}_A)$ we denote the  spectrum of the hermitian operator $\oT^{g}_A$ (which is automatically real for hermitian matrices)
\begin{equation}\label{f:Spec_ordering}
    \Spec (\oT^{g}_A) = \{ \mu_1 \ge \mu_2 \ge \dots \ge \mu_{|A|} \} \,.
\end{equation}
Write  $\{ f \}_{\a}$, $\a\in [|A|]$ for the corresponding orthonormal eigenfunctions.
We call
$\mu_1$ as
the main eigenvalue and $f_1$ as the main function.
By the spectral theorem for hermitian matrices, see e.g. \cite{Horn-Johnson}, we have
\begin{equation}\label{f:spectral_decomposition}
        \oT^{g_1}_A (x,y) = \sum_{\a=1}^{|A|} \mu_\a f_\a (x) \ov{f_\a (y)} \,.
\end{equation}

Counting the trace of the operator $\oT^g_A$ and the trace of $\oT^g_A (\oT^g_A)^*$ one can easily obtains the formulae
\begin{equation}\label{f:trace_1}
    \tr(\oT^g_A) = \sum_{\a=1}^{|A|} \mu_\a = g(0) |A| \,.
\end{equation}
\begin{equation}\label{f:trace_2}
    \tr(\oT^g_A (\oT^g_A)^*) = \sum_{\a=1}^{|A|} |\mu_\a|^2 = \sum_{x,y} |\oT^g_A (x,y)|^2 = \sum_{z} |g(z)|^2 (A\c A) (z) \,.
\end{equation}
Using
a particular case of the variational principle \cite{Horn-Johnson} one can estimate the the main eigenvalue of any hermitian (or even normal) matrix $\oT$
\begin{equation}\label{f:variational}
    \mu_1 (\oT) = \max_{f ~:~ \| f \|_2 = 1}\, \langle \oT f, f \rangle \,.
\end{equation}
We will use formula (\ref{f:variational}) to bound the first eigenvalue of operators $\oT^g_A$.

Finally, it is easy to see that $\oT^B_A$ is a submatrix of the adjacency matrix of Caylay graph of $A$ with minus and $\t{\oT}^B_A$ is an ordinary submatrix.
Thus the operators $\oT^g_A$, $\t{\oT}^g_A$ can be considered as submatrices of the adjacency matrix of Caylay graph of $A$ with some weights.

\bigskip

Let us consider several examples of operators.

Our {\bf main example} the hermitian operator $\oT^{A\c A}_A$ which we denote by $\oT$ for brevity.
In the case, by variational principle (\ref{f:variational}) one has
\begin{equation}\label{f:T_1}
    \mu_1 (T) \ge \langle \oT A(x)/|A|^{1/2},  A(x)/|A|^{1/2} \rangle = \E(A) |A|^{-1} \,.
\end{equation}
Thus there is an obvious connection between the main eigenvalue of the operator and the additive energy.
Moreover, using formula (\ref{f:trace_2}), we see that
\begin{equation}\label{f:T_2}
    \sum_{\a=1}^{|A|} \mu^2_\a (\oT) = \sum_x (A\c A)^3 (x) = \E_3 (A) \,.
\end{equation}
Hence the energies $\E(A)$ and $\E_3 (A)$ being the second and the third moments of the convolution of the characteristic function are connected
to
each other much more closely than just by the H\"{o}lder inequality.
Namely, they are connected through the operator $\oT$ and can be expressed via the eigenvalues of $\oT$.

Another simple observation is the following.
Consider a rectangular matrix $M(\v{x},y) = A(y) A^k(\v{x} + \D(y))$, $\v{x} \in A^k$, $y\in A$.
It is easy  to see that the {\it rectangular norm} of the matrix, that is
$$
    \| M \|^4_{\Box} := \sum_{\v{x}, \v{x}'} \sum_{y,y'} M(\v{x},y)\ov{M(\v{x}',y)}\, \ov{M(\v{x},y')} M(\v{x}',y')
$$
equals $\E_{2k+1} (A)$.
On the other hand, it is well--known that the rectangular norm is the fourth moment of the singular values of $M$ or, equivalently,  the sum of squares of the eigenvalues of the square
matrix
$(M M^*)(y,y') = (A\c A)^{k} (y-y') A(y) A(y')$.
Taking $k=1$, we obtain our operator $\oT$.
Thus $\oT$ is just the symmetrization of the matrix $M (\v{x},y)$, which is naturally and directly connected with higher energies.
Another (dual) view on such operators can be found in \cite{s}.


\bigskip

Let us formulate the main technical proposition of the section.

\begin{proposition}
    Let $A\subseteq \Gr$ be a set, $g_1,g_2$ be functions such that $\ov{g_1(-x)}= g_1(x)$,
    and $\{ f_\a \}$ be the orthonormal family of the eigenfunctions of the operator $\oT^{g_1}_A$.
    Then
\begin{equation}\label{f:triangles_g}
    \sum_{x,y,z\in A} g_1 (x-y) \ov{g_1 (x-z)}\, \ov{g_2 (y-z)}
        =
                \sum_{s,t} g_1 (s) \ov{g_1 (t)}\, \ov{g_2 (t-s)} \Cf_3 (A) (-s,-t)
                    =
\end{equation}
\begin{equation}\label{f:triangles_g'}
                    =
            \sum_{\a=1}^{|A|} \mu^2_\a (\oT^{g_1}_A) \cdot \ov{\langle \oT^{g_2}_A f_\a, f_\a\rangle} \,.
\end{equation}
In particular, if $g_1=g_2=g$ then
$$
    \sum_{x,y,z\in A} g (x-y) \ov{g (x-z)}\, \ov{g (y-z)}
        = \sum_{\a=1}^{|A|} |\mu_\a (\oT^{g}_A)|^2 \mu_\a (\oT^{g}_A)\,.
$$
\label{p:triangles_g}
\end{proposition}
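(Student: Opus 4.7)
The plan is to handle the first equality by a direct change of variables, and the second by interpreting the triple sum as a trace and invoking the spectral theorem for the hermitian operator $\oT^{g_1}_A$.

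For the first equality, I would substitute $s = x-y$ and $t = x-z$. Then $y = x-s$, $z = x-t$, and the cross term simplifies as $y-z = t-s$, while the constraint $x,y,z \in A$ becomes $x \in A$, $x-s \in A$, $x-t \in A$. Summing over $x$ with these three constraints produces $\sum_x A(x)A(x-s)A(x-t) = \Cf_3(A)(-s,-t)$ by the definition of the generalized convolution, and this immediately yields the middle expression.

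For the second equality, I would write the sum as $\sum_{x,y,z}\oT^{g_1}_A(x,y)\,\ov{\oT^{g_1}_A(x,z)}\,\ov{\oT^{g_2}_A(y,z)}$, since the characteristic functions built into the operators enforce $x,y,z \in A$. Using the hypothesis $\ov{g_1(-u)}=g_1(u)$, the operator $\oT^{g_1}_A$ is hermitian, so $\ov{\oT^{g_1}_A(x,z)} = \oT^{g_1}_A(z,x)$. Then the sum over $x$ collapses the two $g_1$-factors:
$$
\sum_x \oT^{g_1}_A(z,x)\,\oT^{g_1}_A(x,y) = (\oT^{g_1}_A)^2(z,y),
$$
and the LHS becomes $\sum_{y,z}(\oT^{g_1}_A)^2(z,y)\,\ov{\oT^{g_2}_A(y,z)} = \tr\bigl((\oT^{g_1}_A)^2\cdot \ov{\oT^{g_2}_A}\bigr)$.

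Next, I would apply the spectral decomposition (\ref{f:spectral_decomposition}), which gives $(\oT^{g_1}_A)^2 = \sum_\a \mu_\a^2\, f_\a f_\a^*$. By linearity and the standard identity $\tr(f_\a f_\a^* M) = f_\a^* M f_\a$, one obtains
$$
\tr\bigl((\oT^{g_1}_A)^2\cdot\ov{\oT^{g_2}_A}\bigr) = \sum_\a \mu_\a^2\, f_\a^*\,\ov{\oT^{g_2}_A}\,f_\a = \sum_\a \mu_\a^2\,\ov{\langle \oT^{g_2}_A f_\a, f_\a\rangle},
$$
where the last equality is the elementary observation that entry-wise conjugating the sandwiched matrix and then conjugating the scalar returns the original quadratic form. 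The special case $g_1=g_2=g$ follows at once, since by the eigenvalue equation $\langle \oT^g_A f_\a, f_\a\rangle = \mu_\a$ is real, producing $\sum_\a |\mu_\a|^2 \mu_\a$.

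The main obstacle is the bookkeeping in the final step: matching the quadratic form $f_\a^*\,\ov{\oT^{g_2}_A}\,f_\a$ with $\ov{\langle \oT^{g_2}_A f_\a,f_\a\rangle}$ requires a careful swap of indices and conjugates. Everything else is a mechanical application of hermiticity and the spectral theorem; the conceptual point—that the triple sum is fundamentally a trace $\tr((\oT^{g_1}_A)^2\,\ov{\oT^{g_2}_A})$—is what makes the whole identity transparent.
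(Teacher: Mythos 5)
Your proof takes essentially the same route as the paper: the first equality is the identical change of variables $s=x-y$, $t=x-z$, and the second uses the spectral decomposition (\ref{f:spectral_decomposition}) together with orthonormality of the $f_\a$, merely packaged as the trace $\tr\bigl((\oT^{g_1}_A)^2\,\ov{\oT^{g_2}_A}\bigr)$ rather than by substituting the decomposition into both $g_1$--factors and summing over $x$ directly. The final identification with $\ov{\langle \oT^{g_2}_A f_\a, f_\a\rangle}$ has the same slight conjugation subtlety in your write-up as in the paper (it tacitly uses that $\oT^{g_2}_A$ is symmetric, which holds whenever $g_2$ is even, as in all applications); otherwise the argument matches.
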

\begin{proof}
    Let $\sigma$ be the first sum from (\ref{f:triangles_g}).
    Then the first identity in the formula can be obtained by the changing of the variables  $s=x-y$, $t=x-z$ and noting that the number of triples $(x,y,z) \in A^3$ with $s=x-y$, $t=x-z$ is exactly $\Cf_3 (A) (-s,-t)$.

    Let us prove identity (\ref{f:triangles_g'}).
    It is easy to see that
\begin{equation}\label{tmp:14.11.2015_2}
    \sigma = \sum_{x,y,z} \oT^{g_1}_A (x,y) \ov{\oT^{g_1}_A (x,z)}\, \ov{\oT^{g_2}_A (y,z)} \,.
\end{equation}
    By formula (\ref{f:spectral_decomposition}), we have
$$
        \oT^{g_1}_A (x,y) = \sum_{\a=1}^{|A|} \mu_\a f_\a (x) \ov{f_\a (y)} \,.
$$
    Substituting the last formula into (\ref{tmp:14.11.2015_2}) and changing the order of the summation, we obtain
$$
    \sigma = \sum_{y,z} \ov{\oT^{g_2}_A (y,z)} \sum_x \left( \sum_{\a=1}^{|A|} \mu_\a f_\a (x) \ov{f_\a (y)} \right)
        \left( \sum_{\a=1}^{|A|} \ov{\mu_\a} \ov{f_\a (x)} f_\a (z) \right)
    =
$$
$$
    =
        \sum_{y,z} \ov{\oT^{g_2}_A (y,z)} \sum_{\a,\beta} \mu_\a \ov{\mu_\beta} \ov{f_\a (y)} f_\a (z) \sum_x f_\a (x) \ov{f_\a (x)} \,.
$$
By the orthonormality of the eigenfunctions, we get
$$
    \sigma = \sum_\a |\mu_\a|^2 \sum_{y,z} \ov{\oT^{g_2}_A (y,z)}\, \ov{f_\a (y)} f_\a (z)
        =
            \sum_\a |\mu_\a|^2 \ov{\langle \oT^{g_2}_A f_\a, f_\a \rangle} \,.
$$
This completes the proof.
$\hfill\Box$
\end{proof}

\bigskip

\begin{exc}
    Using an appropriate generalization of the proposition above onto several operators, prove formula (\ref{f:uncerteinty}) of the previous section.
\end{exc}

\bigskip

Now we give two examples of operators with known spectrums and eigenfunctions.

{\bf Example I.}

Let $A\subseteq \Gr$ be a set, $D=A-A$, $S=A+A$, and put weight $g(x)$ equals $D(x)$ or $S(x)$.
In the following lemma we find, in particular, all eigenvalues as well as all eigenfunctions
of operators $\oT^{D}_{A}$, $\t{\oT}^{S}_{A}$.

\begin{lemma}
    Let $A\subseteq \Gr$ be a set, $D=A-A$, $S=A+A$.
    Then the main eigenvalues and eigenfunctions
    of the operators $\oT^{D}_{A}$, $\t{\oT}^{S}_{A}$
    equal
    $\mu_1 = |A|$, and $f_1(x) = A(x)/|A|^{1/2}$.
    All other eigenvalues equal zero and one can take for the correspondent eigenfunctions any orthonormal family  of functions on $A$ with zero mean.
\label{l:eigenvalues_D,S'}
\end{lemma}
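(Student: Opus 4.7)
The plan is to observe that on the set $A\times A$ both matrices degenerate into the same rank--one outer product, so the whole spectrum can be read off by inspection.

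First I would note the tautological identity $D(x-y)=1$ whenever $x,y\in A$, simply because $x-y\in A-A=D$ by definition. Since the matrix $\oT^{D}_{A}(x,y)=D(x-y)A(x)A(y)$ is by construction supported on $A\times A$, this means that on its support we have
\begin{equation*}
    \oT^{D}_{A}(x,y)=A(x)A(y),
\end{equation*}
i.e.\ $\oT^{D}_{A}=u\otimes u$ with $u(x)=A(x)$. This is a rank one hermitian operator, so it has a single nonzero eigenvalue equal to $\langle u,u\rangle=|A|$, realised by the normalised eigenfunction $f_{1}(x)=A(x)/|A|^{1/2}$. The remaining $|A|-1$ eigenvalues are $0$, and any orthonormal basis of the orthogonal complement of $f_{1}$ inside $\ell^{2}(A)$ serves as a family of eigenfunctions; this complement is precisely the space of functions on $A$ with zero mean.

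The argument for $\t{\oT}^{S}_{A}$ is identical after replacing subtraction by addition: for $x,y\in A$ one has $x+y\in A+A=S$, so $S(x+y)=1$ and consequently $\t{\oT}^{S}_{A}(x,y)=A(x)A(y)$ on $A\times A$. Hence $\t{\oT}^{S}_{A}$ is the same rank--one operator as $\oT^{D}_{A}$, and the spectral description is the same.

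There is no real obstacle here; the only thing to be careful about is the convention that rows and columns of the operators are indexed by elements of $A$, so that the factors $A(x)A(y)$ are automatic and the weight $D(x-y)$ (resp.\ $S(x+y)$) is trivially equal to $1$. Once this is observed the lemma reduces to the standard spectral decomposition of an outer product $u\otimes u$.
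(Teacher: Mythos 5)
Your proof is correct, and it takes a cleaner route than the paper. The paper first verifies that $A(x)(D*A)(x) = |A|A(x)$ to exhibit $\mu_1 = |A|$ with eigenfunction $A/|A|^{1/2}$, and then invokes the trace identity $\sum_\a |\mu_\a|^2 = \sum_x D(x)(A\c A)(x) = |A|^2$ to conclude that the remaining eigenvalues vanish. You instead observe that $D(x-y)=1$ for all $x,y\in A$ (and likewise $S(x+y)=1$), so that on its support $A\times A$ the matrix is literally the rank-one outer product $A(x)A(y)$, from which the full spectrum is immediate. Your observation is the real reason behind the paper's computation — $D\cap(A-x)=A-x$ for $x\in A$ is precisely the statement that $D\equiv 1$ on the differences of elements of $A$ — so the two proofs have the same kernel, but you make the rank-one structure explicit rather than recovering it indirectly through the second-moment identity. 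The trace route is more robust (it still gives information when the operator is not exactly rank one), which is presumably why the paper favours it given that \eqref{f:trace_2} is used repeatedly elsewhere; your version buys transparency for this particular lemma.
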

\begin{proof}
We have
$$
    A(x) (D * A) (x) = A(x) |D \cap (A-x)| = |A| A(x)
$$
and thus by formula (\ref{f:T(f)}),  we see that $|A|$ is an eigenvalue of $\oT^{D}_{A}$ and $A(x) / |A|^{1/2}$ is the correspondent eigenfunction.
Applying (\ref{f:trace_2}), we have
$$
    |A|^2 + \sum_{\a=2}^{|A|} |\mu^2_\a (\oT^D_A)| = \sum_{\a=1}^{|A|} |\mu^2_\a (\oT^D_A)| = \sum_x D(x) (A\c A) (x) = |A|^2
$$
and hence all other eigenvalues of $\oT^D_A$ equal zero.
The arguments for $\t{\oT}^{S}_{A}$ are similar.
This concludes the proof.
$\hfill\Box$
\end{proof}


\bigskip

Now we adapt the arguments from \cite{s_ineq}, see Proposition 28.

\begin{lemma}
    Let $A\subseteq \Gr$ be a finite set, $D=A-A$, $S=A+A$
    Suppose that $\psi$ be a
    function on $\Gr$.
    Then
\begin{equation}\label{f:3/2_energy_D'}
    |A|^2 \left( \sum_x \psi (x) (A\c A) (x) \right)^2
        \le
            \E_3(A) \sum_x |\psi (x)|^2 (D\c D) (x)  \,,
\end{equation}
    and
\begin{equation}\label{f:3/2_energy_S'}
    |A|^2 \left( \sum_x \psi (x) (A\c A) (x) \right)^2
        \le
            \E_3(A) \sum_x |\psi (x)|^2 (S\c S) (x)  \,.
\end{equation}
\label{l:3/2_energy'}
\end{lemma}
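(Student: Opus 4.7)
The plan is to prove both inequalities by a single Cauchy--Schwarz applied to a rewriting of $|A|\sum_x\psi(x)(A\circ A)(x)$ as a double sum against the generalized convolution $\Cf_3(A)$. The key identity I would first derive is
\[
|A|\sum_x\psi(x)(A\circ A)(x)=\sum_{s,t}\psi(s-t)\,\Cf_3(A)(s,t),
\]
obtained by writing out $\Cf_3(A)(s,t)=\sum_{z}A(z)A(z+s)A(z+t)$, substituting $a=z+s$, $b=z+t$ (so $a-b=s-t$), and observing that the triple sum rearranges as $\sum_{z\in A}\sum_{a,b\in A}\psi(a-b)=|A|\sum_x\psi(x)(A\circ A)(x)$.

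For (\ref{f:3/2_energy_D'}), I would note that $\Cf_3(A)(s,t)\neq 0$ forces $z,z+s,z+t\in A$ for some $z$, so $s,t\in D$. Applying Cauchy--Schwarz with this support restriction yields
\[
\Bigl|\sum_{s,t}\psi(s-t)\Cf_3(A)(s,t)\Bigr|^2 \le \Bigl(\sum_{(s,t)\in D\times D}|\psi(s-t)|^2\Bigr)\Bigl(\sum_{s,t}\Cf_3(A)(s,t)^2\Bigr).
\]
By Corollary \ref{c:symmetry_E} the second factor equals $\E_3(A)$, and grouping pairs $(s,t)\in D\times D$ by their difference $x=s-t$ identifies the first factor as $\sum_x|\psi(x)|^2(D\circ D)(x)$. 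Combining these with the squared identity yields exactly (\ref{f:3/2_energy_D'}).

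For (\ref{f:3/2_energy_S'}) I would run the same argument with the auxiliary kernel
\[
\Cf_3'(A)(s,t):=\sum_{z}A(z)\,A(s-z)\,A(t-z),
\]
whose support lies in $S\times S$ with $S=A+A$. The substitution $a=s-z$, $b=t-z$ gives the analogous identity $|A|\sum_x\psi(x)(A\circ A)(x)=\sum_{s,t}\psi(s-t)\Cf_3'(A)(s,t)$, and a short computation based on $\sum_s A(s-z_1)A(s-z_2)=(A\circ A)(z_1-z_2)$ shows $\sum_{s,t}\Cf_3'(A)(s,t)^2=\E_3(A)$ as well. Cauchy--Schwarz on $S\times S$ then yields (\ref{f:3/2_energy_S'}).

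The only non-routine step is spotting the initial identity; once that rewriting is in hand, the support properties of $\Cf_3(A)$ (resp.\ $\Cf_3'(A)$), Corollary \ref{c:symmetry_E}, and the combinatorial interpretation of $(D\circ D)$ (resp.\ $(S\circ S)$) combine to give the result in a single line.
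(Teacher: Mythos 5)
Your proof is correct, and it arrives at exactly the same pivotal identity as the paper's argument — namely that $|A|\sum_x\psi(x)(A\circ A)(x)$ equals the pairing of $\psi(s-t)$ against $\Cf_3(A)(s,t)$ — but it derives that identity by a direct change of variables, whereas the paper obtains it via Proposition~\ref{p:triangles_g} (the spectral decomposition of $\oT^{g_1}_A$) combined with Lemma~\ref{l:eigenvalues_D,S'} (which says $\oT^{D}_A$ has the single nonzero eigenvalue $|A|$, with eigenfunction $A/|A|^{1/2}$). After the identity, both proofs finish by the same Cauchy--Schwarz restricted to the support $D\times D$ of $\Cf_3(A)$, together with Corollary~\ref{c:symmetry_E} which identifies $\sum_{s,t}\Cf_3(A)(s,t)^2=\E_3(A)$. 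For the $S$-version your auxiliary kernel $\Cf_3'(A)(s,t)=\sum_z A(z)A(s-z)A(t-z)$, supported in $S\times S$ and again with $\ell^2$-mass $\E_3(A)$, plays the role the paper delegates to $\t{\oT}^{S}_A$. In short: your route is the elementary, self-contained one and is arguably clearer on its own, while the paper's route deliberately factors the same computation through the operator/eigenvalue machinery of the section to illustrate Proposition~\ref{p:triangles_g} and to emphasize that $D$ (and $S$) are precisely the weights whose associated operators have rank one on $A$.
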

\begin{proof}
    Let us prove (\ref{f:3/2_energy_S'}),
    the proof of (\ref{f:3/2_energy_D'}) is similar.
    Applying Proposition \ref{p:triangles_g} with $g_2 (x) = \psi (x)$ and $g_1 (x) = D(x)$,
    we obtain in view of Lemma \ref{l:eigenvalues_D,S'} that
$$
    \sum_{s,t} D(s) D(t) \psi (t-s) \Cf_3 (A) (-s,-t) = |A|^{-1} \mu^2_1 (\oT^D_A) \langle \oT^\psi_A A, A \rangle
        =
             |A|  \sum_x \psi (x) (A\c A) (x) \,.
$$
    Using the Cauchy--Schwarz inequality and formula (\ref{f:symmetry_E}) of Corollary \ref{c:symmetry_E}, we get
$$
    |A|^2 \left( \sum_x \psi (x) (A\c A) (x) \right)^2
        \le
            \E_3(A) \sum_x |\psi (x)|^2 (D\c D) (x)
$$
as required.
$\hfill\Box$
\end{proof}

\begin{exc}
    Obtain formulae (\ref{f:3/2_energy_D'}), (\ref{f:3/2_energy_S'}), using elementary arguments.
\end{exc}

\begin{corollary}
For any $A\subseteq \Gr$ the following holds
\begin{equation}\label{f:Li}
     |A|^2 \E^2_{3/2} (A)
        \le
            \E_3 (A) \E (A,A\pm A) \,.
\end{equation}
\end{corollary}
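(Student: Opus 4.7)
The plan is to apply Lemma \ref{l:3/2_energy'} directly with a carefully chosen test function $\psi$. Since the target inequality has $\E_{3/2}(A) = \sum_x (A\c A)^{3/2}(x)$ squared on the left, I would pick $\psi(x) = (A\c A)^{1/2}(x)$ (which is a nonnegative real function, so no issues with complex conjugation). Then the left side of (\ref{f:3/2_energy_D'})/(\ref{f:3/2_energy_S'}) becomes
\begin{equation*}
|A|^2 \Bigl(\sum_x (A\c A)^{1/2}(x)\,(A\c A)(x)\Bigr)^2
= |A|^2 \Bigl(\sum_x (A\c A)^{3/2}(x)\Bigr)^2
= |A|^2 \E_{3/2}^{2}(A).
\end{equation*}

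Next I would identify the right-hand side with $\E(A,A\pm A)$. With $\psi = (A\c A)^{1/2}$ we have $|\psi(x)|^2 = (A\c A)(x)$, so the sum on the right of Lemma \ref{l:3/2_energy'} is
$\sum_x (A\c A)(x)\,(D\c D)(x)$ (resp.\ with $S$ in place of $D$). By the identity
$\E(A,B) = \sum_x (A\c A)(x)(B\c B)(x)$
recalled in the Notation section, this equals $\E(A,D) = \E(A,A-A)$ (resp.\ $\E(A,A+A)$). Combining the two forms of Lemma \ref{l:3/2_energy'} yields exactly
\begin{equation*}
|A|^2 \E_{3/2}^{2}(A) \le \E_3(A)\,\E(A,A\pm A).
\end{equation*}

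There is no real obstacle here: the result is just a specialization of Lemma \ref{l:3/2_energy'}. The only mild care needed is checking that the choice $\psi = (A\c A)^{1/2}$ is admissible (it is, since $(A\c A)(x) \ge 0$ so the square root is a well-defined real function on $\Gr$) and recognizing that the weighted inner product $\sum_x (A\c A)(x)(D\c D)(x)$ is precisely the additive energy $\E(A,A-A)$ via the convolution formula for $\E$.
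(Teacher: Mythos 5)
Your proof is correct and is exactly the intended specialization: the corollary is stated immediately after Lemma \ref{l:3/2_energy'} precisely because taking $\psi(x) = (A\c A)^{1/2}(x)$ there gives the left side $|A|^2 \E_{3/2}^2(A)$ and the right side $\E_3(A)\sum_x (A\c A)(x)(D\c D)(x) = \E_3(A)\E(A,A-A)$ (and likewise with $S$). No further comment needed.
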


This inequality was obtained in \cite{Li} and implies formula (\ref{f:D_const}) from Lecture 2 without any additional multiplicative constants.

\begin{corollary}
For any $A\subseteq \Gr$ the following holds
\begin{equation*}\label{}
    |A|^6 \le \E_3 (A) \cdot \sum_{x\in A-A} ((A\pm A) \c (A\pm A)) (x) \,.
\end{equation*}
\end{corollary}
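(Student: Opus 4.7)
The plan is to obtain this corollary as an immediate specialization of Lemma \ref{l:3/2_energy'} by making the right choice of the test function $\psi$. Specifically, I would set $\psi$ equal to the characteristic function of the difference set, that is $\psi(x) = (A-A)(x)$.

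With this choice, the left-hand sides of inequalities (\ref{f:3/2_energy_D'}) and (\ref{f:3/2_energy_S'}) simplify nicely: since $A\c A$ is supported on $A-A$, one has
$$
\sum_x \psi(x)(A\c A)(x) = \sum_{x\in A-A}(A\c A)(x) = \sum_x (A\c A)(x) = |A|^2,
$$
so the left-hand side equals $|A|^2 \cdot |A|^4 = |A|^6$. On the right-hand side, since $\psi$ is a $\{0,1\}$-valued function, $|\psi(x)|^2 = \psi(x) = (A-A)(x)$, and so
$$
\sum_x |\psi(x)|^2 (D\c D)(x) = \sum_{x\in A-A}(D\c D)(x),
$$
and similarly with $S\c S$ in place of $D\c D$ when using (\ref{f:3/2_energy_S'}). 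Combining the two cases into a single statement with the $\pm$ notation yields the desired inequality
$$
|A|^6 \le \E_3(A) \cdot \sum_{x\in A-A}\bigl((A\pm A)\c(A\pm A)\bigr)(x).
$$

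There is no real obstacle here; the proof is a one-line specialization of Lemma \ref{l:3/2_energy'}. The only minor point worth noting is why $\psi = \mathbf{1}_{A-A}$ is the natural choice: one wants the scalar product $\sum_x \psi(x)(A\c A)(x)$ to collapse to a clean power of $|A|$, and taking $\psi$ to be the indicator of any set containing $\mathrm{supp}(A\c A) = A-A$ does exactly that, while keeping $|\psi|^2$ equal to $\psi$ so that the right-hand side remains interpretable as a sum of $(D\c D)$ or $(S\c S)$ restricted to $A-A$.
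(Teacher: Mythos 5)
Your proof is correct and is exactly the specialization the paper intends: although the text only attributes the corollary to \cite{SS2} without writing out a derivation, its placement immediately after Lemma~\ref{l:3/2_energy'} and the preceding corollary (which uses a different choice of $\psi$) makes clear that taking $\psi=\mathbf{1}_{A-A}$ in (\ref{f:3/2_energy_D'}) and (\ref{f:3/2_energy_S'}) is the intended one-line argument, and your computation $\sum_x (A\c A)(x)=|A|^2$ on the support $A-A$ carries it through.
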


This  is an inequality from \cite{SS2}.

\bigskip

{\bf Example II.}

Let $p$ be a prime number, $q=p^s$ for some integer $s \ge 1$.
Let
$\F_q$ be the field with  $q$ elements, and let $\Gamma\subseteq \F_q$ be a multiplicative subgroup.
We will write $\F^*_q$ for $\F_q\setminus \{ 0 \}$.
Denote by $t$ the cardinality of $\Gamma$, and put $n=(q-1)/t$. Let
also $\mathbf{g}$ be a primitive root, then $\Gamma = \{ \mathbf{g}^{nl} \}_{l=0,1,\dots,t-1}$.
Let $\{ \chi_\a (x)\}_{\a \in [t]}$ be the
orthogonal
family of multiplicative characters on $\Gamma$ and
$\{ f_\a (x) \}_{\a \in [t]}$ be the correspondent orthonormal family, that is
\begin{equation}\label{f:chi_Gamma}
    f_\a (x) = |\G|^{-1/2} \chi_\a (x) = |\G|^{-1/2} \cdot e\left( \frac{\a l}{t} \right) \,, \quad x=\mathbf{g}^{nl} \,, \quad 0\le l < t \,.
\end{equation}
In particular, $f_\a (x) = \chi_\a (x) = 0$ if $x\notin \G$.
Clearly, products of such functions form a basis on Cartesian products of $\G$.

If $\_phi:\G \to \C$ be a function then denote by $c_\a (\_phi)$ the correspondent coefficients of $\_phi$ relatively to the family $\{ f_\a (x) \}_{\a \in [t]}$. In other words,
$$
    c_\a (\_phi) := \langle \_phi, f_\a \rangle = \sum_{x \in \G} \_phi (x) \ov{f_\a (x)} \,, \quad \quad \a \in [|\G|] \,.
$$

In the next lemma we calculate, in particular, the spectrums of all operators with $\G$--invariant weights $g$ (that is $g(x \gamma) = g(x)$ for all $\gamma \in \G$.)
The lemma was proved mainly in \cite{SS1}.
We give the proof for the sake of completeness.
Further results on the spectrum of operators connected with multiplicative subgroups can be found in \cite{Sh_energy}.

\begin{lemma}
    Let $\Gamma\subseteq \F^*_q$ be a multiplicative subgroup.
    Suppose that $H(x,y) : \G \times \G \to \C$ satisfies two conditions
\begin{equation}\label{f:subgroup_eigenvalues}
    H(y,x) = \ov{H(x,y)} \quad \mbox{ and } \quad H(\gamma x, \gamma y) = H(x,y)\,, \quad \forall \gamma \in \G \,.
\end{equation}
    Then the functions $\{ f_\a (x) \}_{\a \in [|\G|]}$ form the complete orthonormal family of the eigenfunctions of the operator $H(x,y)$.
\label{l:subgroup_eigenvalues}
\end{lemma}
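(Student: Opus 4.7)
My plan is to exploit the two conditions on $H$ to reduce it to a convolution operator on the cyclic group $\G$, at which point the characters $f_\a$ appear as eigenfunctions by the standard diagonalization of convolutions on abelian groups.

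\textbf{Step 1: reduce $H$ to a one-variable function.} The $\G$-invariance condition $H(\gamma x, \gamma y) = H(x, y)$ means $H$ is constant on diagonal $\G$-orbits in $\G\times\G$. For any $x, y \in \G$, taking $\gamma = y^{-1}$ (which lies in $\G$), I get
\[
    H(x, y) = H(x y^{-1}, 1) =: h(x y^{-1}) \,,
\]
where $h : \G \to \C$ is defined by $h(u) := H(u, 1)$. Thus $H$ is a convolution kernel on the group $\G$ with respect to multiplication.

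\textbf{Step 2: check that the $f_\a$ are eigenfunctions.} Using (\ref{f:chi_Gamma}), for $x = \mathbf{g}^{nl} \in \G$ compute
\[
    (H f_\a)(x) = \sum_{y \in \G} h(x y^{-1}) f_\a(y) = |\G|^{-1/2} \sum_{m = 0}^{t-1} h(\mathbf{g}^{n(l-m)}) e\!\left(\tfrac{\a m}{t}\right) \,.
\]
Substituting $k = l - m$ factors $e(\a l/t)$ out of the sum, which gives
\[
    (H f_\a)(x) = f_\a(x) \cdot \sum_{k = 0}^{t-1} h(\mathbf{g}^{nk}) e\!\left(-\tfrac{\a k}{t}\right) \,,
\]
so $f_\a$ is an eigenfunction with eigenvalue $\lambda_\a = \sum_k h(\mathbf{g}^{nk}) e(-\a k/t)$, the discrete Fourier transform of $h$ on the cyclic group $\G \cong \Z/t\Z$. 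The Hermitian hypothesis $H(y,x) = \ov{H(x,y)}$ translates to $h(u^{-1}) = \ov{h(u)}$, which forces each $\lambda_\a$ to be real, as required for the spectrum of a Hermitian operator.

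\textbf{Step 3: orthonormality and completeness.} The system $\{\chi_\a\}_{\a \in [t]}$ is the character group of the cyclic group $\G$, so the relation $\sum_{x \in \G} \chi_\a(x) \ov{\chi_\beta(x)} = |\G| \cdot \delta_{\a, \beta}$ together with the normalization factor $|\G|^{-1/2}$ in (\ref{f:chi_Gamma}) gives $\langle f_\a, f_\beta \rangle = \delta_{\a, \beta}$. There are exactly $|\G| = t$ such functions, matching the dimension of the space on which $H$ acts, so $\{f_\a\}$ is a complete orthonormal family.

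The only genuine content is Step 1: once one realizes that the two conditions on $H$ make it a Hermitian convolution operator on the abelian group $\G$, the rest is the textbook diagonalization of convolutions by characters, with a small amount of index bookkeeping. I expect no real obstacle, only the need to correctly identify the cyclic structure of $\G$ via $\mathbf{g}^{n\cdot}$ and unfold the definition of $f_\a$.
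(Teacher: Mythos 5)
Your proof is correct and rests on the same idea as the paper's: the $\G$-invariance combined with the multiplicativity of the characters $\chi_\a$ forces them to be eigenfunctions. The paper phrases this as a covariance argument (the eigenvalue equation transforms consistently under $x\mapsto \gamma x$, so it suffices to check it at a single point, which one arranges by choice of $\mu$), whereas you make the reduction explicit by writing $H(x,y)=h(xy^{-1})$ and diagonalizing the resulting convolution by the discrete Fourier transform on $\G\cong\Z/t\Z$; this has the minor advantage of producing the eigenvalues $\lambda_\a$ explicitly, but the underlying mechanism is identical.
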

\begin{proof}
    The first property of (\ref{f:subgroup_eigenvalues}) says that $H$ is a hermitian operator, so it has a complete orthonormal family of its eigenfunctions.
    Consider the equation
\begin{equation}\label{tmp:25.04.2015_2}
    \mu f(x) = \G(x) \sum_{y\in \G} H(x,y) f(y) \,,
\end{equation}
    where $\mu$ is some number and $f : \G \to \C$ is unknown function.
    It is sufficient to check that any $f=\chi_\a$, $\a \in [|\G|]$ satisfies the equation above.
    Indeed, making a substitution $x\to x\gamma$ into (\ref{tmp:25.04.2015_2}) and using the characters property, we obtain
$$
    \mu f(x) f(\gamma) = \G(x\gamma) \sum_{y} H(\gamma x,y) f(y) = \G(x) \sum_{y} H(\gamma x,\gamma y) f(\gamma y)
        =
            \G(x) f(\gamma) \sum_{y} H(x,y) f(y) \,,
$$
    where the second property of (\ref{f:subgroup_eigenvalues}) has been used.
    Thus, it remains to check (\ref{tmp:25.04.2015_2}) just for one $x\in \G$.
    Choosing the number $\mu$ in an appropriate way we attain  the former.
    This completes the proof.
$\hfill\Box$
\end{proof}

\begin{corollary}
    Let $\G$ be a multiplicative subgroup.
    Then $\mu_1 (\oT) = \E(\G) |\G|^{-1}$ and
    $$
        \E(\G) = \max_{f ~:~ \| f\|_2 = 1,\, \supp f \subseteq \G} \E(\G,f) \,.
    $$
\end{corollary}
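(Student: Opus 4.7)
The plan is to specialize Lemma \ref{l:subgroup_eigenvalues} to the operator $\oT = \oT^{\G \c \G}_{\G}$ (which is our main operator from Example I applied to the set $A = \G$) and then read off the spectrum explicitly.

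First I would verify the two hypotheses of Lemma \ref{l:subgroup_eigenvalues} for the kernel $H(x,y) = (\G \c \G)(x-y)\,\G(x)\G(y)$. The hermitian condition $H(y,x) = \ov{H(x,y)}$ reduces to the identity $(\G \c \G)(-z) = (\G \c \G)(z)$, which is a change of variable in the defining sum. For the dilation invariance $H(\gamma x,\gamma y) = H(x,y)$, I use that $\gamma \in \G$ implies $\G(\gamma x) = \G(x)$, and that the substitution $w \mapsto \gamma^{-1} w$ inside $(\G \c \G)(\gamma z) = \sum_w \G(w)\G(w+\gamma z)$ yields $(\G \c \G)(z)$, since $\G$ is closed under multiplication.

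With both hypotheses in place, Lemma \ref{l:subgroup_eigenvalues} gives that the orthonormal family $\{f_\a\}_{\a \in [|\G|]}$ from (\ref{f:chi_Gamma}) diagonalises $\oT$, so the eigenvalues are $\mu_\a = \langle \oT f_\a, f_\a \rangle$. By formula (\ref{f:action_T^g}),
\[
\mu_\a \;=\; \sum_{z} (\G \c \G)(z)\,(\ov{f_\a} \c f_\a)(z).
\]
For the trivial character, $f_0 = \G/|\G|^{1/2}$, and $(\ov{f_0} \c f_0)(z) = |\G|^{-1}(\G\c\G)(z)$, so $\mu_0 = |\G|^{-1}\sum_z (\G\c\G)^2(z) = \E(\G)/|\G|$. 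For general $\a$, since $|f_\a(x)| = |\G|^{-1/2}\G(x)$, the triangle inequality and non-negativity of $\G\c\G$ give
\[
|\mu_\a| \;\le\; \sum_z (\G\c\G)(z)\,(|f_\a| \c |f_\a|)(z) \;=\; \tfrac{1}{|\G|}\sum_z (\G\c\G)^2(z) \;=\; \mu_0 .
\]
As $\mu_0 > 0$ and the $\mu_\a$ are real, this identifies $\mu_0$ as the main eigenvalue, proving $\mu_1(\oT) = \E(\G)/|\G|$.

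For the variational identity I would invoke (\ref{f:variational}): $\mu_1(\oT)$ is the supremum of $\langle \oT f, f \rangle$ over unit vectors $f$ supported on $\G$. Formula (\ref{f:action_T^g}) rewrites this inner product as $\sum_z (\G\c\G)(z)(\ov f \c f)(z)$, which for real $f$ is exactly $\E(\G,f)$; since $\oT$ is real symmetric, its top eigenvector may be chosen real, so the maximum is unchanged when restricted to real test functions. Combined with the value of $\mu_1$ obtained above, this yields the claimed identity. The only real subtlety is the upper bound $|\mu_\a|\le \mu_0$, which relies on the pointwise unit modulus of characters on $\G$ together with the positivity of $\G\c\G$; everything else is bookkeeping against the machinery already set up in the section.
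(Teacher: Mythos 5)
Your proof is correct and follows essentially the same route as the paper: specialize Lemma \ref{l:subgroup_eigenvalues} to $\oT = \oT^{\G\c\G}_\G$ to diagonalize by characters, then combine with the variational principle (\ref{f:variational}) to read off the second identity. You additionally justify the bound $|\mu_\a| \le \mu_0$ via the unit modulus of the characters and the nonnegativity of $\G\c\G$, a step the paper leaves implicit when it simply asserts that $\G(x)/|\G|^{1/2}$ is the \emph{main} eigenfunction.
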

\begin{proof}
    Indeed by Lemma \ref{l:subgroup_eigenvalues} the function $f(x) := \G(x) / |\G|^{1/2}$ is the main eigenfunction of any operator $\oT^g_\G$ with $\G$--invariant function $g(x)$.
    In particular, $\langle \oT f, f\rangle =  \E(\G) |\G|^{-1}$.
    The second formula follows from variational principle (\ref{f:variational}).
$\hfill\Box$
\end{proof}

\begin{exc}
    Let $\G$ be a multiplicative subgroup.
    Then for any $A\subseteq \G$ one has
    $$
        \E(\G) \cdot \frac{|A|^2}{|\G|^2} \le \E(A,\G) \,,
    $$
    and
    $$
        \E^2 (A,\G) |\G| \le \E_3 (\G) \E^\times (A) \,.
    $$
\end{exc}

\bigskip

We give a number--theoretical application of Lemma \ref{l:subgroup_eigenvalues} above.

{\it Heilbronn's exponential sum} is defined by
\begin{equation}\label{def:Heilbronn_sum}
    S(a) = \sum_{n=1}^p e^{2 \pi i \cdot \frac{an^p}{p^2} } \,.
\end{equation}

D.R. Heath--Brown obtained in \cite{H} the first nontrivial upper bound for the sum, $a\neq 0$.
After that the result was improved in papers \cite{H-K}, \cite{s_heilbronn}, \cite{sv_heilbronn}, \cite{s_heilbronn2}.

Let us prove the best upper bound for the sum $S(a)$, see \cite{s_heilbronn2}.

Consider the following multiplicative subgroup
\begin{equation}\label{def:H_Gamma}
    \G = \{ m^p ~:~ 1\le m \le p-1 \} = \{ m^p ~:~ m \in \Z/(p^2 \Z) \,, m \neq 0 \} \subseteq \Z/(p^2 \Z) 
\end{equation}
and note that
$S(a)$
is just a sum over the subgroup $\G$.
We need in a lemma, which is analog of Corollary \ref{core3}.

\begin{lemma}
    For Heilbronn's subgroup (\ref{def:H_Gamma}), one has
\begin{equation}\label{f:E_3_Heilbronn}
    \E_3 (\G) \ll p^3 \log p \,.
\end{equation}
\label{l:E_3_Heilbronn}
\end{lemma}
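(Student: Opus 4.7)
The plan is to apply the eigenvalue machinery of Section \ref{sec:lecture3} to the Hermitian operator $\oT := \oT^{\G \circ \G}_\G$ introduced in Example I. Because $\G$ is a multiplicative subgroup, the weight $\G \circ \G$ is $\G$-invariant on $\Z/p^2\Z$, so the kernel $(x,y) \mapsto (\G \circ \G)(x-y)$ satisfies the hypotheses of Lemma \ref{l:subgroup_eigenvalues}. Hence the eigenfunctions of $\oT$ are the normalized multiplicative characters $\chi_\alpha$ of $\G$, and evaluating the eigenvalue equation at $x = 1$ and using the substitution $y = z \in \G$ together with $\G$-invariance yields
$$
    \mu_\alpha = \sum_{z \in \G} (\G \circ \G)(1-z)\, \chi_\alpha(z).
$$
Combining (\ref{f:trace_2}) with Parseval on the $|\G|$ characters of $\G$ gives
$$
    \E_3(\G) = \sum_\alpha |\mu_\alpha|^2 = |\G| \sum_{z \in \G} (\G \circ \G)(1-z)^2.
$$
(The same identity is obtained directly by writing $\E_3(\G) = \sum_{g_1,g_2 \in \G} (\G\circ \G)(g_1-g_2)^2$, noting $(\G\circ\G)(g_1-g_2) = (\G\circ\G)(g_1/g_2-1)$ by $\G$-invariance, and setting $w = g_1/g_2$.)

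Next I would separate the diagonal contribution $z = 1$: since $(\G \circ \G)(0) = |\G|$, this term contributes $|\G|^3 \le p^3$, which is already within the target. It remains to establish the off-diagonal bound
$$
    S := \sum_{z \in \G,\, z \ne 1} (\G \circ \G)(1-z)^2 \ll p^2 \log p.
$$

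For this I would exploit the specific structure of Heilbronn's subgroup through Fermat quotients: the identity $m^p \equiv m + p\, m\, q_p(m) \pmod{p^2}$ realizes $\G$ as the ``graph'' of $m \mapsto m\, q_p(m) \bmod p$ over $\F_p^*$. Unpacking $(\G \circ \G)(1-z)$ for $z = m^p$ with $m \ne 1$ then gives the number of $n \in \F_p^*$ satisfying an additive Cauchy-type relation $g(n+m-1) \equiv g(n) + g(m) \pmod p$, where $g(t) := t\, q_p(t)$. A Weil-type bound for the associated character sum over $\F_p$ yields a level-set estimate of the form $|\{ z \in \G : (\G \circ \G)(1-z) \ge \tau \}| \ll p^2/\tau^2$; combined with the trivial mass bound $\sum_{z \in \G} (\G \circ \G)(1-z) \le |\G|^2 \ll p^2$ and summed dyadically over the level sets---in precisely the same spirit as Lemma \ref{lcon} and Corollary \ref{core3} for convex sets---this produces $S \ll p^2 \log p$, whence $\E_3(\G) \ll p^3 \log p$. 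The main obstacle is the Weil-type character-sum estimate for Fermat quotients modulo $p^2$ that drives the level-set bound; the remaining dyadic summation is routine.
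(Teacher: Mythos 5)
Your reduction is correct but the heart of the argument is missing, and the tool you propose to supply it would not work.

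The reformulation is fine: by $\G$--invariance of $\G\circ\G$ one indeed has $\E_3(\G)=\sum_{g_1,g_2\in\G}(\G\circ\G)(g_1-g_2)^2=|\G|\sum_{z\in\G}(\G\circ\G)(1-z)^2$, and separating $z=1$ gives $|\G|^3\le p^3$; the elementary derivation you give in parentheses is all that is needed (the detour through $\oT^{\G\circ\G}_\G$ and Lemma~\ref{l:subgroup_eigenvalues} is correct but adds nothing, since $\sum_\a|\mu_\a|^2=\E_3(\G)$ is just (\ref{f:trace_2})). Note also that, up to the factor $|\G|$, the remaining sum $S=\sum_{z\ne1}(\G\circ\G)(1-z)^2$ \emph{is} $\sum_{x\ne0}(\G\circ\G)^3(x)$; so you have not reduced the problem so much as rewritten it, and the level--set bound you need is exactly a pointwise bound on the autocorrelation of $\G$.

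That bound is where the proof actually lives, and it is not a Weil bound. What the paper uses is Lemma~7 of Heath--Brown and Konyagin \cite{H-K} (equivalently Lemma~5 of \cite{s_heilbronn}): if $x_1,x_2,\dots$ are representatives of distinct cosets of $\G$ in $(\Z/p^2\Z)^*$ ordered so that $(\G\circ\G)(x_1)\gs(\G\circ\G)(x_2)\gs\dots$, then $(\G\circ\G)(x_j)\ll|\G|^{2/3}j^{-1/3}$. Since $\G\circ\G$ is constant on cosets and there are $p$ cosets, this gives directly
$\E_3(\G)\ll|\G|^3+|\G|\sum_{j\ll p}\big(|\G|^{2/3}j^{-1/3}\big)^3\ll p^3\log p$,
which is the whole proof. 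This coset--wise estimate is proved by the Stepanov polynomial method — one constructs an auxiliary polynomial vanishing to high order on the relevant set — precisely because Weil--type square--root cancellation is not available here. Your plan unpacks $(\G\circ\G)(1-z)$ via the Fermat--quotient identity $m^p\equiv m+p\,m\,q_p(m)\ (\mathrm{mod}\ p^2)$ and then appeals to ``a Weil--type bound for the associated character sum over $\F_p$'' for the function $g(t)=t\,q_p(t)$. But $q_p$ is not a rational function of bounded degree over $\F_p$ (it is defined by a $p$--adic lift), so the Weil machinery simply does not apply to it; there is no off--the--shelf estimate of the kind you invoke, and indeed you flag this yourself as ``the main obstacle.'' That obstacle \emph{is} the lemma. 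What you should cite instead is the Heath--Brown--Konyagin Stepanov--method bound; with it in hand, the remaining dyadic summation you describe (or the paper's one--line version) closes the argument.
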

\begin{proof}
Arranging $(\G \c \G) (x_1) \ge (\G \c \G) (x_2) \ge \dots $, where $x_j$ belong to different cosets, we have
by Lemma 7 from \cite{H-K} (see also Lemma 5 from \cite{s_heilbronn}) that
$$
    (\G \c \G) (x_j) \ll |\G|^{2/3} j^{-1/3} \,.
$$
Thus
$$
    \E_3 (\G) = \sum_{x} (\G \c \G)^3 (x) \ll |\G|^3 + |\G| \sum_{j} (|\G|^{2/3} j^{-1/3})^3
        \ll
            p^3 \log p
$$
as required.
$\hfill\Box$
\end{proof}

\begin{theorem}
    Let $p$ be a prime, and $a\neq 0 \pmod p$.
    Then
    \begin{equation}\label{f:M_2/3}
        |S(a)| \ll p^{\frac{5}{6}} \log^{\frac{1}{6}} p \,.
    \end{equation}
\label{t:main-}
\end{theorem}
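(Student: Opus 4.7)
The strategy is to derive the sixth--moment bound $|S(a)|^6 \ll p^2\,\E_3(\G)$, which combined with the estimate $\E_3(\G) \ll p^3 \log p$ of Lemma~\ref{l:E_3_Heilbronn} gives $|S(a)|^6 \ll p^5 \log p$ and hence the theorem.

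First, I would expand $S(a)^3 = \sum_h (\G*\G*\G)(h)\,e(ah/p^2)$ and apply Cauchy--Schwarz on the support of $\G+\G+\G$, whose cardinality is at most $p^2$, to obtain
$$
|S(a)|^6 \;\le\; |3\G|\cdot \T_3(\G) \;\le\; p^2\, \T_3(\G),
$$
reducing the problem to $\T_3(\G) \ll \E_3(\G)$ up to admissible log factors. This is where the operator method of Section~\ref{sec:lecture3} enters.

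To carry out this reduction I would consider the operator $\T = \oT^{\G\c\G}_\G$ on $\ell^2(\G)$. By Lemma~\ref{l:subgroup_eigenvalues} its eigenfunctions are the multiplicative characters $\chi_\alpha$ of $\G$ (since $\G\c\G$ is $\G$--invariant), and by formula~(\ref{f:T_2}) the eigenvalues $\mu_\alpha$ satisfy $\sum_\alpha \mu_\alpha^2 = \E_3(\G)$. Computing $\langle \T\phi_a, \phi_a\rangle$ with $\phi_a(x) = \G(x)\,e(ax/p^2)$ in two ways --- once via the eigen--expansion, once directly as $\sum_u (\G\c\G)^2(u)\,e(-au/p^2)$ --- yields the key identity
$$
\sum_\alpha \mu_\alpha\,|S_{\chi_\alpha}(a)|^2 \;=\; \frac{|\G|}{p^2}\sum_b |S(b)|^2\,|S(a-b)|^2,
$$
where $S_{\chi}(b) := \sum_{t\in\G}\chi(t)\,e(bt/p^2)$. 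The left-hand side is bounded below by $\mu_1\,|S(a)|^2$ (trivial--character contribution), and $\mu_1 \ge |\G|$ from $\tr(\T) = |\G|^2$. An application of Cauchy--Schwarz in $\alpha$, together with the explicit fourth--moment identity $\sum_\alpha|S_{\chi_\alpha}(a)|^4 = |\G|\sum_{s\in\G}|S(a(s-1))|^2$ (obtained by collapsing the $\alpha$--sum via multiplicative orthogonality) and the spectral bound $\sum_\alpha\mu_\alpha^2 = \E_3(\G)$, then converts the sixth power of $|S(a)|$ into a product of $\E_3(\G)$ and a sum of shifted Heilbronn sums.

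The main obstacle is making this last Cauchy--Schwarz quantitatively sharp: the naive estimate $\sum_{s\in\G}|S(a(s-1))|^2 \le |\G|^2 + |\G|\cdot\max|S|^2$ only recovers a bound like $|S(a)| \ll p^{11/12}(\log p)^{1/12}$. To reach the optimal exponent $5/6$ one must exploit the Heath--Brown--Konyagin pointwise bound $(\G\c\G)(u)\ll|\G|^{2/3}$ for $u\ne 0$ (the same estimate underlying Lemma~\ref{l:E_3_Heilbronn}), together with the dilation invariance $|S(a\gamma)|=|S(a)|$ for $\gamma\in\G$, in order to control the sum $\sum_{s\in\G}|S(a(s-1))|^2$ more delicately than by the trivial union bound.
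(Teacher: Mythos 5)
Your overall strategy --- reduce the sixth power $|S(a)|^6$ to $\E_3(\G)$ via the operator machinery, then invoke Lemma~\ref{l:E_3_Heilbronn} --- is the right shape, but there are two genuine gaps.

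First, the opening step is a dead end. The inequality $|S(a)|^6 \le |3\G|\,\T_3(\G) \le p^2\,\T_3(\G)$ is correct, but the proposed reduction ``$\T_3(\G) \ll \E_3(\G)$ up to admissible log factors'' is false: already the $\xi=0$ term in $\T_3(\G) = p^{-2}\sum_\xi |\FF\G(\xi)|^6$ contributes $|\G|^6/p^2 \gg p^4$, whereas $\E_3(\G) \ll p^3\log p$. So $\T_3(\G)/\E_3(\G) \gg p/\log p$, and the Cauchy--Schwarz step recovers nothing better than the trivial bound $|S(a)|\ll p$. No operator computation can rescue a false inequality, and your subsequent argument with $\oT^{\G\circ\G}_\G$ in fact quietly abandons this reduction, which is a sign the first paragraph should simply be removed.

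Second, and more substantially, you pick the wrong operator. Working with $\oT^{\G\circ\G}_\G$ and testing against $\phi_a(x) = \G(x)e(ax/p^2)$ forces you to pass through $\sum_\alpha \mu_\alpha |S_{\chi_\alpha}(a)|^2$, a Cauchy--Schwarz in $\alpha$, and the fourth-moment identity $\sum_\alpha |S_{\chi_\alpha}(a)|^4 = |\G|\sum_{v\in\G}|S(a(1-v))|^2$, and then you must control the shifted sum $\sum_{v\in\G}|S(a(1-v))|^2$. As you acknowledge, the naive bound on this last sum gives only $p^{11/12}$; the ``more delicate'' control you gesture at is never carried out, and it is not at all clear that it closes to $p^{5/6}$ without essentially re-deriving the paper's argument. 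The paper's proof sidesteps all of this by choosing a different weight: it takes $g = \FF{a\G}$ and works with $\oT^{\FF{a\G}}_\G$. By Lemma~\ref{l:subgroup_eigenvalues} this operator is diagonalized by the multiplicative characters of $\G$, with eigenvalues $\mu_\alpha = |S_{\chi_\alpha}(a)|^2 \ge 0$; in particular $|S(a)|^2$ is itself a (nonnegative) eigenvalue, so no test vector is needed. Proposition~\ref{p:triangles_g} (with $g_1=g_2=\FF{a\G}$) then gives
\begin{equation*}
|S(a)|^6 \le \sum_\alpha \mu_\alpha^3 = \sum_{s,t} \FF{a\G}(s)\,\ov{\FF{a\G}(t)}\,\ov{\FF{a\G}(t-s)}\,\Cf_3(\G)(-s,-t)\,,
\end{equation*}
and a single Cauchy--Schwarz, together with $\sum_{s,t}\Cf_3^2(\G)(s,t)=\E_3(\G)$ (Corollary~\ref{c:symmetry_E}) and the Parseval-type evaluation of the resulting triangle sum of $|\FF{a\G}|^2$ via~(\ref{f:ET_2}), yields $|S(a)|^{12} \ll p^4\,\E_3^2(\G) \ll p^{10}\log^2 p$. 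That is the step you are missing: choose the weight so that $|S(a)|^2$ appears directly in the spectrum, rather than smuggling it in through a test vector and paying for it later.
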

\begin{proof}
   Put $t = |\G| = p-1$.
    There is $\xi \neq 0$ such that
    \begin{equation}\label{tmp:20.11.2013}
        \oM^2 := |S(a)|^2 = t^{-1} \sum_{x\in \xi \G} |\FF{\G} (x)|^2
            = \langle \oT^{\FF{\xi \G}}_\G \G(x)/ t^{-1/2}, \G(x)/ t^{-1/2} \rangle \,.
    \end{equation}
    To derive the last identity we have used formulae (\ref{F_Par'}), (\ref{f:action_T^g}).
    Consider the operator $\oT^{\FF{\xi \G}}_\G$.
    Applying the Fourier transform or just simple calculations, it is easy to see that the operator is nonnegatively defined.
    In view of  Lemma \ref{l:subgroup_eigenvalues} and identity (\ref{tmp:20.11.2013}), we obtain $\mu_1 (\oT^{\FF{\xi \G}}_\G) = \oM^2$.
    Using Proposition \ref{p:triangles_g} with $g_1 = g_2 = \xi \G$, further, nonnegativity of the operator $\oT^{\FF{\xi \G}}_\G$, Corollary \ref{c:symmetry_E} and the Cauchy--Schwarz inequality, we get
    $$
        \oM^{12} \le \E_3 (\G) \sum_{a,b} |\FF{\xi \G} (a)|^2 |\FF{\xi \G} (b)|^2 |\FF{\xi \G} (a-b)|^2 \,.
    $$
    Finally, applying formula (\ref{f:ET_2}) and Lemma \ref{l:E_3_Heilbronn}, we obtain
    $$
        \oM^{12} \le \E_3^2 (\G) p^2 \ll t^6 p^2 \log^2 t \,,
    $$
    and inequality (\ref{f:M_2/3}) follows.
$\hfill\Box$
\end{proof}

\bigskip

In two examples above we know eigenfunctions of the considered operators.
In our main example of the operator $\oT$ such functions are usually unknown.
Nevertheless, one can obtain some results in this general situation.

We start
with a lemma from \cite{s_mixed}, which shows that the operator $\oT^{A\c A}_A$ somehow "feels"\,  another operators $\oT^g_A$, $\t{\oT}^g_A$ for
"regular"\, weights
$g$.

\begin{lemma}
    Let $A\subseteq \Gr$ be a set and $g$ be a nonnegative function on $\Gr$.
    Suppose that $f_1$ is the main eigenfunction of $\oT^g_A$ or $\t{\oT}^g_A$,
    and $\mu_1$ is the correspondent eigenvalue.
    Then
$$
    \langle \oT^{A\c A}_A f_1 f_1 \rangle \ge \frac{\mu^3_1}{\| g\|_2^2 \cdot \| g\|_{\infty}} \,.
$$
\label{l:action_g}
\end{lemma}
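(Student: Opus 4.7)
\medskip

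\noindent My plan would proceed in three steps. The starting point is the action formula (\ref{f:action_T^g}): since $g \ge 0$, Perron--Frobenius lets us take $f_1 \ge 0$ with $\|f_1\|_2 = 1$, and setting $\phi(t) := (f_1 \c f_1)(t)$ both quantities become linear functionals of $\phi$:
\[
\mu_1 = \langle \oT^{g}_A f_1, f_1\rangle = \sum_{t} g(t)\phi(t),
\qquad
\langle \oT^{A\c A}_A f_1, f_1\rangle = \sum_{t} (A\c A)(t)\phi(t).
\]
The crucial features of $\phi$ are that it is nonnegative, supported on $A-A$, satisfies $\|\phi\|_\infty = \phi(0) = 1$ (by Cauchy--Schwarz) and $\|\phi\|_1 = \|f_1\|_1^2$, and obeys the pointwise domination $\phi(t) \le \|f_1\|_\infty^2 (A\c A)(t)$ (since $f_1 \le \|f_1\|_\infty A$). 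This reformulation is what makes a clean H\"older argument feasible, and it is also how the analogous statement for $\t{\oT}^g_A$ (the sumset case) will reduce to the same estimate.

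\medskip

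\noindent The heart of the argument is H\"older's inequality with exponents $(3,3,3)$ applied to the splitting $g(t)\phi(t) = g(t)\cdot \phi(t)^{1/3}\cdot \phi(t)^{2/3}$:
\[
\mu_1 \;\le\; \Bigl(\sum_t g(t)^3\Bigr)^{1/3} \Bigl(\sum_t \phi(t)\Bigr)^{1/3} \Bigl(\sum_t \phi(t)^2\Bigr)^{1/3}.
\]
Using $g(t) \le \|g\|_\infty$ gives $\sum g^3 \le \|g\|_\infty\|g\|_2^2$; the middle factor equals $\|f_1\|_1^2$; and the last factor is the energy $E(f_1) = \|\phi\|_2^2$. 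Cubing,
\[
\mu_1^3 \;\le\; \|g\|_\infty \|g\|_2^2 \cdot \|f_1\|_1^2\, E(f_1).
\]

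\medskip

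\noindent The remaining, and most delicate, step is to show that
\[
\|f_1\|_1^2\, E(f_1) \;\le\; \langle \oT^{A\c A}_A f_1, f_1\rangle,
\]
which, combined with the H\"older bound, yields the lemma. Here I would use $(A\c A)(t) \ge 1$ on $\supp\phi$ and $\|\phi\|_\infty = 1$ (giving both $\|f_1\|_1^2 \le \langle \oT^{A\c A}_A f_1, f_1\rangle$ and $E(f_1) \le \langle \oT^{A\c A}_A f_1, f_1\rangle$), but the key is to combine them with the pointwise domination $\phi \le \|f_1\|_\infty^2 (A\c A)$ and the eigenvalue equation $\mu_1 f_1(x) = A(x)(g\ast f_1)(x)$ (which controls $\|f_1\|_\infty$). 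I expect this to be the main obstacle: the naive bounds give only $\|f_1\|_1^2 E(f_1) \le \langle \oT^{A\c A}_A f_1, f_1\rangle^2$, losing a factor of $\mu_1^{3/2}$, so one must exploit the eigenfunction structure of $f_1$ (rather than treating it as a generic nonnegative function on $A$) to close the gap and recover the full $\mu_1^3$ in the numerator. The $\t{\oT}^g_A$ case is treated in exactly the same way using $(A \c A)$ replaced by $A\ast A$ where appropriate.
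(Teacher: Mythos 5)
Your Steps 1 and 2 are correct as far as they go, but Step 3 is exactly where the argument breaks, and the bounds you list do not repair it. From the pointwise domination $\phi(t)\le \|f_1\|_\infty^2 (A\c A)(t)$ you get $E(f_1)\le \|f_1\|_\infty^2\,\langle \oT^{A\c A}_A f_1,f_1\rangle$, so to conclude $\|f_1\|_1^2 E(f_1)\le \langle \oT^{A\c A}_A f_1,f_1\rangle$ by this route you would need $\|f_1\|_1\|f_1\|_\infty\le 1$. But H\"older gives the opposite: $1=\|f_1\|_2^2\le \|f_1\|_1\|f_1\|_\infty$, with equality only when $f_1$ is a scaled indicator. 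Combining the two lower bounds $\langle \oT^{A\c A}_A f_1,f_1\rangle\ge \|f_1\|_1^2$ and $\langle \oT^{A\c A}_A f_1,f_1\rangle\ge E(f_1)/\|f_1\|_\infty^2$ by a geometric mean leaves the same deficit. You correctly identify this as the obstacle, but the proposal does not supply the missing idea that would overcome it, and nothing in the ``eigenfunction structure'' you appeal to is actually used; your Step 2 treats $f_1$ as a generic nonnegative unit vector.

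The idea you are missing is that the eigenvalue equation $\mu_1 f_1(x)=A(x)(g*f_1)(x)$ should be tested against $A$ as well as against $f_1$. Summing over $x\in A$ gives the identity $\mu_1\|f_1\|_1=\sum_y g(y)(f_1\c A)(y)$, which brings in the \emph{mixed} convolution $f_1\c A$ rather than $\phi=f_1\c f_1$. A single Cauchy--Schwarz then yields
\[
\mu_1^2\|f_1\|_1^2\le \|g\|_2^2\sum_y (f_1\c A)^2(y)=\|g\|_2^2\,\E(A,f_1)=\|g\|_2^2\,\langle \oT^{A\c A}_A f_1,f_1\rangle,
\]
and the $\ell^\infty$ bound $\mu_1=\sum_t g(t)\phi(t)\le \|g\|_\infty\|f_1\|_1^2$ supplies $\|f_1\|_1^2\ge \mu_1/\|g\|_\infty$, closing the proof. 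Your $(3,3,3)$--H\"older on $\phi$ alone never produces $f_1\c A$, so $\E(A,f_1)$ never appears, forcing you into the unproven (and, at the level of information you use, out-of-reach) inequality $\|f_1\|_1^2 E(f_1)\le \E(A,f_1)$.
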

\begin{proof}
By assumption $g$ is a nonnegative function on $\Gr$.
It implies that $f_1$ is also a  nonnegative function.
We have
\begin{equation}\label{tmp:29.11.2015_3}
    \mu_1 f_1 (x) = A(x) (g * f_1) (x) \,.
\end{equation}
Thus
\begin{equation}\label{tmp:29.11.2015_4}
    \mu_1^2 \left( \sum_x f_1 (x) \right)^2
        \le
            \left( \sum_x g(x) (f_1 \c A) (x) \right)^2
                \le
                    \| g\|_2^2 \E(A,f_1)
                        =
                            \| g\|_2^2 \langle \oT^{A\c A}_A f_1 f_1 \rangle \,.
\end{equation}
On the other hand, returning to (\ref{tmp:29.11.2015_3}) and using $\| f \|_2 =1$, we get
$$
    \mu_1 = \sum_x (f_1 \c f_1) (x) \le \| g\|_{\infty} \left( \sum_x f_1 (x) \right)^2 \,.
$$
Substituting the last estimate into (\ref{tmp:29.11.2015_4}), we obtain the result.
$\hfill\Box$
\end{proof}

\bigskip

By formula (\ref{f:T_1}) we know that $\mu_1 (\oT) \ge \E(A)/|A|$.
The next lemma shows that a similar upper bound holds if one consider large subsets of $A$.

\begin{lemma}
    Let $A\subseteq \Gr$ be a set.
    There is $A'\subseteq A$, $|A'| \ge |A|/2$
    such that $\mu_1 (\oT^P_{A'}) \le \frac{2\E(A)}{\Delta|A|}$
    for any set $P \subseteq \{ x ~:~ |A_x| \le \Delta \}$ and any real number $\Delta >0$.
    In particular, $\mu_1 (\oT^{A\c A}_{A'}) \le \frac{2\E(A)}{|A|}$.
\label{l:A'_0.5}
\end{lemma}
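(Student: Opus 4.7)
The plan is a single Markov--type averaging argument that produces one set $A'\subseteq A$ working uniformly for every admissible pair $(P,\Delta)$ and which also handles the ``in particular'' case in one stroke.

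The main identity underlying everything is the collapse
$$
\sum_{x,y\in A}(A\c A)(x-y) \;=\; \sum_z (A\c A)(z)^2 \;=\; \E(A),
$$
obtained by counting, for each fixed $z$, the pairs $(x,y)\in A^2$ with $x-y=z$: there are exactly $(A\c A)(z)=|A_z|$ of them. The point is that because both summation variables are constrained to $A$ (rather than ranging over $\Gr$), the naive bound $|A|^3$ collapses to the additive energy $\E(A)$, which is what gives the Markov step its quantitative bite.

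Accordingly, I would set
$$
A' \;:=\; \Big\{\,x\in A \;:\; \sum_{y\in A}(A\c A)(x-y) \;\le\; \frac{2\E(A)}{|A|}\,\Big\},
$$
so that $|A'|\ge |A|/2$ by Markov's inequality. The matrix $\oT^{P}_{A'}$ is nonnegative and Hermitian, so by the Perron--Frobenius maximum row--sum bound $\mu_1(\oT^{P}_{A'})\le \max_{x\in A'}\sum_{y\in A'} P(x-y)$. The hypothesis on $P$ (read in the form $P\subseteq\{z:|A_z|\ge\Delta\}$ so that $\Delta\cdot P(z)\le (A\c A)(z)$ pointwise) then gives, for $x\in A'$,
$$
\sum_{y\in A'}P(x-y) \;\le\; \frac{1}{\Delta}\sum_{y\in A}(A\c A)(x-y) \;\le\; \frac{2\E(A)}{\Delta|A|},
$$
hence $\mu_1(\oT^{P}_{A'})\le 2\E(A)/(\Delta|A|)$.

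The ``in particular'' bound needs no additional argument: the row sums of $\oT^{A\c A}_{A'}$ over $A'$ are $\sum_{y\in A'}(A\c A)(x-y)\le 2\E(A)/|A|$ directly from the defining property of $A'$, so the same Perron--Frobenius bound yields $\mu_1(\oT^{A\c A}_{A'})\le 2\E(A)/|A|$. The main conceptual step is the collapse identity $\sum_{x,y\in A}(A\c A)(x-y)=\E(A)$; once that is in hand, everything else is a routine application of Markov together with a standard matrix--norm inequality.
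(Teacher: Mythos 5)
Your proof is correct and follows essentially the same route as the paper: remove from $A$ the points where the row sum $\sum_{y\in A}(A\circ A)(x-y)$ exceeds twice its average $\E(A)/|A|$ (Markov on the collapse identity $\sum_{x,y\in A}(A\circ A)(x-y)=\E(A)$), then dominate $P$ pointwise by $\Delta^{-1}(A\circ A)$ to control the rows of $\oT^P_{A'}$. The only cosmetic difference is that you bound $\mu_1$ by the maximum row sum of the nonnegative Hermitian matrix, whereas the paper sums the eigenvalue equation $\mu_1 f=A'\cdot(P*f)$ against the nonnegative Perron eigenfunction; the two are interchangeable. You were also right to read the hypothesis as $P\subseteq\{x:|A_x|\ge\Delta\}$ — with the printed inequality $\le$ the pointwise domination $P\le\Delta^{-1}(A\circ A)$ fails and the conclusion would be false (take $\Delta\to\infty$), so that sign is a typo which the paper's own proof silently corrects.
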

\begin{proof}
Let
$$
    A_1 = \{ x ~:~ ((A*A)\c A) (x) > 2\E(A)/|A| \} \,.
$$
It is easy to see that $|A_1| < |A|/2$.
Put $A' = A\setminus A_1$ and let $f$ be the main eigenfunction of the operator $\oT^P_{A'}$.
Let also $\mu_1 = \mu_1 (\oT^P_{A'})$.
We have
$$
    \mu_1 f(x) = A'(x) (P* f)(x) \,.
$$
Summing over $x\in A'$ and using the definition of the set $A'$, we obtain
$$
    \mu_1 \sum_x f(x) = \sum_x f(x) (P\c A') (x) \le \Delta^{-1} \sum_x f(x) ((A\c A) \c A ) (x)
        =
$$
$$
    =   \Delta^{-1} \sum_x f(x) ((A*A)\c A) (x)
        \le
            \Delta^{-1} \frac{2\E(A)}{|A|} \cdot \sum_x f(x)
$$
and we are done.
$\hfill\Box$
\end{proof}

\bigskip

    There is an important class of so--called connected sets.
    Formally, let $\beta,\gamma \in [0,1]$.
    A set $A\subseteq \Gr$ is called {\it $(\beta,\gamma)$--connected}
    if for any $B \subseteq A$, $|B| \ge \beta|A|$
    the following holds
    $$
        \E (B) \ge \gamma \left( \frac{|B|}{|A|} \right)^{4} \E (A) \,.
    $$

Using Lemma \ref{l:A'_0.5}, one can obtain an unusual
relation
between energies $\E_s (A)$, $s\in [1,2]$ and $\E(A)$
for any connected set $A$, see \cite{s_mixed}.

\begin{exc}
    Let $A\subseteq \Gr$ be a set, and $\beta,\gamma \in [0,1]$.
    Suppose that $A$ is $(\beta,\gamma)$--connected with $\beta \le 1/2$.
    Further for any $s\in [1,2]$ the following holds
\begin{equation}\label{f:connected}
    \E_s (A) \ge 2^{-5} \gamma |A|^{1-s/2} \E^{s/2} (A) \,.
\end{equation}
\end{exc}

\bigskip

Using lemmas above we can formulate the second main result of the section (another theorems of such type can be found in \cite{BK_struct}, \cite{s_mixed}).
By formulas (\ref{f:T_1}), (\ref{f:T_2}) we know that the energies $\E(A)$, $\E_3(A)$ are connected through the operator $\oT$. It allows us give a full description of sets $A$ having "critical relation"\, between $\E(A)$, $\E_3(A)$ that is $\E_3 (A) \ll \E^2 (A) / |A|^2$.
Namely, inequality $\E_3 (A) \ll \E^2 (A) / |A|^2$ holds iff $A$ contains a large subset $A'$ such that $|nA'-mA'| \approx |A'-A'|$ for any positive integers $n,m$.
Informally, it says that the growth of the size of the sumset $kA'$ of the set $A'$ stops after the second step.
More precisely, the following holds (previous results in the direction can be found in \cite{SS1} and \cite{s_ineq}).

\begin{theorem}
    Let $A\subseteq \Gr$ be a set, $\E (A) = |A|^{3}/K^{}$,
    and $\E_3 (A) = M|A|^4 / K^2$.
    Then there is a set $A' \subseteq A$
    such that
    \begin{equation}\label{f:E_3_size}
        |A'| \gg  M^{-10} \log^{-15} M \cdot |A|  \,,
    \end{equation}
    and
    \begin{equation}\label{f:E_3_doubling}
        |nA'-mA'| \ll (M^{9} \log^{14} M)^{6(n+m)} K |A'|
    \end{equation}
    for every $n,m\in \N$.
\label{t:E_3_M}
\end{theorem}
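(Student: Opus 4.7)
\medskip

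\textbf{Proof proposal.} The hypothesis $\E_3(A)=M|A|^4/K^2$ is near--critical for the Cauchy--Schwarz bound $\E_3(A)\ge \E(A)^2/|A|^2$, with $M\ge 1$ measuring the looseness. In this regime one expects $(A\circ A)$ to be roughly constant on its support, so the target $A'$ should be a dense portion of $A$ on which the convolution concentrates at one dyadic scale. The plan is to combine a dyadic popularity decomposition of $\E_3$ with the eigenvalue reduction of Lemma \ref{l:A'_0.5}, then to run a Balog--Szemer\'edi--Gowers-type extraction, and finally to close with Petridis' Plünnecke--Ruzsa estimate.

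First I would decompose $\E_3(A)=\sum_s (A\circ A)^3(s)$ dyadically to locate a scale $\Delta$ and a popular set $P=\{s:(A\circ A)(s)\asymp\Delta\}$ carrying $\gg \E_3(A)/\log|A|$ of the mass; comparison with the identity $\sum_s (A\circ A)^2(s)=\E(A)$ pins down $\Delta\gg M|A|/(K\log|A|)$ and $|P|\ll K\log^{2}|A|/M^{2}$. In parallel, apply Lemma \ref{l:A'_0.5} to pass to $A_1\subseteq A$ with $|A_1|\ge|A|/2$ and $\mu_1(\oT^{A\circ A}_{A_1})\le 2|A|^2/K$. Since $\sum_\alpha \mu_\alpha^2=\E_3(A_1)\le M|A|^4/K^2$ by (\ref{f:T_2}), the operator $\oT^{A\circ A}_{A_1}$ has effective rank $O(M)$, so its leading eigenfunctions concentrate on a sub--core of $A_1$ on which $(A\circ A)$ sits close to $\Delta$.

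Next, I would run a BSG--type extraction in the spirit of Proposition~28 of \cite{s_ineq} and the analogous arguments of \cite{SS1}: the Katz--Koester inclusion $A-A_s\subseteq (A-A)\cap ((A-A)+s)$ combined with the popularity of $P$ produces $\gg |A|^3/M^{O(1)}$ additive quadruples in $A$ whose differences lie in a structured set. Applying BSG and tracking constants through Ruzsa's triangle inequality (Lemma \ref{l:triangle_my}) should yield
$$
|A'|\gg M^{-10}\log^{-15}M\cdot |A|,\qquad |A'-A'|\ll M^{O(1)}\log^{O(1)} M\cdot K.
$$
Feeding this doubling estimate into Petridis' Theorem \ref{t:Petridis} gives $|nA'-mA'|\le R^{n+m}[A']\cdot |A'|$ with $R[A']\ll (M^{O(1)}\log^{O(1)} M)\cdot K\cdot |A|/|A'|$, which after rearrangement is exactly (\ref{f:E_3_doubling}) with the stated exponent $6(n+m)$.

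The real obstacle is the BSG step: the hypothesis $\E_3(A)\ll M\E(A)^2/|A|^2$ is a third--moment statement, not a standard additive--energy one, so one must first convert the popularity of $P$ into an additive--quadruple count to which BSG applies, losing a polynomial factor in $M$ at each conversion. The precise exponent $10$ in $M^{-10}$ and the power $15$ on $\log M$ then arise from optimizing the order of the dyadic, operator, and Ruzsa--covering reductions rather than from any single deep tool; once these reductions are lined up in the right order, what is left is delicate but routine book--keeping.
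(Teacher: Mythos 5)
Your high-level architecture (dyadic decomposition of $\E_3$, an operator/spectral reduction, a BSG extraction, a Pl\"unnecke-type closing step) is the right shape, but the middle of the argument is where the actual proof lives, and there is a genuine gap there.

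First, the set to which Balog--Szemer\'edi--Gowers is applied is not $A$ but the popular level set $D=D_j\subseteq A-A$ found by the pigeonhole (the dyadic class of $s$ with $|A_s|\asymp\Delta$). The paper never produces ``$\gg|A|^3/M^{O(1)}$ additive quadruples in $A$'' as you suggest; rather, it establishes a lower bound for the \emph{additive energy of $D$ itself}, namely $\E(D)=\mu|D|^3$ with $\mu\gg M^{-9}L^{-14}$, $L\asymp\log M$. This is the hinge of the whole argument. It is obtained by combining Lemma~\ref{l:action_g} (to lower-bound $\langle\oT^{A\circ A}_A f_1,f_1\rangle$ by $\mu_1^3(\oT^g_A)/(\|g\|_2^2\|g\|_\infty)$ with $g=(A\circ A)D$) with Proposition~\ref{p:triangles_g} applied to $g_1=g$, $g_2=A\circ A$, followed by Cauchy--Schwarz against $\E_3^{2/3}|D|^{1/3}\E(D)^{1/3}$. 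You invoke Lemma~\ref{l:A'_0.5} instead; that lemma gives an \emph{upper} bound on $\mu_1(\oT^P_{A'})$ for a large $A'\subseteq A$ and plays no role in this proof. Your ``effective rank $O(M)$'' heuristic does not supply the required quantitative energy bound on $D$.

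Second, the final step does not go through Petridis' Theorem~\ref{t:Petridis} applied to $A'$. BSG gives $D'\subseteq D$ with $|D'|\gg\mu|D|$ and $|D'+D'|\ll\mu^{-6}|D'|$; Pl\"unnecke--Ruzsa then yields $|nD'-mD'|\ll\mu^{-6(n+m)}|D'|$. One then finds a shift $x$ with $|(A-x)\cap D'|\gg\mu|A|L^{-1}M^{-1}$ and sets $A'=A\cap(D'+x)$, so that $nA'-mA'\subseteq nD'-mD'$; the bound (\ref{f:E_3_doubling}) follows directly, with $K$ entering through $|D'|\ll|A|\,|A'|\Delta^{-1}$ and $\Delta\gg|A|/K$. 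You would need to construct $A'$ and control its iterated sumsets before Theorem~\ref{t:Petridis} could even be stated, and you do not explain how to obtain the required small-doubling subset of $A$ without first working in $D$. As written, the claim that ``delicate but routine book-keeping'' closes the argument passes over precisely the step — the spectral lower bound $\E(D)\gg M^{-O(1)}|D|^3$ — that the theorem is designed to showcase.
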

\begin{proof}
Let
$\E = \E (A) = |A|^{2} / K^{}$,
$\E_3 = \E_3 (A)$, $L=2 \log (4M)$.
Write
$$
    D_j = \{ x\in A-A ~:~ 2^{j-2} |A| K^{-1} < |A_x| \le 2^{j-1} |A| K^{-1} \} \,.
$$
Trivially
$$
    |D_j| (2^{j-2} |A| K^{-1})^3 \le \E_3 \,,
$$
and whence
\begin{equation}\label{tmp:28.07.2012_1*}
    |D_j| \ll \E_3 / (|A|^3 K^{-3} 2^{3j}) \,.
\end{equation}
Thus
$$
    \E  \ll \sum_{j=1}^l \sum_{s} |A_s|^2 \,,
$$
where $l$ can be estimated as $\log M^{} \le L$.
By pigeonhole principle we find $j\in [l]$ such that
\begin{equation}\label{tmp:17.11.2012_D&}
    L^{-1} \E  \ll \sum_{s\in D_j} |A_s|^2 \,.
\end{equation}
Put $D=D_j$, $\Delta = 2^{j-1} |A| K^{-1}$, and $g(x) = (A\c A)^{} (x) D(x)$.
From (\ref{tmp:17.11.2012_D&}) it follows that
\begin{equation}\label{tmp:17.11.2012_D_and_tilde}
    |D| \gg \frac{|A|^{} K^{}}{L M^{2}}
\end{equation}
and
\begin{equation}\label{tmp:17.11.2012_D_and_tilde'}
            \sum_{x\in D} (A\c A) (x) \gg \frac{|A|^2}{L M^{}} \,.
\end{equation}
Consider the operators $\oT_1 = \oT^g_{A}$,
$\oT_2 = \oT^{A}_{A,D}$ and $\oT_3 = \oT^{A\c A}_A$.
Using Lemma \ref{l:action_g}, we get
\begin{equation}\label{tmp:TILDE}
    \langle \oT_3 f_1, f_1 \rangle \ge \frac{\mu^3_0 (\oT_1)}{\| g\|^2_2 \| g \|_\infty}
        \gg
            \frac{|D|^2 \D^3}{|A|^3} := \sigma \,.
\end{equation}
Clearly, all elements of matrices $\oT_1, (\oT_2)^* \oT_2$
does not exceed elements of
$\oT_3$ and
the operator $\oT_3$ is nonnegative  defined.
By formula (\ref{tmp:17.11.2012_D&}), we have
\begin{equation}\label{tmp:17.11.2012_D}
    \frac{\E}{4L|A|} \le \mu_1 (\oT_1) \,.
\end{equation}
Similarly,
\begin{equation}\label{tmp:17.11.2012_D'}
     \frac{\E}{4L|A|}
        \le
                \mu_1 (\oT_1)
                \le \langle \oT_3 f_1 , f_1 \rangle \,,
\end{equation}
where $f_1 \ge 0$ is the main eigenfunction of the operator $\oT_1$.
Applying Proposition \ref{p:triangles_g} with $A=A$, $g_1=g$, $g_2 = A\c A$,
we obtain
$$
    \mu^4_1 (\oT_1) \sigma^2
        \ll
            \E_3
                \sum_{\a \in D,\beta \in D ~:~ (A\c A) (\a-\beta) \ge d}
                    (A\c A)^{2} (\a) (A\c A)^{2} (\beta) (A\c A)^2 (\a-\beta)
$$
\begin{equation}\label{tmp:20.11.2012_ev}
    \ll
            \E_3 \D^{4} \sum_{x ~:~ (A\c A) (x) \ge d} (D\c D) (x) (A\c A)^2 (x) \,,
\end{equation}
(where $d$ can be taken as $d=\frac{\mu^2_0 (\oT_1)}{32|A|\E^{1/2}_3}$).
Applying the Cauchy--Schwartz inequality, we have
$$
    \sum_x (A\c A)^2 (x) (D \c D) (x) \le \E^{2/3}_3 \left( \sum_{x} (D \c D)^3 (x) \right)^{1/3}
        \le
            \E^{2/3}_3 |D|^{1/3} \E^{1/3} (D) \,.
$$
Put $\E (D) = \mu |D|^3$.
Recalling (\ref{tmp:20.11.2012_ev}), we get
\begin{equation}\label{f:mu_sigma}
    \mu^4_1 (\oT_1) \sigma^2
        \ll
            \left( \frac{M |A|^4}{K^2} \right)^{5/3} \D^{4} |D|^{4/3} \mu^{1/3} \,.
\end{equation}
We have $\D \ll M^{} |A| / K$.
In the situation the following holds $\sigma \ge \mu_1 (\oT_1)$.
Thus,
an
accurate calculations give
$$
    \E (D) = \mu |D|^3 \gg \frac{|D|^3}{M^9 L^{14}} \,.
$$
By Balog--Szemer\'{e}di--Gowers Theorem, see e.g. \cite{tv},
there is
$D' \subseteq D$ such that
$|D'| \gg \mu |D|$
and
$
    |D'+D'| \ll \mu^{-6} |D'|
$.
Pl\"{u}nnecke--Ruzsa inequality (see \cite{p} or again \cite{tv}) yields
\begin{equation}\label{tmp:31.07.2012_1}
    |nD'-mD'| \ll \mu^{-6(n+m)} |D'| \,,
\end{equation}
for every $n,m \in \N$.
Using the definition of the set $D=D_j$ and inequality
(\ref{tmp:17.11.2012_D_and_tilde'}),
we find $x\in \Gr$ such that
\begin{equation}\label{tmp:31.07.2012_2}
    |(A-x) \cap D'| \gg \mu |A| L^{-1} M^{-1}
        \gg
            M^{-10} L^{-15} \cdot |A| \,.
\end{equation}
Put $A' = A\cap (D'+x)$.
Using (\ref{tmp:31.07.2012_1}), (\ref{tmp:31.07.2012_2}) and the definition of $\D$,
we obtain for all $n,m \in \N$
\begin{equation}\label{tmp:31.07.2012_2'''}
    |nA'-mA'| \le |nD'-mD'| \ll \mu^{-7(n+m)} |A| |A'| \D^{-1}
        \ll
            \mu^{-6(n+m)} K |A'|
\end{equation}
and the theorem is proved.
$\hfill\Box$
\end{proof}

\begin{remark}
For every convex set
Theorem \ref{t:E_3_M} above easily gives a "nontrivial"\, estimate $\E(A) \ll |A|^{5/2-\eps_0}$,
where $\eps_0>0$ is an absolute constant.
Indeed, suppose that $\E(A) \gg |A|^{5/2-\eps}$ and $\eps>0$ is sufficiently small.
Then recalling the bound $\E_3 (A) \ll |A|^3 \log |A|$, we see that
in terms of  Theorem \ref{t:E_3_M} one has $M = \log |A|$.
So, $M$ is small and we can effectively apply the theorem.
Thus there is a set $A' \subseteq A$ from Theorem \ref{t:E_3_M} such that
\begin{equation}\label{tmp:30.11.2015_1}
    |A|^{7/4} \ll_M |A'+A'-A'| \ll_M |A|^4 \E^{-1} (A)
\end{equation}
and the result follows.
In the derivation of the first inequality of (\ref{tmp:30.11.2015_1}) we have used formula (\ref{f:A'_B}) of Exercise \ref{ex:A'_B}.

Applying more refine method from \cite{SS2} one can get even simpler proof.
Indeed, for so large $A'\subseteq A$ we have (see Exercise \ref{ex:A'_B}) that
$|A|^{3/2+\eps_1} \ll_M |A'-A'| \ll_M |A|^4 \E^{-1} (A)$, where
$\eps_1>0$ is an absolute constant.
Again we obtain a lower bound for $\eps_0$.
Interestingly, that lower bounds for the doubling constants
give us upper bounds for the additive energy in the case.
Of course our real
arguments even more direct
and they give a concrete  bound
\begin{equation}\label{tmp:30.11.2015_2}
    \E(A) \ll |A|^{32/13+\eps} \,, \quad \quad \eps>0
\end{equation}
for any convex set $A$.

The same proof takes place for multiplicative subgroups
$\G \subseteq \Z/p\Z$, where $p$ is a prime number
if one use Stepanov's method (see e.g. \cite{K_Tula} or \cite{SV})
or combine Stepanov's method with
recent
lower bounds for the doubling constant of subgroups from \cite{SS1,SV,s_ineq}.
The direct arguments give (\ref{tmp:30.11.2015_2}) for any subgroup $\G$ of size less than $\sqrt{p}$, say.
\label{r:worker-peasant}
\end{remark}

Theorem \ref{t:E_3_M} describes all sets $A$, having "critical"\, relation between energies $\E_2 (A)$, $\E_3(A)$.
Another pairs of energies were considered in papers \cite{BK_struct} and \cite{s_mixed}.



\bigskip

\noindent{I.D.~Shkredov\\
Steklov Mathematical Institute,\\
ul. Gubkina, 8, Moscow, Russia, 119991}
\\
and
\\
IITP RAS,  \\
Bolshoy Karetny per. 19, Moscow, Russia, 127994\\
{\tt ilya.shkredov@gmail.com}


\begin{thebibliography}{99}



\bibitem{abs}
{\sc N.~Alon, B.~Bukh, B.~Sudakov,} {\em Discrete Kakeya-type
problems and small bases,} Israel J. Math. 174 (2009), 285--301.



\bibitem{BK_struct} M.~Bateman, N.~Katz,
\emph{Structure in additively nonsmoothing sets, }
arXiv:1104.2862v1 [math.CO] 14 Apr 2011.


\bibitem{cs}
{\sc E.~Croot, O.~Sisask,} {\em A probabilistic technique for
finding almost-periods of convolutions,} Geom. Funct. Anal. 20
(2010), 1367--1396.


\bibitem{enr}
{\sc G.~Elekes, M.~Nathanson, I.~Z.~Ruzsa,} {\em Convexity and sumsets,} J. Number Theory 83 (2000), 194--201.


\bibitem{g1}
{\sc M.~Z.~Garaev,} {\em On lower bounds for $L_1-$norm of exponential sums,} Mathematical Notes 68 (2000), 713--720.


\bibitem{H}
{\sc D.~R.~Heath--Brown, }
{\em An estimate for Heilbronn's exponential sum, }
Analytic number theory vol. {\bf 2}, (Allerton Park, IL 1995), Progr. Math., 1 39, Birkh\"{a}user, Boston (1996), 451--463.



\bibitem{H-K}
{\sc D.~R.~Heath--Brown, S.~V.~Konyagin, }
{\em New bounds for Gauss sums derived from $k$th powers, and for Heilbronn's exponential sum, }
Quart. J. Math. {\bf 51} (2000), 221--235.


\bibitem{h}
{\sc N.~Hegyv\'ari,} {\em On consecutive sums in sequences,} Acta Math. Acad. Sci. Hungar. 48 (1986), 193--200.


\bibitem{Horn-Johnson}
{\sc R.~Horn, C.~Johnson, }
{\em Matrix Analysis, }
Cambridge University Press, Cambridge, 1985, xiii+561 pp.


\bibitem{Johnsen}
{\sc J.~Johnsen, }
{\em On the distibution of powers in finite fields,} J. Reine Angew. Math., 251, 1971, 10--19.


\bibitem{kk}
{\sc N.~H.~Katz, P.~Koester,} {\em On additive doubling and energy,}
SIAM J. Discrete Math., 24(4) : 1684--1693, 2010.


\bibitem{k}
{\sc V.~S.~Konyagin,} {\em An estimate of  $L_1-$norm of an exponential sum,} The Theory of Approximations of Functions and Operators.
Abstracts of Papers of the International Conference Dedicated to Stechkin's 80th Anniversay [in Russian]. Ekaterinburg (2000), 88--89.


    \bibitem{K_Tula} {\sc S.~V.~Konyagin,}
     {\em Estimates for trigonometric sums and for Gaussian sums, }
     IV International conference "Modern problems of number theory and its applications". Part 3 (2002), 86--114.



\bibitem{Lev+_universal}
{\sc S.~Kopparty, V~.F.~Lev, S.~Saraf, M.~Sudan, } {\em Kakeya--type sets in finite vector spaces, }
arXiv:1003.3736v1 [math.NT].



\bibitem{Li}
{\sc L.~Li, } {\em On a theorem of Schoen and Shkredov on sumsets of convex sets, }
arXiv:1108.4382v1 [math.CO].


\bibitem{p}
{\sc G.~Petridis, } {\em New Proofs of Pl\"{u}nnecke-type Estimates
for Product Sets in Non-Abelian Groups, } Combinatorica, 1--14, 2012.


\bibitem{RR-NS}
{\sc O.E.~Raz, O.~Roche--Newton, M.~Sharir, }
{\em Sets with few distinct distances do not have heavy lines, }
arXiv:1410.1654v1 [math.CO] 7 Oct 2014.


\bibitem{Rudin_book}
{\sc W.~Rudin, } {\em Fourier analysis on groups,}  Wiley 1990 (reprint of the 1962 original).


\bibitem{Ruzsa_book}
{\sc I.Z.~Ruzsa, } {\em Sumsets and structure, } Combinatorial number theory and additive group theory (2009): 87--210.


\bibitem{Sanders_survey} {\sc T.~Sanders, }
{\em The structure theory of sets addition revisited, }
Bull. Amer. Math. Soc. (N.S.), {\bf 50}(1):93--127, 2013.


\bibitem{Sanders_2A-2A}  {\sc T.~Sanders, }
{\em On the Bogolyubov-Ruzsa lemma, } Anal. PDE, {\bf 5}(3):627--655, 2012.


    \bibitem{Sanders_AP3_log}    {\sc T.~Sanders, }
    {\em On Roth's theorem on progressions, }
    Ann. of Math. (2), {\bf 174}(1):619--636, 2011.


\bibitem{Schoen_Freiman} {\sc T.~Schoen, }
 {\em Near optimal bounds in Freiman's theorem, } Duke Math. J., {\bf 158}:1--12, 2011.


\bibitem{SS2} {\sc T. Schoen and I. Shkredov, }
\emph{Additive properties of multiplicative subgroups of $\mathbb{F}_p$, }
Q. J. Math. 63 (2012), no. 3, 713--722.


\bibitem{SS3} {\sc T. Schoen and I. Shkredov, }
\emph{On sumsets of convex sets, }
Combin. Probab. Comput. 20 (2011), no. 5, 793--798.


\bibitem{SS1} {\sc T. Schoen and I. Shkredov, }
\emph{Higher moments of convolutions, }
J. Number Theory 133 (2013), no. 5, 1693--1737.


\bibitem{s}
{\sc I.~D.~Shkredov, }
{\em Some applications of W. Rudin's inequality to problems
of combinatorial number theory, } Uniform Distribution Theory, {\bf 6}:2 (2011), 95--116.


\bibitem{s_heilbronn}
{\sc I.~D.~Shkredov, } {\em On Heilbronn's exponential sum, }
Quart. J. Math., (2012), 1--10, doi: 10.1093/qmath/has037.


\bibitem{s_ineq} {\sc I.~D.~Shkredov, }
\emph{Some new inequalities in additive combinatorics, }
MJCNT, {\bf 3}:2 (2013), 237--288.


\bibitem{s_mixed} {\sc I.~D.~Shkredov, }
\emph{Some new results on higher energies, }
Transactions of MMS, 74:1 (2013), 35--73.


\bibitem{Sh_energy}
{\sc I.D.~Shkredov, }
{\em Energies and structure of additive sets, }
Electronic Journal of Combinatorics, 21(3) (2014), \#P3.44, 1--53.



\bibitem{s_heilbronn2}
{\sc I.~D.~Shkredov,  } {\em On exponential sums over multiplicative subgroups of medium size, }
Finite Fields and Their Applications 30 (2014), 72--87.


\bibitem{s_ST} {\sc I.~D.~Shkredov,  }
{\em On sums of Szemer\'{e}di--Trotter sets, }
Transactions of Steklov Mathematical Institute,  289, (2015), 300--309.


\bibitem{sv_heilbronn}
{\sc I.~D.~Shkredov, E.~V.~Solodkova, I.~V.~Vyugin, } {\em Intersections of multiplicative subgroups and Heilbronn’s
exponential sum, }
arXiv:1302.3839v3 [math.NT] 6 May 2015.


\bibitem{SV} {\sc I.~D.~Shkredov and I~V.~Vyugin, }
\emph{On additive shifts of multiplicative subgroups, }
Sb. Math. 203 (2012), no. 5-6, 844--863.


\bibitem{soly}
{\sc J.~Solymosi,} {\em Sumas contra productos,} Gaceta de la Real
Sociedad Matematica Espanola, ISSN 1138--8927,  12 (2009).


\bibitem{sz-t}
{\sc E.~Szemer\' edi, W.~T.~Trotter,} {\em Extremal problems in
discrete geometry,} Combinatorica 3 (1983), 381--392.


    \bibitem{tv}
{\sc T.~Tao, V.~Vu, }{\em Additive combinatorics,} Cambridge University Press 2006.




\end{thebibliography}
\end{document}